\documentclass[10pt]{amsart}
\usepackage{etex}
\usepackage[french, english]{babel}
\usepackage{tikz}
\usetikzlibrary{matrix,arrows}
\usepackage[all,cmtip]{xy}
\usepackage{enumerate}
\usepackage{dsfont}
\usepackage{baskervald}
\usepackage{mathrsfs}
\usepackage{setspace}
\usepackage{longtable}
\usepackage{tabularx, tabu}
\usepackage{tikz}
\usepackage{amsfonts,amsthm,amsmath,amssymb,comment,stmaryrd,mathalfa,url,mathrsfs, yfonts}
\usepackage{graphicx}
\graphicspath{ {images/} }
\usepackage[section]{placeins}
\usepackage{microtype}
\usepackage{booktabs, calc, multirow}
\usepackage{array}
\usepackage[ocgcolorlinks,backref=page]{hyperref}
\hypersetup{citecolor=blue}
\usepackage{amscd}
\usepackage{makecell}
\usepackage{chngcntr}
\graphicspath{ {images/} }

\usepackage{breakcites}

\usetikzlibrary{external}
\tikzexternalize

\usepackage[final]{changes}

\definechangesauthor[name={h}, color={black}]{h}
\definechangesauthor[name={c}, color={red}]{c}

\def\AA{\mathbb{A}}
\def\CC{\mathbb{C}}

\def\QQ{\mathbb{Q}}
\def\TT{\mathbb{T}}

\def\ZZ{\mathbb{Z}}
\def \lra{\longrightarrow}

\def \a{\alpha}

\def \g {\gamma}

\def \W {\mathcal{W}}

\def \l {\lambda}
\def \L {\Lambda}

\def \S {\mathcal{S}}

\def \Si {\Sigma}
\def \v {v}
\def \ii {\iota}

\def \vv {\text{{\fontfamily{lmr}v\selectfont}}}
\def \d {\mathfrak{d}}
\def \OO {\mathcal{O}}

\def \B {\mathcal{B}}
\def \K {U}

\def \KKK {\Gamma}

\def \CX {\mathscr{X}}

\def \unn {\underline}
\def \ov{\overline}

\def \wh{\widehat}

\def \nn {\mathfrak{n}}

\def \k {\kappa}
\def \Norm {N_{F/\QQ}}
\def\gothp{\mathfrak{p}}
\def\ps{\mathfrak{p}}
\def\gothq{\mathfrak{q}}

\DeclareMathOperator{\Mat}{Mat}
\DeclareMathOperator{\val}{val}

\DeclareMathOperator{\Spa}{Spa}
\DeclareMathOperator{\NP}{NP}

\DeclareMathOperator{\Inv}{Inv}

\DeclareMathOperator{\disc}{Disc}

\DeclareMathOperator{\GL}{GL}

\DeclareMathOperator{\Hom}{Hom}

\DeclareMathOperator{\Res}{Res}

\theoremstyle{plain}
\newtheorem{thm}{Theorem}[subsection]
\newtheorem*{thm*}{Theorem}
\newtheorem{lem}[thm]{Lemma}
\newtheorem{cor}[thm]{Corollary}
\newtheorem{con}[thm]{Conjecture}
\newtheorem*{con*}{Conjecture}
\newtheorem{prop}[thm]{Proposition}
\newtheorem*{prop*}{Proposition}
\theoremstyle{definition}
\newtheorem{defn}[thm]{Definition}

\theoremstyle{definition}
\newtheorem{rmrk}[thm]{Remark}
\newtheorem*{rmrk*}{Remark}
\newtheorem{warn}[thm]{Warning}
 
\newtheorem*{exmp*}{Example}
\newtheorem{obs}[thm]{Observation}
\newtheorem*{obs*}{Observation}
\newtheorem{nota}[thm]{Notation}
\newtheorem*{nota*}{Notation}
\newtheorem{num}[thm]{\unskip}
\newtheorem{question}[subsection]{Question}
\setlength\LTleft{0 pt}
\setlength\LTright\fill

\usepackage{fullpage}
\setcounter{tocdepth}{1}

\title{Slopes of overconvergent Hilbert modular forms}

\author{Christopher Birkbeck} 

\begin{document}
	
	\maketitle

	\begin{abstract}We give an explicit description of the matrix associated to the $U_p$ operator acting on spaces of overconvergent Hilbert modular forms over totally real fields. Using this, we compute slopes for weights in the centre and near the boundary of  weight space for certain real quadratic fields. \added[id=h]{Near the boundary of weight space we see that the slopes do not appear to be given by finite unions of arithmetic progressions but instead can be produced by a simple recipe from which we make a  conjecture on the structure of slopes. We also prove a lower bound on the Newton polygon of the $U_p$.}

	\end{abstract}

	\tableofcontents
	
	\section*{\textbf{ Introduction}} 
	\counterwithout{thm}{subsection}
	
	The idea of modular forms living in $p$-adic families began with Serre \cite{sere} who considered $p$-adic limits of compatible families of $q$-expansions of modular forms.  \added[id=h]{After this, Dwork \cite{Dworkp} studied subspaces of $p$-adic modular forms defined by imposing certain growth conditions and showed that on these spaces the $U_p$ operator is compact, thus giving us  a way to study these spaces in much more detail. Then Katz, in \cite{kertz}, showed that the spaces studied by Serre and Dwork could be defined in more geometric context. With this reformulation, Katz defined the spaces of overconvergent modular forms as subspaces of $p$-adic modular forms with appropriate \added[id=c]{growth} conditions. Moreover he showed that these spaces of overconvergent modular forms \added[id=c]{were} preserved by the action of the $U_p$ operator.}

	\added[id=h]{Then Hida, in a series of papers in the 1980's, defined and studied  subspaces of $p$-adic modular forms, topologically generated by the $p$-ordinary  eigenforms (which means that their $U_p$ eigenvalue is a $p$-adic unit). These forms are in fact overconvergent and have many remarkable properties, for example, Hida showed that} for a weight $k \geq 3$, the space of $p$-ordinary forms has rank depending only on $k$ modulo $p-1$ (or $2$ for $p=2$) \added[id=c]{and he used this to construct $p$-adic families of modular forms.} \added[id=h]{This  was then extended by Coleman--Mazur and Buzzard \cite{cmeig, buzeig} to finite slope eigenforms (which means the $U_p$ eigenvalue is not 0), by constructing geometric objects which they called eigencurves. These are rigid analytic varieties which parametrize all such modular forms of a fixed level and their points correspond to systems of eigenvalues for Hecke operators on finite slope overconvergent modular forms. More generally, Buzzard defined eigenvarieties and  created a ``machine'', which can be used to construct} eigenvarieties by inputting  a weight space and some suitable Banach modules together with an action of a Hecke algebra. \added[id=c]{More recently, Urban \cite{urb} and Hansen \cite{Hansen}  have used overconvergent cohomology groups to construct eigenvarieties associated to a large class of reductive groups. }

	Studying the geometry of these eigenvarieties is an active area of research and has many number theoretical applications, in particular,  Pottharst and Xiao in \cite{PX} have recently reduced the parity conjecture of Selmer ranks for modular forms to a statement about the geometry of certain eigenvarieties. Now, in order to study the geometry of these eigenvarieties, one can instead study the behaviour of the $p$-adic valuation of the $U_p$ eigenvalues (called the  {\it slopes}). Our goal is to compute slopes in many cases and make a precise conjecture on their structure and what it indicates about the geometry of the Hilbert eigenvariety. Our method relies on  working with modular forms defined on a totally definite quaternion algebra over a real quadratic field, which via the Jacquet-Langlands correspondence is enough to deduce results about the Hilbert eigenvariety (cf. Subsection \ref{jlcor}).
	
	In the case of modular forms over $\QQ$ this question has received a lot of attention recently, with a focus on studying  slopes of overconvergent modular forms as they move in $p$-adic families. To make this more precise, consider the Iwasawa algebra $\L=\ZZ_p \llbracket\ZZ_p^\times \rrbracket$  and let $\W$ be the associated rigid analytic space, which is called {\it weight space}. \added[id=h]{Elements of $\W(\CC_p)$ are identified with continuous homomorphisms $\ZZ_p^\times \to \CC_p^\times$, which are called {\it weights}. If we write $\ZZ_p^\times \cong H \times (1+q\ZZ_p)$ where $H$ is the torsion subgroup and where $q=p$ if $p$ is odd and $q=4$ for $p=2$, then taking a primitive Dirichlet character $\psi$ modulo $p^t$ and the character $z^k$ of $\ZZ_p^\times$ sending $z \mapsto z^k$ for $k  \in \ZZ$, we get an element of  weight space given by $z^k \psi$. The weights of the form $z^k$ are called {\it algebraic} and weights of  the form $z^k \psi$ are called {\it locally algebraic}. If we now take $\g$ a fixed topological generator of $1+q\ZZ_p$, and let $w(\kappa)=\k(\g)-1$ for $\kappa$ a weight, then the algebraic weights $z^k$ are in the region of  weight space such that $\val_p(w(\k)) \geq 1$ (for $p$ odd) called the {\it centre}, and the locally algebraic weights $z^k \psi$, with $\psi$ of conductor $p^t$ for $t \geq 2$ are in the region such that $\val_p(w(\k)) < 1$}\footnote{\added[id=h]{In fact one can show that they lie in $\val_p(w(\k))\leq \frac{1}{p-1}$ cf. \cite[Lemma 1.6]{bergp}.}} (again for $p$ odd\footnote{For $p=2$ the centre is where $\val_2(-) \geq 3$ and the boundary where $\val_2(-) < 3.$}).\added[id=h]{ Such locally algebraic weights are said to be  {\it near the  boundary}.} The reason we make such a distinction is that the behaviour of the slopes of the $U_p$ operator acting on weight $\k$ modular forms depends on where in  weight space $\k$ lives, as we shall see later. Lastly, we note that $\W \cong \bigsqcup_{\chi} \W_\chi$ where the $\chi$ run over characters of $H$ and $\W_\chi$ is corresponding component of  weight space.

	Over $\QQ$, the behaviour of the slopes of $U_p$ was first studied in \cite{gomaz} where they conjectured that if $k_1,k_2$ are large enough with $k_1 \equiv k_2 \mod p^n(p-1)$ for $n \geq \a$ for some rational number $\a$, then the dimension of the space of modular forms of weight $k_1$ and slope $\a$ should be the same as that of weight $k_2$ and slope  $\a$. Following this, Buzzard, Calegari, Jacobs, Kilford and Roe (among others) computed slopes of modular forms for weights both in the centre and boundary of weight space. In particular, in \cite{qasl} Buzzard computed slopes in many cases and was able to make precise conjectures about their behaviour.   Very little is known about the slopes near in centre of weight space and the geometry of the eigenvariety is expected to be more complicated. Results about slopes in this case can be found in \cite{buzcal,ghost}. In particular, Bergdall-Pollack have constructed a `ghost series' which conjecturally explains much of the behaviour of the slopes both near in centre and near the boundary of weight space.
	
	Near the boundary Buzzard--Kilford,  Jacobs and Roe were among the first to give evidence that  the sequence of slopes appear as a union of arithmetic sequences with same common difference. This  then implies that over the boundary of  weight space the eigencurve  looks like a countable union of annuli. For $p=2,3$ and trivial tame level this was proven by Buzzard-Kilford and Roe in \cite[Therem B]{BK}, \cite[Theorem 1]{roe}. For more details on the precise conjectures and their implications, see \cite{slmf}.  More generally, the recent work of Liu-Wan-Xiao and Wan-Xiao-Zhang  in \cite[Theorem 1.3, Theorem 1.5]{eovb}, \cite[Theorem C]{slbd} have proven similar results by working with quaternion algebras and using Chenevier's results mentioned above. In particular, they have defined integral models for these spaces of modular forms, and then used these to show that \added[id=c]{over the boundary of weight space the eigenvariety associated to a totally definite quaternion algebra over $\QQ$ is the disjoint union of countably many rigid analytic spaces which are finite and flat over an annulus. Furthermore, they show that the slopes are given by a union of arithmetic progressions with same common difference.}

	For overconvergent modular forms over $\QQ$ we have the following conjecture (which can be found in \cite[Conjecture 1.2]{eovb}, \cite[Conjecture 1.9]{bergp}) for the behaviour of the Newton polygon and the slopes of $U_p$.
	
	\begin{con*}
		For $\k$ a weight, let $s_1(\kappa),s_2(\kappa),\dots$ denote the slopes of the Newton polygon of\/z $U_p$ acting on the spaces of overconvergent modular forms of weight $\kappa$ and fixed level. Let $\NP_{\k}(U_p)$ be the Newton polygon of\/ $\det(1-XU_p)$. Then there exists an $r>0$ depending only on the component $\W_\chi$ of  weight space containing $\kappa$, such that
		
		\begin{enumerate}[$(a)$]
			\item For $\k \in \W_\chi$, such that $0 < \val_p(w(\k)) < r$, $\NP_{\k}(U_p)$ depends only on $\val_p(w(\k))$. Moreover, for weights in this component, the indices of the break points of the Newton polygon are independent of $\k$.
			\item  The sequence \added[id=h]{$\left ( s_i(\kappa)/\val_p(w(\kappa))\right )$} is a finite  union of arithmetic progressions, which is independent of $\kappa$ for $0 < \val_p(w(\kappa))<r$.
			
			\item Assuming $(a)$ above, the sequence of slopes $s_i(\k)$ are given by \added[id=h]{\[\bigcup_{i=0}^\infty \left ( S_{seed}+i\cdot \frac{\mid H \mid}{2} \right ),\]} where $S_{seed}$ is a fixed finite set\footnote{Here the notation is such that if $S$ is a sequence of slopes and $i \in \ZZ$, then we let $S+i$ denote the set, where we add $i$ to each slope in $S$. }, which only depends on the number of cusps of $X_0(M)$ (with $M$ the tame level) and the classical slopes in weight $2$ at different components of  weight space.\footnote{This was shown to follow from $(a)$ by Bergdall--Pollack in \cite[Theorem B]{bergp}.}
		\end{enumerate}
		
	\end{con*}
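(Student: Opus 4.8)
The plan is to realize the $U_p$ operator, on a fixed component $\W_\chi$ of weight space, as an explicit infinite matrix $M(\kappa)$ whose entries lie in the ring $\OO(\W_\chi)$ of rigid analytic functions — this is precisely the explicit description of $U_p$ the paper sets up, transported (via Jacquet--Langlands) from the totally definite quaternion side, where an integral model of the coefficient modules is available. Writing $\det(1-XU_p)=\sum_{n\geq 0}c_n(\kappa)X^n$ with $c_n\in\OO(\W_\chi)$, the entire statement becomes a claim about $\val_p(c_n(\kappa))$ as $\kappa$ ranges over a near-boundary annulus $0<\val_p(w(\kappa))<r$, with $r$ allowed to depend on $\W_\chi$. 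The first step is to normalize: rescale rows and columns of $M(\kappa)$ by suitable powers of $p$ and of $w(\kappa)$ so that the result has entries which are integral on the annulus and whose ``boundary specialization'' (reduction modulo the maximal ideal of the relevant Iwasawa-type ring) is explicitly computable from the formula for $U_p$.

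Granting such integral estimates, one expects to show that for each $n$ there are rational constants $a_n,b_n$, \emph{independent of $\kappa$} in the annulus, with $\val_p(c_n(\kappa))=a_n+b_n\,\val_p(w(\kappa))$, where $b_n$ is the order of vanishing of $c_n$ along the boundary and $a_n$ is read off from the boundary specialization. Part $(a)$ then follows formally: $\NP_\kappa(U_p)$ is the lower convex hull of the points $(n,a_n+b_n\,\val_p(w(\kappa)))$, for $\val_p(w(\kappa))$ small this hull is controlled by the pairs $(n,b_n)$, its vertices sit at a fixed set of indices, and hence the polygon depends on $\kappa$ only through $\val_p(w(\kappa))$.

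The substantive content is part $(b)$: proving the rescaled sequence $\bigl(s_i(\kappa)/\val_p(w(\kappa))\bigr)$ is a finite union of arithmetic progressions. Following the quaternionic arguments of Liu--Wan--Xiao and Wan--Xiao--Zhang, the strategy is to identify the boundary characteristic series with that of a ``$t$-adic'' family and to compute its Newton polygon by an explicit combinatorial recipe — a ghost-type count of dimensions of classical subspaces — which manifestly yields arithmetic progressions whose common difference is $|H|/2$, the step forced by the Atkin--Lehner/twisting symmetry of $M(\kappa)$ on a single component. Once $(a)$ is available, $(c)$ is Bergdall--Pollack's deduction: the break indices being $\kappa$-independent, each $s_i(\kappa)$ is affine-linear in $\val_p(w(\kappa))$; evaluating at a classical weight-$2$ point (where the finite-slope spectrum below the classical bound is pinned down by the number of cusps of $X_0(M)$ and the weight-$2$ slopes at the various components) fixes the finite seed $S_{seed}$, and the shift-by-$|H|/2$ self-similarity of the $U_p$ matrix then propagates it, giving
\[\bigcup_{i\geq 0}\left(S_{seed}+i\cdot\frac{|H|}{2}\right).\]

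The main obstacle is the normalization-plus-combinatorics step: establishing uniform integrality of the rescaled $U_p$ matrix on the annulus \emph{together with} a combinatorial evaluation of the boundary Newton polygon that provably produces arithmetic progressions. Both are currently only carried out in favourable cases (e.g.\ $p=2,3$ with trivial tame level by Buzzard--Kilford and Roe, or through the Liu--Wan--Xiao machinery), and it is exactly the passage from the explicit $U_p$ matrix to a proof that the ghost-type count yields arithmetic progressions for general $p$ and level that is missing — which is why $(b)$, and with it the quantitative form of $(c)$, remains conjectural even over $\QQ$.
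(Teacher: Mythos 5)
This statement is not a theorem of the paper: it is the conjecture of Coleman--Mazur/Buzzard--Kilford type quoted from \cite{eovb} and \cite{bergp}, stated in the introduction purely as motivation, and the paper offers no proof of it (nor claims one). The only ingredients with proofs in the literature are the special cases $p=2,3$ with trivial tame level (Buzzard--Kilford, Roe), the quaternionic analogues of Liu--Wan--Xiao and Wan--Xiao--Zhang, and the implication $(a)\Rightarrow(c)$ due to Bergdall--Pollack. Your proposal is therefore not checkable against a proof in the paper, and, as you yourself concede in your final paragraph, it is not a proof: the central step --- uniform integrality of a normalized $U_p$ matrix on the annulus $0<\val_p(w(\k))<r$ together with a combinatorial evaluation of the boundary Newton polygon showing $\val_p(c_n(\k))=a_n+b_n\val_p(w(\k))$ with $\k$-independent $a_n,b_n$ and arithmetic-progression slopes --- is exactly the open content of parts $(a)$ and $(b)$, and asserting it ``granting such integral estimates'' is assuming the conjecture, not proving it.

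Two further points in your outline are genuinely problematic rather than merely incomplete. First, the transport via Jacquet--Langlands to a totally definite quaternion algebra over $\QQ$ does not capture the full space of overconvergent modular forms: as the paper itself notes in the discussion following the overconvergent Jacquet--Langlands theorem, over $\QQ$ (odd degree) one only gets a closed immersion of the quaternionic eigenvariety into the full eigencurve, so even the complete Liu--Wan--Xiao machinery on the definite side would not by itself yield the conjecture as stated for overconvergent modular forms of level $X_0(M)$. Second, the claim that the common difference $|H|/2$ is ``forced by Atkin--Lehner/twisting symmetry'' of the $U_p$ matrix is an unsupported assertion; in the Bergdall--Pollack framework this step is precisely what requires the $\k$-independence of the break indices from part $(a)$, so it cannot be invoked independently to bootstrap $(c)$. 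In short: correct identification of the known strategies and of where they stop, but no new argument that closes the gap, and the statement remains a conjecture both in the paper and in your write-up.
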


	Our goal here is to give computational evidence for a similar structure to the slopes of overconvergent Hilbert modular forms (in particular part $(c)$ above) and prove a lower bound for the Newton polygon of $U_p$. We compute explicit examples of sequences of slopes of the $U_p$ operator by using the Jacquet-Langlands correspondence. Throughout, we work with locally algebraic weights both in the centre of  weight space and near the boundary. The reason we only do this for locally algebraic weights is only for simplicity and these results can most certainly be extended to any weight.
	
	Our computations show that,  for $\k$ near the boundary of weight space (see Definition \ref{wtspdefn}), the slopes of classical Hilbert modular forms in weight $\kappa$ are generated analogously to part (c) above. In particular, our computations show that in some cases the slopes do not appear to be given as a finite union of arithmetic progressions. Moreover, the structure of the $U_p$ operator matrix in this case, suggests that the reason the slopes for modular forms over $\QQ$ are in arithmetic progression is due to the simpler nature of the $U_p$ operator in this case (specifically the rate at which the $p$-adic valuation of the entries increases as one goes down the rows  (cf. \ref{mult} )).
	
	\added[id=h]{Our methods also allow us to compute finite approximations $U_p(R,\k)$ to the infinite matrix of $U_p$ acting on overconvergent Hilbert modular forms of weight $\k$. In this case, since the $U_p$ operator is compact, one can prove there exists a function $f:\ZZ_{\geq 0} \to \ZZ_{\geq 0}$ (see Warning \ref{wow} for an explicit lower bound of this function) such that if the size of our approximation matrix is $R \times R$, then the first $f(R)$ smallest slopes of $U_p(R,\k)$ coincide with the first $f(R)$ smallest slopes of overconvergent Hilbert modular forms of weight $\k$. Unfortunately, the best bounds on $f$ that we have grow very slowly as $R$ increases; this means that, in practice, to prove that all of the approximated slopes we have computed are in fact slopes of overconvergent Hilbert modular forms (which we expect is the case), our $R$ needs to be much larger than we can currently compute with.}\footnote{For example, in some of our computations, we would need our approximation matrix to have  $R\sim 10^6$, although computations suggest that, in this case, we only need $R \sim 480$, but we cannot at this time prove this much stronger bound.}
	
	Our computations do however have much of the (conjectural) structure that one has over $\QQ$; meaning there is evidence that the overconvergent slopes can be `generated' by slopes appearing in the classical spaces of Hilbert modular forms of smallest classical weight (see Definition \ref{minwei}) analogous to what one sees over $\QQ$ (e.g. part (c) of the conjecture above). \added[id=h]{Furthermore, if $F$ is the totally real field over which our Hilbert modular forms are defined and $p=\prod_{\ps | p}\ps$ in $\OO_F$, then one not only has a $U_p$ operator, but also $U_{\ps}$ operators which satisfy $U_p=\prod_{\ps|p}U_\ps$.}
		
	\begin{nota*}
	\added[id=c]{For $V$ a compact operator we will let $\S(V)$ denote the set of pairs $(s,m)$ where $s$ represents the slope of an eigenvalue of $V$ and $m$ denotes the multiplicity with which it appears; we call such a pair an $sm$-pair. More generally, we call any subset of  $\QQ_{\geq 0} \times \ZZ_{\geq 1}$ a set of $sm$-pairs. Lastly, for $V \in \{U_p,U_{\gothp}\}$, we let $\S_{\k,r}(V)$ denote $\S(V)$ with $V$ acting in weight $(\k,r)$. }
	\end{nota*}	
	\added[id=c]{Note that if $V=U_{\ps}$ then this is not a compact operator on the full space of overconverget Hilbert modular forms, but one can restrict it to a subspace where it is compact, where again $\S_{\k,r}(V)$ makes sense. See \ref{cps} and \ref{ops} for more details.}	
		
	Now for $U_p$ or $U_\ps$  our computations suggest the following conjecture:

	\begin{con}\label{con1}
	\added[id=c]{	Let $[F:\QQ]=g$, $U$ be a sufficiently small level, $(\k,r)$ be a locally algebraic weight near the boundary. Then there exits a $T \in \ZZ_{\geq 0}$ and a finite set $B_{\k,r}(t,V)$ of $sm$-pairs for $t \in (\ZZ/T\ZZ)^g$  which only depend on which component $(\k,r)$ lies in, such that (after scaling the slopes of $V$ by $\val_p(w(\k))$)}
		\added[id=c]{\[\S_{\k,r}(V)=\bigcup_{t \in \ZZ_{\geq 0}^g} \left \{ B_{\k,r}(\ov{t},V) + l(t) \right \}\]} where:
		\begin{enumerate}
			\item  $\ov{t}$ is the (component-wise) reduction of\/ $t \mod T$.
			\item  For $t=(t_i)\in \ZZ_{ \geq 0}^g$,  $l(t)=\sum_{i=1}^g t_i$.
			\item  $\left \{ B_{\k,r}(\ov{t},V) + l(t) \right \}= \left \{ (a+l(t),b) : (a,b) \in  B_{\k,r}(\ov{t},V) \right \}.$ 
		\end{enumerate}
		
		 Moreover, on classical subspaces \[\S_{\k,r}(V \mid_{S_{\k,r}(U) })=\bigcup_{t \in S_{cl }} \left \{B_{\k,r}(\ov{t},V) + l(t) \right \}\]
		where $S_{cl}=\{t=(t_i) \in \ZZ^g | t_i \in [0,k_i-2]\}$ where $(k,r)$ is the algebraic part of $(\k,r)$ as defined in \ref{classweights}. (See Conjecture \ref{splitgencon} for an explicit description of the $T$ appearing above and \ref{con2} for a conjecture on what the $B_{\k,r}(\ov{t},V)$ are expected to be.).
	\end{con}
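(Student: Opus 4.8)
The plan is to attack Conjecture~\ref{con1} through the explicit matrix for $U_p$ (and for each $U_\ps$) produced in the body of the paper, rather than through any abstract interpolation argument. First I would fix an orthonormal basis $(e_{\aa})_{\aa \in \ZZ_{\geq 0}^g}$ of the Banach module of overconvergent Hilbert modular forms of weight $(\k,r)$, with the $g$ coordinates of $\aa$ matched to the $g$ weight components, chosen so that the entry of $U_\ps$ in row $\aa$ and column $\aa'$ has a transparent dependence on $\aa$, $\aa'$ and on $w(\k)$. The decisive input is a lower bound, uniform over the boundary region $0 < \val_p(w(\k)) < r$, for $\val_p$ of these entries in terms of the linear functional $l(\aa) = \sum_i a_i$: this is exactly the lower bound on $\NP_\k(U_p)$ established later (cf.\ Warning~\ref{wow} and~\ref{mult}), and it plays the role of the ``halo'' estimates of Liu--Wan--Xiao and Wan--Xiao--Zhang in the case $F = \QQ$.

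Granting such an estimate, one computes the Newton polygon of $\det(1 - X U_\ps)$ by the usual recipe for compact operators, but organising the basis by the value of $l(\aa)$. The multi-index bookkeeping should force the scaled matrix (divide slopes by $\val_p(w(\k))$) into a graded shape in which the block indexed by tuples with $l(\aa) = n+1$ is obtained from the block with $l(\aa) = n$ by translating all slopes by $1$, up to the lower-order corrections controlled by the estimate above. Passing to the limit $n \to \infty$ of these Newton polygons then yields the claimed decomposition $\S_{\k,r}(V) = \bigcup_{t \in \ZZ_{\geq 0}^g} \{B_{\k,r}(\ov t, V) + l(t)\}$, and the periodicity modulo $T$ of the seed sets $B_{\k,r}(\ov t, V)$ would be read off from the periodicity, in the residue class of $t$, of the local data at the primes above $p$ entering the entry formula --- which is where the explicit $T$ of Conjecture~\ref{splitgencon} is expected to come from, and similarly the content of~\ref{con2} for the $B_{\k,r}(\ov t, V)$ themselves.

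For the classical assertion, I would use that $S_{\k,r}(U)$ is spanned by those $e_{\aa}$ with $0 \le a_i \le k_i - 2$ for all $i$, so that $U_\ps|_{S_{\k,r}(U)}$ is the corresponding finite principal submatrix; the same grading by $l(\aa)$ then gives $\S_{\k,r}(V|_{S_{\k,r}(U)}) = \bigcup_{t \in S_{cl}} \{B_{\k,r}(\ov t, V) + l(t)\}$ with $S_{cl} = \prod_i [0, k_i - 2]$, exhibiting the classical slopes as the ``truncation'' of the overconvergent ones.

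The main obstacle --- and the reason the statement is stated as a conjecture --- is the passage from the lower bound on the entries to an actual identification of the Newton polygon. Over $\QQ$ this required proving that the Hodge-type bound is attained and that the off-diagonal entries do not perturb the block structure, which was achieved via explicit integral models; in the Hilbert setting the combinatorics of the multi-index $\aa$ together with the failure of each $U_\ps$ to be compact on the full space (cf.~\ref{cps}, \ref{ops}) make these error terms considerably harder to control, and I would not expect the block translation to be exact without a sharp two-sided estimate bounding $\val_p$ of the entries below by $l(\aa)$ and, simultaneously, pinning the diagonal from above. In the absence of such an estimate, the explicit computations carried out in the paper are offered as evidence; a complete proof would follow the route sketched above once the sharp estimate is in hand.
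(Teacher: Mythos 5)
The statement you are addressing is a conjecture, and the paper offers no proof of it: what the paper actually establishes is (i) the explicit entry formula for the $U_p$ matrix (Corollary \ref{cor714}), (ii) the valuation lower bound $D+b(x)+\cdots$ on its entries (Corollary \ref{cor619}), hence a Hodge-type lower bound on the Newton polygon (Proposition \ref{618}), and (iii) tables of computed slopes near the boundary which are the sole evidence for the decomposition $\S_{\k,r}(V)=\bigcup_t\{B_{\k,r}(\ov t,V)+l(t)\}$. Your proposal follows essentially the same route the author envisions (order the monomial basis by $l(\aa)$, use the growth of valuations in $l(\aa)$, read off mod-$T$ periodicity from the torsion of $\OO_p^\times$ acting through the character), and you correctly identify where it stops being a proof: the lower bound alone cannot show that the Newton polygon is \emph{attained} block by block, i.e.\ that the block of indices with $l(\aa)=n+1$ contributes exactly the slopes of the seed sets shifted by $1$. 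That step is the entire content of the conjecture, and nothing in your sketch (or in the paper) supplies it; over $\QQ$ the analogous identification required the integral models and sharp two-sided estimates of Liu--Wan--Xiao and Wan--Xiao--Zhang, for which no Hilbert analogue is provided here.

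Two smaller corrections. First, the uniform-in-weight estimate you invoke is not what Warning \ref{wow} contains: that warning only bounds how large a finite truncation $U_p(R,\k)$ must be for its small slopes to be genuine slopes of the compact operator; the valuation estimate you need is Corollary \ref{cor619}, and it is independent of $\val_p(w(\k))$, so the scaling of slopes by $\val_p(w(\k))$ and the claimed dependence only on the component of weight space are additional unproved inputs, not consequences of the bound. Second, for $V=U_\ps$ the set $\S_{\k,r}(V)$ only makes sense after restricting to the subspaces of \ref{cps} and \ref{ops} on which $U_\ps$ is compact, so your Newton-polygon argument would have to be run on those subspaces (or on $U_pU_\ps^n$ as the paper does computationally), not on the full space. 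With these caveats, your write-up is best read as a correct account of why the statement is a conjecture rather than as a proof of it.
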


	\begin{rmrk*}
		Our computations near the boundary, for a fixed field $F$ and prime $p$, are limited to only changing the algebraic part of the weight and not the finite part, which means  $\val_p(w(\k))$ (which is defined in \ref{wtspdefn}) is always fixed. The reason for this is that  changing $\val_p(w(\k))$ requires working with more ramified characters and levels, which translates into much larger matrices than we can currently work with.
	\end{rmrk*}

	In Sections \ref{tab}, \ref{centrewtsp}, we collect some computations of slopes for locally algebraic weights  near the boundary and in the  centre of  weight space (i.e. with trivial character). Near the boundary we compute slopes in the cases when our chosen prime $p$ is split or inert in our totally real field. Furthermore, in the split case we also compute slopes for the $U_{\ps_i}$ and observe similar behaviour to that of $U_p$. \added[id=h]{In all cases, we observe that the sets $B_\k(t,V)$ above appear to depend only on the multiset  of slopes  of $U_p$ or $U_{\ps_i}$ acting on  Hilbert modular forms of smallest classical weight, with the $g$-tuple $t$ controlling which component of  weight space $\k$ lies in.} \added[id=h]{See Conjecture \ref{con2} for the precise formulation.}
	
	We also compute slopes for weights in the centre and observe that there is much less structure and the slopes are not all integers. This contrasts with what one sees in the $\Gamma_0$-regular case over $\QQ$, which suggests that in the Hilbert case, the structure in the centre should be much more complicated.

	Lastly, we prove a generalization of \cite[Theorem 4.8]{slbd}. Specifically we prove a lower bound for the Newton polygon of  $U_p$ on overconvergent Hilbert modular forms over any totally  real  field of even degree $g$ (see Proposition \ref{618}). In the case of real quadratic fields, the result is as follows: let $S_{2}^D(\K)$ denote the space of modular forms on $D$ of weight $[2,2]$  {\it including} the space of elements that factor through the reduced norm map (\added[id=h]{these correspond to reduced norm forms},  see \cite[Definition 3.7]{dembele}). 
	
	\begin{prop*}
		Let $F$ be a real quadratic field and  $\K=U_{0}(\nn p^s)$ be a sufficiently small level (see \ref{neat}) , $h= \dim(S_{2}^D(\K))$ and let $(\kappa,r)$ be any locally algebraic weight. Then the Newton Polygon of the action of\/ $U_p$ (appropriately normalized) on overconvergent Hilbert modular forms (over $F$) of level $\K$ weight $(\kappa,r)$ lies above the polygon with vertices
		\[(0,0),(h,0),(3h,2h), \dots , \left( \frac{i(i+1)h}{2},\frac{(i-1)i(i+1)h}{3} \right ), \dots.\]
	\end{prop*}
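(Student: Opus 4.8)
The plan is to reduce, via the Jacquet--Langlands correspondence of Subsection~\ref{jlcor}, to the corresponding statement for overconvergent modular forms on the totally definite quaternion algebra $D$ over $F$, and then to read the bound off the explicit shape of the $U_p$ matrix constructed in the earlier sections. As a Banach module (ignoring the Hecke action), the space of overconvergent quaternionic forms of weight $(\kappa,r)$ and level $\K$ is noncanonically $S\otimes\OO\langle x_{1},\dots,x_{g}\rangle$, where $S=S_{2}^{D}(\K)$ is the weight-$[2,\dots,2]$ space -- functions on the relevant finite double-coset set, of dimension $h$ (this is where the factor $h$, \emph{including} the reduced norm forms, enters; neatness of $\K$ is what guarantees this clean description). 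Fix the orthonormalizable basis $\{x_{1}^{n_{1}}\cdots x_{g}^{n_{g}}\otimes b_{\ell}\}$ indexed by $(\vec n,\ell)\in\ZZ_{\geq 0}^{g}\times\{1,\dots,h\}$ and order it so that monomials of smaller total degree $|\vec n|=n_{1}+\dots+n_{g}$ come first; the block of basis vectors with $|\vec n|=m$ then has size $\binom{m+g-1}{g-1}h$, which for a real quadratic field is $(m+1)h$.

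The crucial input is the valuation behaviour quantified in \ref{mult}: once $U_p$ is appropriately normalized (the normalization depending only on the component of weight space, i.e.\ on $\val_p(w(\kappa))$), its matrix has coefficients in the valuation ring of $\CC_p$ and every entry in the row indexed by $(\vec n,\ell)$ has $p$-adic valuation at least $|\vec n|$. Concretely, applying the normalized operator to a monomial $x_{1}^{n_{1}}\cdots x_{g}^{n_{g}}$ -- a substitution followed by a trace over $p$, which produces binomial coefficients and powers of the relevant uniformizers -- lands in $p^{|\vec n|}\cdot\big(S\otimes\OO\langle x_{1},\dots,x_{g}\rangle\big)$; in the split case this is transparent from $U_p=\prod_{\ps\mid p}U_{\ps}$ with $U_{\ps}$ acting through the corresponding variable, while the inert and ramified cases require the explicit formula of the main theorem. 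Granting this, the row valuations form the non-decreasing step function $\lambda$ equal to $m$ on the block $|\vec n|=m$: namely $0$ with multiplicity $h$, then $1$ with multiplicity $2h$, then $2$ with multiplicity $3h$, and so on in the real quadratic case.

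From here the argument is the standard minor estimate for Fredholm determinants. Writing $\det(1-XU_p)=\sum_{N\geq 0}c_{N}X^{N}$ we have $c_{N}=\sum_{|I|=N}\det\big((U_p)_{I}\big)$, and expanding each $N\times N$ determinant as a signed sum of products of entries -- exactly one from each row indexed by $I$ -- gives $\val_p(c_{N})\geq\sum_{i\in I}\lambda_{i}\geq\sum_{i=1}^{N}\lambda_{i}$, using that $\lambda$ is non-decreasing and $|I|=N$. Since $\lambda$ is non-decreasing, $N\mapsto\sum_{i=1}^{N}\lambda_{i}$ is already convex and piecewise linear, with vertices at the cumulative block boundaries $N=\sum_{j=1}^{i}jh=\tfrac{i(i+1)h}{2}$ (for $g=2$), where its value is $\sum_{j=1}^{i}(j-1)\cdot jh=\tfrac{(i-1)i(i+1)h}{3}$. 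Hence the Newton polygon of $\det(1-XU_p)$, being the lower convex hull of the points $(N,\val_p(c_{N}))$, lies on or above the polygon through these vertices, which is the assertion; the general even-degree statement \ref{618} is proved the same way, with $(m+1)h$ replaced throughout by $\binom{m+g-1}{g-1}h$.

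I expect the one genuine difficulty to be the estimate $\val_p(u_{(\vec n,\ell),\,\bullet})\geq|\vec n|$ of the second step: fixing the correct normalization and then checking, uniformly over the splitting behaviour of $p$ in $F$ and over all locally algebraic weights $(\kappa,r)$, that the valuations of the entries of $U_p$ really grow by at least one for each unit of total monomial degree. Once this is in hand the remainder -- the Jacquet--Langlands reduction, the neatness hypothesis that pins down $S$ and the number $h$, and the minor estimate -- is routine.
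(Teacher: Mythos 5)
Your overall route is the paper's own: pass to the quaternionic model of Lemma \ref{lem75} (with the Jacquet--Langlands comparison of Subsection \ref{jlcor} in even degree), bound the valuations of the entries of the normalized $U_p$-matrix by the total degree of the monomial index, and convert this into a lower bound on the Newton polygon by a minor estimate. The paper packages that last step as ``the Hodge polygon lies below the Newton polygon'' in the proof of Proposition \ref{618}, which is exactly your principal-minor computation for the Fredholm coefficients, and your block sizes and vertices for $g=2$ agree with the Corollary that follows it.

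The one genuine problem is that the ``concrete'' form in which you propose to verify the key estimate is transposed, and in that form it is false. The normalized $U_p$ does \emph{not} send $x_1^{n_1}\cdots x_g^{n_g}$ into $p^{\,n_1+\cdots+n_g}\cdot\bigl(S\otimes\OO\langle x_1,\dots,x_g\rangle\bigr)$: by Proposition \ref{matstruc} the matrices $(\g_\beta u_\beta)_p$ occurring in the coset decomposition lie in $\left(\begin{smallmatrix} \pi\OO_p^\times & \OO_p\\ \pi^s\OO_p & \OO_p^\times \end{smallmatrix}\right)$, so their entries $b_i,d_i$ are in general units, and the constant coefficient of the image of a monomial of multidegree $\unn{l}$ is, after the normalization of Definition \ref{norma}, a unit multiple of $\psi(d)\prod_i b_i^{l_i}$ --- typically not divisible by $p$ at all. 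For the same reason the split-case shortcut ``transparent from $U_p=\prod_{\ps} U_\ps$ acting through the corresponding variable'' does not work: the $u_\a$ are lower triangular, but they are multiplied by elements $\g_\beta$ of the level group with unit upper-right entries, which destroys any divisibility indexed by the \emph{source} monomial. What is true --- and is what your headline ``row'' statement says once the row index is understood as the \emph{target} monomial, and is all that the minor estimate needs, since a principal minor uses each chosen index once as a row and once as a column --- is that the coefficient of $x_1^{m_1}\cdots x_g^{m_g}$ in the image of any basis vector has valuation at least $m_1+\cdots+m_g$. This is exactly what the paper establishes by the generating-function computation (Proposition \ref{prop:7.1.8}, Corollaries \ref{prop712}, \ref{cor714}, and \ref{cor619}, the factor $\prod_i\pi_i^{x_i}$ surviving the normalization of Definition \ref{norma}), and with the estimate read in that direction the rest of your argument goes through verbatim and coincides with the paper's. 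Two small corrections of references: the valuation input is Corollary \ref{cor619}, not Remark \ref{mult} (which concerns multiplicities), and the normalization of Definition \ref{norma} depends on the algebraic part $v$ of the weight rather than on $\val_p(w(\k))$ or the component of weight space.
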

	
	\begin{rmrk*}
		Note this this is simply the polygon with $h$ slopes $0$, $2h$ slopes $1$, $3h$ slopes $2$ and so on.
	\end{rmrk*}


	\subsection*{Acknowledgements} The author would like to thank his supervisor Lassina Demb\'el\'e for his support and guidance. He would also like to thank Fabrizio Andreatta, David Hansen, Alan Lauder and Vincent Pilloni for interesting discussions and very useful suggestions. Lastly, this work is part of the authors thesis so I wish to thank my examiners Kevin Buzzard and David Loeffler, as well as the referee for their very useful comments and corrections. 
	\counterwithin{thm}{subsection}

	\section{\bf Notation and setup}
	 
	\begin{nota}\label{nota}
		
		\begin{enumerate}[(a)]
			\item Let $F$ be a totally field with $[F:\QQ]=g$ and let $p$ be a rational prime which is unramified in $F$.
			
			\item Let $\OO_F$ denote the ring of integers of $F$. For each finite place $\vv$ of $F$ let $F_\vv$ denote the completion of $F$ with respect to $\vv$ and $\OO_\vv$ the ring of integers of $F_\vv$. For an integral ideal $\nn$, let $F_\nn= \prod_{\vv|\nn} F_\vv$ and similarly let $\OO_\nn=\prod_{\vv \mid \nn} \OO_{\vv}.$ In particular, if we have $p\OO_F=\prod_{i=1}^f \gothp_i$, then let $\OO_p=\oplus_i \OO_{\gothp_i}=\OO_F \otimes \ZZ_p$.
			\item  Let $D$ be a totally definite quaternion algebra over $F$ with the finite part of the discriminant (denoted $\disc(D)$) being trivial. Let \added[id=c]{$D_f^\times:=\Res_{\OO_F/\ZZ}(D^\times)(\AA_f)$} and $\OO_D$ denote a fixed maximal order. \added[id=h]{Lastly, we fix an isomorphism $\OO_D \otimes_{\OO_F} \OO_p \cong M_2(\OO_p)$, which  induces an isomorphism $D_p:=D \otimes_{F} F_p \cong M_2(F_p)$.}
			
			\item Let $\Sigma$ be the set of all places of $F$, $\Sigma_p$ be the set of all finite places above $p$ and $\Sigma_\infty \subset \Sigma$ the set of all infinite places of $F$.  By abuse of notation we will often make no distinction between a finite place of $F$ and the associated prime ideal of $F$, but instead, when we want to think of the places as prime ideals we denote them by $\gothp$.

			\item \added[id=h]{Let  $\psi: \OO_p^{\times} \to \OO_{\CC_p}^{\times}$ denote a finite \added[id=c]{order} character of conductor $p^s$ for some $s \in \ZZ_{ \geq 1}$.}
			
			\item Let $\nn$ be an ideal of $\OO_F$ which is coprime to $p$ and \added[id=c]{fix a splitting of $\OO_D$ at the primes dividing $\nn$}. Let \[\K_0(\nn p^s):= \left \{\g \in (\OO_D \otimes \wh{\ZZ})^\times \mid \g \equiv \left ( \begin{smallmatrix} *&*\\ 0&*  \end{smallmatrix} \right) \mod \nn p^s \right \}. \] 
			\item \added[id=h]{Let $\Delta_s$ denote} the monoid of $\g= (\g_i)_{i \in \Sigma_p}= \left ( \begin{smallmatrix} a_i&b_i\\ c_i&d_i  \end{smallmatrix} \right)_{i \in \Sigma_p} \in M_2(\OO_p)$ such that \added[id=h]{$\det(\g_i) \ne 0$, $\pi_{\ps_i}^s \mid c_i$ and $\pi_{\ps_i} \nmid d_i$, where $\pi_{\ps_i}$ are fixed uniformisers at $\ps_i$. We denote $\Delta_1$ simply by $\Delta$.}
			
			\item \added[id=c]{For each $\vv \in \Sigma_\infty$, we have a field embedding $\ii_\vv$ of $F$ into $\CC$ given by $\vv$ and let $\ov{\QQ}$ denote the algebraic closure of $\QQ$ inside $\CC$ and we fix an algebraic closure $\ov{\QQ}_p$ of $\QQ_p$. Let $\ii: \CC \overset{\sim}\to \ov{\QQ}_p$ and define $inc_p:=\ii \circ id: \ov{\QQ} \to \ov{\QQ}_p$.} 
			\item \added[id=c]{Let $L$ be a complete extension of $\QQ_p$ containing the image of $\psi$ and  containing the compositum of the images of $F$ under $\iota \circ \ii_\vv$, for $\vv \in \Sigma_{\infty}$.}
			\item For variables $\unn{X}=(X_i)_{i=1}^g$ and $\unn{l}=(l_i)_{i} \in \ZZ_{\geq 0}^g$, we let $\unn{X}^{\unn{l}}=\prod_i X_i^{l_i}$.
			\item \added[id=h]{If $a,b \in \ZZ^g$ we will write $a+b$ (resp. $a-b$) for the $g$-tuple whose entries are given by $a_i+b_i$ (resp. $a_i-b_i$). Similarly, if $a \in \ZZ^g$ and $n \in \ZZ$ then by $a > n$ we mean that $a_i >n$ for $i \in \{1,\dots,g\}$.} 
			\item \added[id=h]{We fix throughout a labelling of the places in $\Sigma_{\infty}$ by elements in $\{1,\dots,g\}$. In particular, we identify $\ZZ^{\Sigma_{\infty}}$ with $\ZZ^g$.}  		
		\end{enumerate}

	\end{nota}
	
	\begin{rmrk}
		For computational purposes we will work with $F=\QQ(\sqrt d)$ where $d=5,13,17$  since these are real quadratic fields for which there exists a totally definite quaternion algebra $D/F$ with trivial discriminant and class number one\footnote{In fact $d=2,5,13,17$ are the only such examples, see \cite{voight}.}.
	\end{rmrk}

	\section{\textbf{Overconvergent quaternionic modular forms}}
	\subsection{\textbf{ The weight space}}\label{wei}
	In this subsection we define weights of Hilbert modular forms and the weight space. We will also define the boundary and centre of weight space. We begin with the classical definition of a weight of a Hilbert modular form over $F$.

	\begin{defn}\label{cw}
		Let $n \in \ZZ_{\geq 0}^{g}$ and $v \in \ZZ^{g}$ such that $n+2v=(r,\dots, r)$ for some $r \in \ZZ$. By abuse of notation we denote $(r,\dots,r)$ by $r$ for $r \in \ZZ$. Set $k= n+2$ and $w=v+n+1$. It follows from the above that all the entries of $k$ have the same parity and $k=2w-r$. We call the pair $(k,r) \in \ZZ_{\geq 2}^{g} \times \ZZ$ a {\it classical algebraic weight}.   Note that given $k$ (with all entries paritious and  $ \geq 2$) and $r$ we can recover $n,v,w$. In what follows we will move between both descriptions when convenient.  \added[id=h]{We will call $(k,r,n,v,w)$ satisfying the above the {\it weight tuple} associated to the {\it weight} $(k,r)$.}
	\end{defn}

	\begin{nota}\label{weightnota}
		If we take $k>2$ paritious, then we fix a choice of $w,n,\v,r$ as follows: let $k_0=\max_i\{k_i\}$ then set $\v=\left (\frac{k_0-k_i}{2} \right)_i$, $n=k-2$, $n_0=k_0-2$, $r=n_0$ and $w=n+\v+1$. We will only use this convention when doing computations at the end. 
	\end{nota}

	\begin{defn}\label{wtspdf}\label{wtspstr}Let $\TT=\Res_{\OO_F/\ZZ} \mathbb{G}_m$. We define $\W^{G}$ to be the rigid analytic space over $L$ associated to the completed group algebra $\OO_L\llbracket\TT(\ZZ_p)\times \ZZ_p^\times\rrbracket$. We call $\W^{G}$ the {\it{weight space for}} $G$. 
	\end{defn}

	One can show that, as rigid analytic spaces, \[\W^{G} \cong H^{\vee} \times B(1,1)^{g+1} \cong \bigsqcup_{\chi \in H^\vee} \W_\chi\]  where $H$ is the torsion subgroup of $\TT(\ZZ_p)\times \ZZ_p^\times$, $H^\vee$ is the character group of $H$ and $B(1,1)$ is the open ball of radius 1 around 1. The $\W_{\chi}$ are called the {\it components} of $\W$ and it is not hard to see that $\W^G(\CC_p)=\Hom_{cts}(\TT(\ZZ_p)\times \ZZ_p^\times, \CC_p^\times)$.

	\begin{nota}
		Elements of $\W^{G}(\CC_p)$ will be given by $v:\TT(\ZZ_p) \to \CC_p^{\times}$ and $r: \ZZ_p^{\times} \to \CC_p^{\times}$. Setting $n=-2v+r$ (\added[id=h]{where we are abusing notation and letting $r$ denote the map on $\TT(\ZZ_p)$ defined by \added[id=c]{$r \circ \Norm$}}) and $\k=n+2$, we will denote these weights as $(\k,r)$ and call $(\k,r,n,v,w)$ a weight tuple if $\k,r,n,v,w$ satisfy the same relations as in \ref{cw}.
	\end{nota}

	\begin{nota}\label{tei}
		\added[id=c]{Let us fix an isomorphism $\a:\TT(\ZZ_p) \times \ZZ_p^\times \simeq H \times \ZZ_p^{g+1}$ with $H$ the torsion subgroup of  $\TT(\ZZ_p) \times \ZZ_p^\times$.  Let $\tau$ denote the finite character of $\TT(\ZZ_p) \times \ZZ_p^\times$ sending elements to their image in $H$ under $\a$.}
	\end{nota}
	
	\begin{defn}\label{classweights} 
		\added[id=h]{A weight is called {\it locally algebraic} if it is the product of an algebraic weight $(k,r) \in \ZZ^g \times \ZZ$ and a finite character $\psi$, which we denote by $(\k_{\psi},r)$ or simply $(\k,r)$. For $(\k,r)$ an  locally algebraic weight we denote its algebraic part by $(k,r)$}. Lastly, a locally algebraic $(\k_\psi,r)$ is called {\it classical} if its algebraic part is a classical algebraic weight, \added[id=c]{i.e, if the algebraic part is such that $k_i \geq 2$ for all $i$.}  
		
	\end{defn}
	
	\begin{nota}
		Later, when working with real quadratic fields, we will sometimes denote locally algebraic weights $(\k_\psi,r)$ simply by $[k_1,k_2]\psi$. We will usually let $\psi$ be a character of \added[id=h]{$\OO_F$} of conductor $p^s$, viewed as a character of $\TT(\ZZ_p)$ (via strong approximation).
		
	\end{nota}	
	
	Next we define what it means for a weight to be in the centre or near the boundary of  weight space. To do this, we begin by thinking of weight space as an adic space. In this setting, one defines (following \cite{AIP3}) $\W_{adic}=\Spa(\Lambda_F,\Lambda_F)^{an}$,\added[id=c]{where $\Lambda_F=\OO_L\llbracket \TT(\ZZ_p) \times \ZZ_p^\times \rrbracket \cong \Lambda_F^0[H]$ with $\Lambda_F^0 \cong \OO_L \llbracket T_1, \dots, T_{g+1} \rrbracket$ and $\a:\TT(\ZZ_p) \simeq H \times \ZZ_p^{g+1}$ the same fixed isomorphism as before.}\footnote{Note that here, for consistency, we are defining  weight space over $\OO_L$, but with  more care one can work over $\ZZ_p$ which is more customary when discussing integral models, see \cite[Section 2]{AIP3}, but we do not need this here.} To see what the boundary should be, we can restrict to the trivial component of  weight space, i.e., $\W^0=\Spa(\Lambda_F^0,\Lambda_F^0)^{an}$, where $\Lambda_F^0$ has the $(p,T_1,\dots,T_{g+1})$-adic topology (here $p$ is assumed unramified in $F$).
	
	\begin{defn}\label{adicweights}
		Now define a continuous map (cf. \cite[Proposition 3.3.5]{scholze}) $c: \W^0 \lra [0,\infty]^{g+1}$ by $$x \longmapsto \left ( \frac{\log|T_1(\tilde x)|}{\log |p(\tilde x)|},\dots, \frac{\log|T_{\added[id=c]{g+1}}(\tilde x)|}{\log |p(\tilde x)|} \right ),$$ where $\tilde x$ is the maximal generalization of $x$. Note that $\log|T_i(\tilde x)|$ and $\log|p(\tilde x)|$ take values in $[-\infty,0)$ since the $T_i$ and $p$ are topologically nilpotent. From this it follows that $c(x)=(0,\dots,0)$ if and only if $|p(\tilde x)|=0$. Moreover, we note that we cannot have $x$ such that only some of the entries of $c(x)$ are zero, i.e., we cannot have $c(x)=(0,x_2,\dots,x_g)$ with $x_i \neq 0$. With this set-up, being  {\it near boundary of weight space} (in this component) is the same as having a point $x \in \W^0$ with $c(x)$ close to zero.
	\end{defn}
	
	As an example of weights that are near the boundary, we can take a classical weight $(k_\psi,r)$ where $\psi$ is a character sufficiently ramified at {\it every} prime above $p$. Now a natural question is, what if we take $\psi$ a character only ramified at {\it some} of the primes above $p$? It is not clear to the author if these points should morally be near the boundary of weight space or in the centre. For this reason we define a quasi-boundary (which contains the boundary) as follows:
	
	\begin{defn}\label{wtspdefn}
		Let $\k=(\k_1,\k_2,\dots, \k_g)$ be a weight on $\TT(\ZZ_p) \cong H \times \ZZ_p^g$. Fox a fixed choice of $h \in H$, let $\g_i$ be a  topological generator of the $i$-th copy of $\ZZ_p$ under the isomorphism above. Then, define $w(\k)=(\k_i(\g_i)-1) \in \CC_p^g$. In this way we  obtain a coordinate in  weight space for each of our weights. We also set $\val_p(w(\k))= \min_{i} \{\val_p(\k_i(\g_i)-1)\}$ and say that for an odd prime $p$ (resp. $p=2$), a weight $\k$ is near the {\it quasi-boundary} if $\val_p(w(\k)) < 1$ (resp. $\val_2(w(\k)) < 3$), otherwise we say it is in the {\it centre}. Note that this does not depend on the choice of isomorphism $\TT(\ZZ_p) \cong H \times \ZZ_p^g$.
	\end{defn}

	\begin{nota}\label{usefulnota}
		\added[id=h]{In what follows we make use of the following standard notation (see \cite[Section 9]{buzeig}). We first note that there is a surjection $\a:\Sigma_{\infty} \to \Sigma_p$. Then for $(a_\ps)_{\ps \in \Sigma_p}$ we let $(a_\vv)_{\vv \in \Sigma_{\infty}}$ be the tuple where for $\vv \in \Sigma_{\infty}$, we let $a_\vv$ denote $a_\ps$ for any $\vv \in \a^{-1}(\ps)$. This will be particularly useful when working with elements in $\Delta$ (see Notation \ref{nota}) which are indexed by elements of $\Sigma_p$.}
	\end{nota}
	
	In this setting, the spaces of overconvergent quaternionic modular forms are defined as follows:

	\begin{defn}\label{ocdefn} Let $L \langle \unn{X} \rangle$ denote the space of convergent power series in the variables $X_i$ for $i \in \{1,\dots,g\}$. The space of overconvergent quaternionic modular forms of weight $(\kappa_\psi,r)$, level $U_0(\nn p^s)$  is defined as the vector space of functions $$f: D^\times \backslash D_f^\times  \lra L\langle \unn{X} \rangle$$ such that $f(dg)=f(g)$ for all $d \in D^\times$ and $f(gu^{-1})\cdot u_p=f(g)$ for all $u \in U_0(\nn p^s)$ and $g \in D_f^\times$. Here the action of $\g= \left(\begin{smallmatrix} a&b\\ c&d \end{smallmatrix} \right) = \left( \left(\begin{smallmatrix} a_j&b_j\\ c_j&d_j \end{smallmatrix} \right) \right)  \in \Delta$ on $ L\langle \unn{X} \rangle$ is given by $$\prod_i X_i^{l_i} \cdot \g=\psi(d) \prod_i H(\g_i,X_i,l_i)$$ where $$H(\g_i,Z,t)=(a_i d_i-b_ic_i)^{v_i} (c_iZ+d_i)^{n_i} \left( \frac{a_iZ+b_i}{c_iZ+d_i} \right)^t,$$ $\Delta$ and $(\k_{\psi},r,n,v,w)$ is a weight tuple. We denote this space by $S_{\kappa_\psi,r}^{D,\dagger}(U_0(\nn p^s))$.
	\end{defn}

\begin{rmrk}
\added[id=h]{	To define the classical spaces of quaternionic modular forms one can take a classical locally algebraic weight $(\k_\psi,r)$ with associated weight tuple $(\k,r,n,v,w)$  and define $V_{n}(\unn{X})$ to be the space of polynomials in $\unn{X}=(X_i)$ such that the degree of $X_i$ is less than or equal to $n_i$ for $i \in \{1,\dots,g\}$. Then space of classical forms is defined by replacing $L\langle \unn{X} \rangle $ with $V_{n}(\unn{X})$  in Definition \ref{ocdefn} and using the same action of $\Delta$. We denote the resulting spaces by $S_{\kappa_\psi,r}^{D}(U_0(\nn p^s))$ or simply $S_{\kappa_\psi,r}(U_0(\nn p^s))$. }
\end{rmrk}
	
	\begin{rmrk}
		We use a slightly different convention for weight $[2,2]$ modular forms on $D$. It is customary to define $S_{2}(\K)$ as a quotient $S_{{2}}(\K) / \Inv(\K)$, where $\Inv(\K)$ is a subspace of forms that factor through the reduced norm map (see \cite[Section 1]{hidaon}). But for our purposes we do not quotient out by $\Inv(\K)$, so in weight $[2,2]$ are slightly different to what is usually defined.
	\end{rmrk}

	\begin{rmrk}\label{rk71}
		In order for the space of modular forms of weight $(\k_\psi,r)$ to be  non-trivial, one requires that $\psi(x)=\Norm(x)^r$ for all $x \in \OO_F^\times$, which we view as embedded in $\OO_p^\times$ in the usual way.
	\end{rmrk}

	\begin{rmrk}
		The above definition corresponds to working with overconvergent modular forms with radius of overconvergence $p^0=1$. Working with a fixed radius is not a problem, as one can show that the characteristic polynomial of  $U_p$ does not depend on this radius (\cite[Proposition 11.1]{buzeig}).
		
	\end{rmrk}
	
	\subsection{Relation to overconvergent Hilbert modular forms}\label{jlcor}

	We now explain why it is enough to study slopes of overconvergent quaternionic modular forms. The key result for this is the overconvergent Jacquet-Langlands correspondence, which, in this setting says:

	\begin{thm} 
		Let $D/F$ be a totally definite quaternion algebra  of  discriminant $\mathfrak{d}$ defined over a totally real field $F$. Let $p$ be a rational (unramified) prime and $\nn$ an integral ideal of $F$  such that  $p \nmid \nn\d$ and $(\nn,\d)=1$. Let $\CX_D(\nn p)$  be the eigenvariety of level $\nn p$ attached to quaternionic modular forms on $D$. Similarly, let $\CX_{\GL_2}(\nn \d p)$ denote the eigenvariety associated to cuspidal Hilbert modular forms of level $\nn \d p$ (with the associated moduli problem for this level being representable)  as constructed in \cite{AIP}. Then there is a closed immersion $\iota_D:\CX_{D}(\nn p) \hookrightarrow \CX_{\GL_2}(\nn \d p)$ which interpolates the classical Jacquet-Langlands correspondence. Moreover, when $[F:\QQ]$ is even, one can choose $D$ with $\d=1$ so that the above is an isomorphism between the corresponding eigenvarieties.
	\end{thm}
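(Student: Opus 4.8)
The plan is to deduce the theorem from the \emph{classical} Jacquet--Langlands correspondence together with the interpolation (``uniqueness'') property of eigenvarieties. Recall, from Buzzard's eigenvariety machine \cite{buzeig} and Chenevier's interpolation theorem (cf.\ the quaternion-algebra picture over $\QQ$ in \cite{eovb,slbd}), that an eigenvariety over a fixed weight space is determined by the systems of Hecke eigenvalues occurring at a Zariski-dense accumulation set of classical points, and that a Hecke-equivariant, weight-compatible map from such a dense set inside one eigenvariety to another eigenvariety extends uniquely to a morphism of rigid analytic spaces. So the first step is to fix the relevant classical loci. On $\CX_D(\nn p)$ the classical cuspidal points --- cuspidal quaternionic eigenforms of classical algebraic weight and finite slope --- are Zariski-dense and accumulate at every point, by the classicality criterion for overconvergent quaternionic modular forms (a form whose slope is small relative to its weight is classical); the analogous statement for $\CX_{\GL_2}(\nn\d p)$ is the classicality theorem of \cite{AIP}, which is also where the representability hypothesis on the level $\nn\d p$ is used.

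Next I would run the classical correspondence pointwise. Given a classical cuspidal eigenform on $D$ of level $\nn p$ and weight $(\k,r)$, classical Jacquet--Langlands produces a classical cuspidal Hilbert eigenform of level $\nn\d p$, of the same weight, special (Steinberg) at each prime dividing $\d$, and with matching Hecke eigenvalues at every prime away from $\d$ --- in particular matching $U_\ps$-eigenvalues for $\ps\mid p$, since $D$ is split at $p$. This defines a Hecke-equivariant, weight-preserving injection of the classical locus of $\CX_D(\nn p)$ into $\CX_{\GL_2}(\nn\d p)$, which by the interpolation theorem extends to the desired morphism $\iota_D\colon \CX_D(\nn p)\to \CX_{\GL_2}(\nn\d p)$ over $\W^{G}$, interpolating classical JL. That $\iota_D$ is a \emph{closed} immersion follows because the abstract Hecke algebra cutting out $\CX_D(\nn p)$ is topologically generated by $\{T_\ell : \ell\nmid \nn\d p\}$ together with the $U_\ps$ for $\ps\mid p$, and each such operator already occurs in the Hecke algebra defining $\CX_{\GL_2}(\nn\d p)$, so the comorphism $\iota_D^{\ast}$ is surjective on structure sheaves. (Equivalently, $\iota_D$ is partially proper and quasi-finite over weight space, hence finite onto its image, and injective on points since the eigenvalues away from $\d$ separate points on both eigenvarieties.)

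For the last assertion, when $g=[F:\QQ]$ is even we choose $D$ ramified exactly at the $g$ infinite places --- possible precisely because the ramification set of a quaternion algebra has even cardinality --- so that $\d=1$ and both eigenvarieties have level $\nn p$. With $\d=1$ classical Jacquet--Langlands becomes a bijection between classical cuspidal Hilbert eigenforms and classical cuspidal quaternionic eigenforms respecting all Hecke and $U_\ps$-eigenvalues, so the Zariski-dense classical loci of the two eigenvarieties correspond bijectively and Hecke-equivariantly; hence $\iota_D$ is a closed immersion with Zariski-dense image, therefore surjective, and since $\CX_{\GL_2}(\nn p)$ is reduced this forces $\iota_D$ to be an isomorphism. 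The main obstacle is not the formal globalization but marshalling the prerequisites cleanly: one needs the classicality theorems, with the correct accumulation properties, on both the quaternionic and the Hilbert sides, together with a careful comparison of local components at the primes dividing $\d$ and $p$ to certify that the classical correspondence is exactly equivariant for the operators that generate the two eigenvarieties --- and, on the quaternionic side, one must first pass to the cuspidal part, since the spaces in Definition~\ref{ocdefn} still contain the reduced-norm forms.
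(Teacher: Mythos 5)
The paper does not actually argue this statement in-house: its ``proof'' is a one-line citation to \cite[Theorem 5.11]{me2}, the author's separate paper on the overconvergent Jacquet--Langlands correspondence. Your outline is essentially the strategy of that reference (and of Chenevier's original argument over $\QQ$): transfer the classical correspondence pointwise on a Zariski-dense, accumulating set of small-slope classical points on both sides, and invoke a Chenevier/Hansen-type interpolation theorem over the common weight space to globalize; your closing remark that one must first discard the reduced-norm forms on the quaternionic side is also a genuine step there. So in spirit you are on the same route as the cited proof.

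Two of your justifications, however, are weaker than what is needed and than what the interpolation machinery actually delivers. First, your argument that $\iota_D$ is a \emph{closed} immersion does not work as stated: surjectivity of the map of abstract Hecke algebras does not give surjectivity of $\iota_D^{\ast}$ on structure sheaves, and the parenthetical alternative (``quasi-finite, finite onto its image, injective on points'') does not imply a closed immersion of rigid spaces --- injectivity on points says nothing about the infinitesimal level. The correct move is to quote the interpolation theorem itself, whose conclusion, under containment of the classical eigensystem data and compatibility of the Hecke algebras, \emph{is} a closed immersion; one should not try to re-derive closedness by hand. Second, your final step deduces the isomorphism from ``closed immersion with Zariski-dense image into a reduced target,'' but reducedness of the cuspidal Hilbert eigenvariety of \cite{AIP} is a nontrivial input that you neither prove nor cite, and it is not established in the present paper. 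The way to conclude without it is to use that, when $\d=1$, the classical eigensystem data on the two sides coincide, apply the interpolation theorem in both directions, and compare the resulting closed immersions on a Zariski-dense set of classical points; as written, your argument has a gap at exactly this point.
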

	
	\begin{proof}
		See \cite[Theorem 5.11]{me2}.
	\end{proof}
	
	Therefore, if we are interested in the geometry of $\CX_{\GL_2}(\nn \d p)$, for $F$ of even degree, then it is enough to study the slopes of overconvergent quaternionic modular forms. In general for $[F:\QQ]$ odd, one obtains a closed immersion from the quaternionic eigenvariety into the full Hilbert eigenvariety, so one can only study ``parts" of the full Hilbert eigenvariety, similar to the situation over $\QQ$ (cf. \cite{eovb}).
	
	\section{\textbf{The $U_p$ operator}}
	
	In this section we describe how to compute the $U_p$ operator matrix and prove a lower bound on its Newton polygon. The algorithms used to compute $U_p$ are very much inspired by \cite{dem,jacobs}. We note here that the results in this section apply to any totally definite quaternion algebra $D/F$ of class number one.  The fact that we work with a quaternion algebra that has class number one is simply to ease the exposition and computations, and one can most certainly work over any number field of even degree (or maybe even any degree) by adapting the work of Demb\'el\'e--Voight \cite{dembele}, but at the cost of increasing the computational complexity.
	\subsection{Explicit formulas for $U_p$}
	First note that, since $D$ is totally definite, then $D^\times \backslash D_f^\times / U$ is simply a finite number of points, which we call the {\it class number} of $(D,U)$ for $\K \subset \added[id=c]{\wh{\OO}_D^\times}$ an open compact subgroup. Moreover, since $D$ has class number one, then $D_f^\times= D^\times \wh{\OO}_D^\times$  and $D^\times \backslash D_f^\times = \OO_D^\times \backslash \wh{\OO}_D^\times$. Thus there is a bijection  $$D^\times \backslash D_f^\times / U \lra \OO_D^\times \backslash \wh{\OO}_D^\times/ U$$ and we can write $\wh{\OO}_D^\times =\coprod_{i=1}^{h} \OO_D^\times t_i U$ for $t_i$ suitable representatives. In what follows we will use the above decomposition to write an element $x \in D_f^\times$ as $du$ where $d \in D^\times, u \in \wh{\OO}_D^\times$. Setting $U=U_0(\nn p^s)$, we can use the bijection to  write $u=d't_i\g$ (for some $i$) where $d' \in \OO_D^\times$ and $\g \in U_0(\nn p^s)$. Now, following Demb\'el\'e \cite{dem}, we find the $t_i$ by observing that $$\OO_D^\times \backslash \wh{\OO}_D^\times/ U = \OO_D^\times \backslash \mathbb{P}^1(\OO_F/ \nn p^s)$$ where \added[id=c]{$\mathbb{P}^1(\OO_F/ \nn p^s)= \left \{ (a,b) \in (\OO_F/ \nn p^s)^2 \mid  \exists (\a,\beta) \in (\OO_F/ \nn p^s)^2 \text{ such that } \a a- \beta b =1 \right \}/ (\OO_F/ \nn p^s)^\times .$} We note that $$ \mathbb{P}^1(\OO_F/ \nn p^s) = \prod_{\gothq| \nn p^s} \mathbb{P}^1(\OO_F/\gothq^{e_\gothq}).$$ From this we can find the $t_i$ by simply picking a representative $(a,b)=(a_\gothq,b_\gothq)_{\gothq| \nn p^s} \in  \mathbb{P}^1(\OO_F/ \nn p^s) $ for each $\OO_D^\times$-orbit and then lifting this  to the element of $\wh{\OO}_D^\times$ which is $1$ at all places not diving $\nn p$ and, at the places dividing the level, we take $(\a_\gothq, \beta_\gothq) \in \left (\OO_F/\gothq^{e_\gothq}\right)^2$ such that  $a_\gothq \a_\gothq-b_\gothq \beta_\gothq =1$ Finally, we set $(t_i)_\gothq =  \left(\begin{smallmatrix} a_\gothq &b_\gothq\\ \beta_\gothq& \a_\gothq \end{smallmatrix} \right)$.

	\begin{nota}\label{neato}
		Let $\K \subset D_f^\times$ be an open compact subgroup and (by strong approximation) let $D_f^\times=\coprod_i D^\times t_i U$, where  $t_i \in D_f^\times$. For each $i$  set \[\Gamma^i(\K)=D^\times \cap t_i \K t_i^{-1},\]
		and note that \added[id=c]{$\overline{\Gamma^i(U)} = \Gamma^i(U)/(F^\times \cap \Gamma^i(\K))$} is {\it finite} by \cite[Lemma 7.1]{hidaon}. 
	\end{nota}
	
	\begin{defn}\label{neat} We say that $\K$ \textit{{sufficiently small}} if $\overline{\Gamma^i(\K)}$ is trivial for all $i$.
	\end{defn}

	\begin{defn}
		\added[id=h]{We say an open compact subgroup $U \in D_f^\times$ has {\it wild level} $\geq \pi^s$ if the projection $U \to D_p^\times$ is contained in $\Delta_s$. See \ref{nota} for the definition of $\Delta_s$ and $D_p$.}
	\end{defn}

	\begin{lem}\label{lem75}
		
		There is  an isomorphism
		
		\begin{equation} S_{\kappa_\psi,r}^{D,\dagger}(U) \overset{\sim}{\lra}\bigoplus_{i=1}^h L\langle \unn{X}	 \rangle^{\Gamma^i(U)} \label{triforce} \end{equation} given by sending\/ $f$ to\/ $(f(t_i))_i$.
		
	\end{lem}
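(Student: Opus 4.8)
The argument is the standard ``evaluate at coset representatives'' one for spaces of algebraic modular forms (cf.\ \cite{hidaon,buzeig}); the only thing needing a little care is the bookkeeping around the $t_i$'s. Using the fixed decomposition $D_f^\times=\coprod_{i=1}^h D^\times t_i U$, I would first record that the defining conditions on $f$ are equivalent to $f(dg)=f(g)$ for $d\in D^\times$ together with $f(gu)=f(g)\cdot u_p$ for $u\in U$ — the latter being $f(gu^{-1})\cdot u_p=f(g)$ with $u$ replaced by $u^{-1}$, which is legitimate since $U$ is a group, so $u_p$ and $u_p^{-1}$ both lie in $\Delta_s$ and act inversely on $L\langle\unn{X}\rangle$. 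Consequently $f(dt_iu)=f(t_iu)=f(t_i)\cdot u_p$ for all $d\in D^\times$, $u\in U$, so $f$ is determined by $(f(t_i))_i$; in particular the map $f\mapsto(f(t_i))_i$ is $L$-linear and injective, and the content is to identify its image with $\bigoplus_i L\langle\unn{X}\rangle^{\Gamma^i(U)}$. Here one must be careful that $\Gamma^i(U)$ is made to act on $L\langle\unn{X}\rangle$ through $\gamma\mapsto(t_i^{-1}\gamma t_i)_p$: for $\gamma\in\Gamma^i(U)$ we have $t_i^{-1}\gamma t_i\in U$, so its $p$-component lies in $\Delta_s$ and the weight action of Definition~\ref{ocdefn} applies, even though $\gamma_p$ itself need not lie in $\Delta$.

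Next I would check $f(t_i)\in L\langle\unn{X}\rangle^{\Gamma^i(U)}$. Given $\gamma\in\Gamma^i(U)$ write $\gamma=t_iut_i^{-1}$ with $u\in U$, so $\gamma t_i=t_iu$, and then
\[ f(t_i)=f(\gamma t_i)=f(t_iu)=f(t_i)\cdot u_p, \]
using left $D^\times$-invariance for the first equality and the transformation law for the last. Since $u_p=(t_i^{-1}\gamma t_i)_p$ is by definition the operator through which $\gamma$ acts, this says precisely that $f(t_i)$ is $\Gamma^i(U)$-fixed.

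For surjectivity I would write down the inverse: given $(v_i)_i$ with $v_i\in L\langle\unn{X}\rangle^{\Gamma^i(U)}$, set $f(dt_iu):=v_i\cdot u_p$ for $d\in D^\times$, $u\in U$. The only non-formal point is well-definedness. If $dt_iu=d't_iu'$ (necessarily with the same index $i$, the double cosets being disjoint), then $\gamma:=d'^{-1}d=t_i(u'u^{-1})t_i^{-1}\in D^\times\cap t_iUt_i^{-1}=\Gamma^i(U)$ and $u'u^{-1}=t_i^{-1}\gamma t_i$, so
\[ v_i\cdot u'_p=v_i\cdot\big((u'u^{-1})_p\,u_p\big)=\big(v_i\cdot(t_i^{-1}\gamma t_i)_p\big)\cdot u_p=v_i\cdot u_p, \]
the middle equality being multiplicativity of the weight action of $\Delta$ (Definition~\ref{ocdefn}) and the last being $\Gamma^i(U)$-invariance of $v_i$. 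Granting well-definedness, $f$ is visibly left $D^\times$-invariant, satisfies $f(gu_0)=f(g)\cdot(u_0)_p$ by multiplicativity again, and is $L\langle\unn{X}\rangle$-valued since $\Delta_s$ preserves $L\langle\unn{X}\rangle$; thus $f\in S_{\kappa_\psi,r}^{D,\dagger}(U)$ and $f(t_i)=v_i$, so the two maps are mutually inverse.

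I expect no genuine obstacle: the lemma is essentially formal once the right notion of the $\Gamma^i(U)$-action is fixed, the one slightly subtle point being exactly that one conjugates $\gamma$ back into $U$ (landing it in $\Delta_s$) rather than using its naive image in $D_p^\times$. As a sanity check, when $U$ is sufficiently small (Definition~\ref{neat}) the quotients $\overline{\Gamma^i(U)}$ are trivial, central units act trivially by Remark~\ref{rk71}, and the right-hand side of \eqref{triforce} collapses to $\bigoplus_{i=1}^h L\langle\unn{X}\rangle$.
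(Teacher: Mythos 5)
Your proof is correct and follows essentially the same route as the paper's: evaluate $f$ at the representatives $t_i$ via the decomposition $D_f^\times=\coprod_i D^\times t_i U$, use the transformation law to see $f$ is determined by $(f(t_i))_i$, and check the values land in the $\Gamma^i(U)$-invariants with $\gamma$ acting through $(t_i^{-1}\gamma t_i)_p$. You go slightly further than the paper by writing out the inverse map and verifying its well-definedness (surjectivity), a step the paper leaves implicit; this is a welcome completion rather than a different argument.
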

	
	\begin{proof}Let $f \in  S_{\kappa_\psi,r}^{D,\dagger}(U) $. For  $g \in D_f^\times$ we can decompose it as $g=d t_i \g$ for some $i$, $d \in D^\times$ and $u \in \K$. Now the image of $g$ in (some)  $L\langle \unn{X}	 \rangle$ under $f$ is given by $f(g)=f(dt_i \g)=f(t_i \g)=f(t_i) \cdot u_p.$ Therefore it is enough to know where the $t_i$ are sent. But note that if $u \in \KKK^i(U)$, then $\g=t_i^{-1} d t_i$ for some $d \in D^\times$ and thus $f(t_i)=f(t_i t_i^{-1} d t_i)=f(t_i) \cdot u_p,$ from which we see that  the image must  be in $L\langle\unn{X}	 \rangle^{\Gamma^i(U)}$.

	\end{proof}

	\begin{rmrk}\label{applev}
		We note here that it is always possible to choose $\nn$ such that $U_0(\nn p^s)$ is sufficiently small \added[id=c]{(cf. \cite[Lemma 7.1]{hidaon}). Alternatively, we can take $U'$ being sufficiently small and consider level $U' \cap U_0(p^s)$. From now on we assume our level is sufficiently small.}
	\end{rmrk}

	Let $e$ denote the fundamental unit in $\OO_F^\times$ and let $(\k_{\psi},r)$ be a locally algebraic weight sending $e$ to $\Norm(e)^r$, which implies that $\Gamma^i(U)$ acts trivially on $ L\langle \unn{X}	 \rangle$ (by our sufficiently small assumption). Then from (\ref{triforce}) we have the following commutative diagram

	$$
	\xymatrix@C=4em@R=4em{
		S_{\k_\psi,r}^{D,\dagger}(U) \ar[r]^{\sim} \ar[d]^{U_\gothp} & \overset{h}{\underset{i=1}{\bigoplus}} L\langle \unn{X} \rangle \ar[d]_{\mathfrak{U}_{\gothp}}  \\
		S_{\k_\psi,r}^{D,\dagger}(U) \ar[r]^{\sim}  & \overset{h}{\underset{i=1}{\bigoplus}} L\langle \unn{X}	 \rangle.  }
	$$
	
	Therefore, in order to compute the action of $U_\gothp$, it is enough to compute $\mathfrak{U}_{\gothp}$. 
	
	\begin{defn}
		For $\ps | p$ let $\eta_\ps \in D_f^\times$ be the element which is the identity at all places different from $\ps$ and at $\ps$ it is the matrix $ \left ( \begin{smallmatrix} \pi_\ps&0\\ 0&1 \end{smallmatrix} \right)$, for $\pi_\ps$ a uniformiser of $F_\ps$. For each $\ps$ as above and $U=U_0(\nn p^s)$, we define the Hecke operators $T_\ps$ as the double coset operators given by $[\K \eta_\ps \K]$ and let $U_p=\prod_{\gothp \in \Sigma_p} U_{\ps}$ (recall that $p$ is unramified). \added[id=h]{Note that $T_\ps$ is independent of the choice of uniformiser.}  
		
	\end{defn}
	
	\begin{prop}
		For $U=U_0(\nn p^s)$, the double coset $[\K \eta_{\gothp} \K]$  can be written as  $$\coprod_{\a \in \OO_{\gothp}/ \pi_{\gothp}} U\left(\begin{smallmatrix} \pi_{\gothp} &0 \\ \a\pi_{\gothp}^{s_\gothp}& 1 \end{smallmatrix} \right).$$ 
	\end{prop}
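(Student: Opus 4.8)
The plan is to reduce at once to a local computation at the prime $\gothp$ and then carry out the standard left-coset bookkeeping for a $\GL_2$ Hecke operator. Since $U=U_0(\nn p^s)$ is a restricted product of local groups and $\eta_\gothp$ is the identity at every place $\ne\gothp$, one has $U\eta_\gothp U=\big(\prod_{\vv\ne\gothp}U_\vv\big)\times\big(U_{0,\gothp}\,\eta_\gothp\,U_{0,\gothp}\big)$ inside $D_f^\times$, where $U_{0,\gothp}=\{\g\in\GL_2(\OO_\gothp):\g\equiv\left(\begin{smallmatrix}*&*\\0&*\end{smallmatrix}\right)\bmod\gothp^s\}$ and $\eta_\gothp$ now denotes the local matrix $\left(\begin{smallmatrix}\pi_\gothp&0\\0&1\end{smallmatrix}\right)\in\GL_2(F_\gothp)$ (here $s_\gothp=s$ since $p$ is unramified in $F$). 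Hence the decomposition of $U\eta_\gothp U$ into left $U$-cosets is induced by that of $U_{0,\gothp}\,\eta_\gothp\,U_{0,\gothp}$ into left $U_{0,\gothp}$-cosets, and, writing $\pi=\pi_\gothp$ and $q=|\OO_\gothp/\pi|$, it suffices to prove
\[
U_{0,\gothp}\,\eta_\gothp\,U_{0,\gothp}=\coprod_{\a\in\OO_\gothp/\pi}U_{0,\gothp}\left(\begin{smallmatrix}\pi&0\\ \a\pi^{s}&1\end{smallmatrix}\right).
\]
I would then check three things in turn: (i) each matrix on the right lies in the double coset; (ii) distinct residues $\a\bmod\pi$ give distinct left cosets; (iii) these $q$ cosets exhaust the double coset.

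Steps (i) and (ii) are one-line matrix manipulations. For (i), the factorization $\left(\begin{smallmatrix}\pi&0\\ \a\pi^{s}&1\end{smallmatrix}\right)=\eta_\gothp\left(\begin{smallmatrix}1&0\\ \a\pi^{s}&1\end{smallmatrix}\right)$ suffices, the second factor lying in $U_{0,\gothp}$ because its lower-left entry is divisible by $\pi^s$. For (ii), multiplying the $\a$-matrix by the inverse of the $\beta$-matrix gives $\left(\begin{smallmatrix}1&0\\ (\a-\beta)\pi^{s-1}&1\end{smallmatrix}\right)$, which (using $s\ge1$, so $\pi^{s-1}$ is integral) lies in $U_{0,\gothp}$ if and only if $(\a-\beta)\pi^{s-1}\equiv0\bmod\pi^s$, i.e.\ if and only if $\a\equiv\beta\bmod\pi$.

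Step (iii) is the only part carrying any content, though even there the difficulty is slight — it is an Iwahori-level coset computation — and I would argue it directly by row reduction rather than by a dimension count. Given $u_1\eta_\gothp u_2$ with $u_i\in U_{0,\gothp}$, since $u_1\in U_{0,\gothp}$ it is enough to place $\eta_\gothp u_2$ in one of the listed cosets. Writing $u_2=\left(\begin{smallmatrix}a&b\\c&d\end{smallmatrix}\right)$, the conditions $\det u_2\in\OO_\gothp^\times$ and $\val_\gothp(c)\ge s\ge1$ force $a,d\in\OO_\gothp^\times$, and one can then left-multiply $\eta_\gothp u_2=\left(\begin{smallmatrix}\pi a&\pi b\\ c&d\end{smallmatrix}\right)$ by suitable elements of $U_{0,\gothp}$ — a diagonal matrix to normalize the diagonal, a unipotent upper-triangular matrix to clear the top-right entry, and one more diagonal matrix — to reach $\left(\begin{smallmatrix}\pi&0\\ d^{-1}c&1\end{smallmatrix}\right)$, with $d^{-1}c=\a\pi^s$ for some $\a\in\OO_\gothp$ since $\val_\gothp(c)\ge s$. (The one thing to verify along the way is that the diagonal correction factor $1-a^{-1}bd^{-1}c$ is a unit, which holds since $\val_\gothp(c)\ge1$.) This puts $\eta_\gothp u_2$, hence $u_1\eta_\gothp u_2$, into $U_{0,\gothp}\left(\begin{smallmatrix}\pi&0\\ \a\pi^{s}&1\end{smallmatrix}\right)$. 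With (i)--(iii) established the displayed local decomposition follows, and feeding it back through the product over places yields the proposition, where $\left(\begin{smallmatrix}\pi_\gothp&0\\ \a\pi_\gothp^{\,s_\gothp}&1\end{smallmatrix}\right)$ is to be read as the element of $D_f^\times$ equal to that matrix at $\gothp$ and to $1$ at all other places. As an alternative to the row reduction, once (i) and (ii) produce $q$ distinct cosets one may instead finish by computing the index $[U_{0,\gothp}:U_{0,\gothp}\cap\eta_\gothp U_{0,\gothp}\eta_\gothp^{-1}]$ to be $q$; the only subtlety there is that $s\ge1$ is precisely what makes this a ``$U_\gothp$''-type double coset with $q$ left cosets rather than a ``$T_\gothp$''-type one with $q+1$.
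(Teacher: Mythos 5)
Your proof is correct and complete: the reduction to the local double coset at $\gothp$ (with $s_\gothp=s$ since $p$ is unramified), the verification that each $\left(\begin{smallmatrix}\pi_\gothp&0\\ \a\pi_\gothp^{s}&1\end{smallmatrix}\right)=\eta_\gothp\left(\begin{smallmatrix}1&0\\ \a\pi_\gothp^{s}&1\end{smallmatrix}\right)$ lies in the double coset, the disjointness computation via $\left(\begin{smallmatrix}1&0\\ (\a-\beta)\pi_\gothp^{s-1}&1\end{smallmatrix}\right)$, and the row-reduction of $\eta_\gothp u_2$ to $\left(\begin{smallmatrix}\pi_\gothp&0\\ d^{-1}c&1\end{smallmatrix}\right)$ (using that $a,d$ are units and $1-a^{-1}bd^{-1}c$ is a unit) all check out. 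The paper itself states this proposition without proof, treating it as the standard Iwahori-level coset decomposition for a $U_\gothp$-type operator, so your argument supplies exactly the routine verification that was omitted.
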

	From this it follows that  the action of $U_{\gothp}$ is given by $ (f|U_{\gothp})(g)= \sum_{\a \in \OO_{\gothp}/ \pi_{\gothp}} f|_{u_\a}(g)$ for $g \in D_f^\times$, where $u_\a= \left(\begin{smallmatrix} \pi_{\gothp} &0 \\ \a\pi_{\gothp}^{s_\ps}& 1 \end{smallmatrix} \right)$. 
	
	\begin{defn}
		For each $t_i$ as above define $$\Theta(i,j):= \{\a \in \OO_{\gothp_i}/ \pi_{\gothp_i} \mid t_iu_\a^{-1}=dt_j\g_\a, \text{ for some } d \in D^\times, \g_\a \in U \}$$ and  let $T_{i,j}=\sum_{\beta \in \Theta(i,j)}  (\g_\beta u_\beta)_p $.  Here  $u_\beta= \left(\begin{smallmatrix} \pi_{\gothp} &0 \\ \beta\pi_{\gothp}^{s_\ps}& 1 \end{smallmatrix} \right).$

	\end{defn}

	\begin{prop}\label{matstruc}
		The matrices $(\g_\beta u_\beta)_p$ are in $ \left(\begin{smallmatrix} \pi \OO_p^\times &\OO_p\\ \pi^s \OO_p & \OO_p^\times\end{smallmatrix} \right)$ where\added[id=h]{ $U$ has wild level  $\geq \pi^s$.}
	\end{prop}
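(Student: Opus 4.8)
The plan is to reduce the statement to an explicit $2\times 2$ matrix multiplication performed one prime $\ps\mid p$ at a time. By the definition of $\Theta(i,j)$, each $\beta$ contributing to $T_{i,j}$ comes equipped with an element $\g_\beta\in U$, so the only ingredients are the shape of the $p$-component of an element of $U$, the explicit form of $u_\beta=\left(\begin{smallmatrix}\pi_\ps & 0\\ \beta_\ps\pi_\ps^{s_\ps} & 1\end{smallmatrix}\right)_{\ps\mid p}$, and some bookkeeping of valuations. Note that the claim concerns each individual matrix $(\g_\beta u_\beta)_p$, not the sums $T_{i,j}$, so no closure-under-addition issues arise.

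First I would record the shape of $(\g_\beta)_p$. As $\g_\beta\in U\subset\wh{\OO}_D^\times$, its $p$-component (via the fixed isomorphism $\OO_D\otimes\OO_p\cong M_2(\OO_p)$) is an integral matrix whose determinant is a unit at each $\ps$; and since $U$ has wild level $\geq\pi^s$, this $p$-component lies in $\Delta_s$. Writing $(\g_\beta)_\ps=\left(\begin{smallmatrix} a_\ps & b_\ps\\ c_\ps & d_\ps\end{smallmatrix}\right)$, the defining conditions of $\Delta_s$ give $c_\ps\in\pi_\ps^{s_\ps}\OO_\ps$ and $\pi_\ps\nmid d_\ps$, hence $d_\ps\in\OO_\ps^\times$; and from $\det(\g_\beta)_\ps=a_\ps d_\ps-b_\ps c_\ps\in\OO_\ps^\times$ together with $b_\ps c_\ps\in\pi_\ps\OO_\ps$ we get $a_\ps\in\OO_\ps^\times$. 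This last point — that the top-left entry of $(\g_\beta)_\ps$ is forced to be a unit — is the one step that is not purely formal, and it uses the unit-determinant property together with the level condition.

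Then I would multiply, at each $\ps\mid p$,
\[
(\g_\beta u_\beta)_\ps=\begin{pmatrix} a_\ps & b_\ps\\ c_\ps & d_\ps\end{pmatrix}\begin{pmatrix}\pi_\ps & 0\\ \beta_\ps\pi_\ps^{s_\ps} & 1\end{pmatrix}=\begin{pmatrix} a_\ps\pi_\ps+b_\ps\beta_\ps\pi_\ps^{s_\ps} & b_\ps\\ c_\ps\pi_\ps+d_\ps\beta_\ps\pi_\ps^{s_\ps} & d_\ps\end{pmatrix},
\]
and read off the four entries: the $(2,2)$-entry is $d_\ps\in\OO_\ps^\times$; the $(1,2)$-entry is $b_\ps\in\OO_\ps$; the $(2,1)$-entry is $c_\ps\pi_\ps+d_\ps\beta_\ps\pi_\ps^{s_\ps}\in\pi_\ps^{s_\ps}\OO_\ps$, both summands lying there since $c_\ps\in\pi_\ps^{s_\ps}\OO_\ps$; and the $(1,1)$-entry is $\pi_\ps\bigl(a_\ps+b_\ps\beta_\ps\pi_\ps^{s_\ps-1}\bigr)$, which lies in $\pi_\ps\OO_\ps^\times$ because $a_\ps\in\OO_\ps^\times$ and $b_\ps\beta_\ps\pi_\ps^{s_\ps-1}\in\pi_\ps\OO_\ps$. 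Collating these components over all $\ps\mid p$ gives exactly $(\g_\beta u_\beta)_p\in\left(\begin{smallmatrix}\pi\OO_p^\times & \OO_p\\ \pi^s\OO_p & \OO_p^\times\end{smallmatrix}\right)$.

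I do not expect any real obstacle here: granting the coset decomposition of the preceding Proposition and the definition of wild level, the statement is essentially pure bookkeeping. The only places a moment's care is needed are the unit-determinant argument forcing $a_\ps\in\OO_\ps^\times$, and confirming that the cross-term $b_\ps\beta_\ps\pi_\ps^{s_\ps-1}$ appearing in the $(1,1)$-entry genuinely lies in $\pi_\ps\OO_\ps$ — which holds once the wild level exponent at $\ps$ is at least $2$ — so that the $(1,1)$-entry is $\pi_\ps$ times a unit and not merely $\pi_\ps$ times an integer.
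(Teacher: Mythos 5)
Your computation is, in substance, the same local check that the paper delegates to \cite[Proposition 3.1]{eovb}: use the wild level to pin down the shape of $(\g_\beta)_\ps$, deduce $d_\ps\in\OO_\ps^\times$ and (via the unit determinant, since $\g_\beta\in U\subset\wh{\OO}_D^\times$) $a_\ps\in\OO_\ps^\times$, multiply by $u_\beta$ prime by prime, and read off the entries. All of those steps, the matrix product, and the containments $(1,2)\in\OO_\ps$, $(2,1)\in\pi_\ps^{s}\OO_\ps$, $(2,2)\in\OO_\ps^\times$ are correct.

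The gap is exactly the point you flag and then leave unresolved: that the $(1,1)$-entry lies in $\pi\OO_p^\times$ rather than merely $\pi\OO_p$. Your argument needs $b_\ps\beta_\ps\pi_\ps^{s-1}\in\pi_\ps\OO_\ps$, i.e.\ $s\geq 2$ at each $\ps$, whereas the proposition is stated for wild level $\geq\pi^s$ with any $s\geq 1$, and the paper does invoke the $s=1$ case later (Proposition \ref{618} is stated for any $U_0(\nn p^s)$, and the computations in the centre are at levels such as $U_0(3\gothp_{11})$ and $U_0(2\gothq_{11})$). Moreover, no amount of extra bookkeeping of the kind you use can close this: for $s=1$ the element $\g=\left(\begin{smallmatrix}-\beta & 1\\ -1-\beta & 1\end{smallmatrix}\right)$ with $\beta\equiv -1 \bmod \pi_\ps$ lies in $\Delta_1$ and has unit determinant, yet $\g u_\beta=\left(\begin{smallmatrix}0 & 1\\ -\pi_\ps & 1\end{smallmatrix}\right)$, whose $(1,1)$-entry is not in $\pi_\ps\OO_\ps^\times$. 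So for $s=1$ the unit claim does not follow from the only hypothesis your proof (and the cited local computation) uses, namely $\g_\beta\in U$; it could only come from additional global information about the specific $\g_\beta=t_j^{-1}d^{-1}t_i u_\beta^{-1}$, if it is true at all. The fix is either to prove the proposition under the hypothesis that the wild level exponent is at least $2$ at every $\ps\mid p$ (which covers the boundary computations), or to weaken the $(1,1)$-entry to $\pi\OO_p$ when $s=1$; note that the containments you do establish for every $s\geq 1$ (valuation $\geq 1$ in the $(1,1)$-entry, $\geq s$ in the $(2,1)$-entry, $d_\ps$ a unit) are the only inputs actually used in Corollary \ref{cor619} and Proposition \ref{618}, so the weaker statement suffices downstream.
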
	
	
	\begin{proof}
		The proof follows $mutatis$ $mutandis$ from the proof of \cite[Proposition 3.1]{eovb}.
	\end{proof}
	
	\begin{defn}\added[id=c]{Let $n \in \ZZ_{ \geq 0}$ and let $A$ be a $m \times m$ matrix with $m \equiv 0 \mod n$ or $m=\infty$. Then, by chosing a partion of the $m$ basis elements into disjoint subsets of size $n$, we can (after permuting the basis elements) consider $A$ as a  matrix partitioned into blocks of size $n \times n$. We call a matrix a $n \times n$-block matrix if we have made such a choice of partition of the basis elements} 
	\end{defn}

\begin{rmrk}\label{basis rem}
\added[id=c]{	In what follows, we will be looking at compact operators acting on $B=\oplus_{i=1}^h L \langle \unn{X} \rangle$. On this space we have a natural choice of basis given by the monomials $\unn{X}$ in each summand. Using this basis one can define the matrix attached to a compact operator and from now on, we will implicitly identify a compact operator with the matrix it defines.} 
	
\added[id=c]{Moreover, by choosing an ordering of our monomials we can think of our compact operators as an $h \times h$ block matrix as follows: let $e_i^j$ denote the $i$-th basis elements in the $j$-th summand in $B$, then we partition be basis elements by $\{e_0^1,\dots,e_0^h\} \cup \{e_1^1, \dots, e_1^h\} \cup \dots$, then using this we partition the matrix attached to a compact operator on $B$.}
\end{rmrk}

	Now, recall that by Lemma \ref{lem75} we have that the action of $U_{\ps}$ is given by
	
	\begin{align*}
	(f|U_{\gothp})(t_i)= \sum_{\a \in \OO_{\gothp_i}/ \pi_{\gothp_i}} f|_{u_\a}(t_i)  =\sum_{\a \in \OO_{\gothp_i}/ \pi_{\gothp_i}} f(t_i u_\a^{-1})\cdot (u_\a)_p =  \sum_{j=1}^h f(t_j)\cdot T_{i,j}
	\end{align*}

	Similarly we can do all of the above for $U_p$. We now show how to explicitly write down the entries of matrix attached to $T_{i,j}$. \added[id=h]{For this we use the standard trick of using a generating function}\footnote{\added[id=h]{See \cite{jacobs,slbd,buzcal} for other places where such functions are used.}}. \added[id=h]{ Specifically, we want to find a power series in some number of variables, such that the entries of the matrix can be described in terms of the coefficients of this power series.}

	\begin{prop}\label{prop:7.1.8}The generating function for the $|_{\k_\psi,r}$ action of\/ $\g= \left(\begin{smallmatrix} a&b\\ c&d \end{smallmatrix} \right) = \left( \left(\begin{smallmatrix} a_j&b_j\\ c_j&d_j \end{smallmatrix} \right) \right) =(\g_j) \in \Delta$ with $(\k_{\psi},r)$ a locally algebraic weight  is given by $$\psi(d)\cdot \prod_{i=1}^g \frac{ \det(\g_{i})^{v_{i}}(c_{i} X_i+d_{i})^{n_{i}+1}}{(c_iX_i+d_i-a_iX_iZ_i-b_iZ_i)}$$ where $(\k_{\psi},r,n,v,w)$ is a weight tuple.

	\end{prop}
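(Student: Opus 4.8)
The generating function we seek is the formal series
$G(\unn{X},\unn{Z}):=\sum_{\unn{l}\in\ZZ_{\geq 0}^g}\bigl(\unn{X}^{\unn{l}}\cdot\g\bigr)\,\unn{Z}^{\unn{l}}$,
regarded as an element of $L\langle\unn{X}\rangle\llbracket\unn{Z}\rrbracket$. By construction the coefficient of $\unn{Z}^{\unn{l}}$ is the image of the basis monomial $\unn{X}^{\unn{l}}$ under the $|_{\k_\psi,r}$-action of $\g$, so that extracting in addition the coefficient of $\unn{X}^{\unn{m}}$ recovers the $(\unn{m},\unn{l})$-entry of the matrix of $\g$ in the monomial basis of Remark \ref{basis rem}. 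The plan is simply to evaluate this series in closed form. First I would insert the explicit formula for the action from Definition \ref{ocdefn}, that is $\unn{X}^{\unn{l}}\cdot\g=\psi(d)\prod_{i=1}^g H(\g_i,X_i,l_i)$ with $H(\g_i,Z,t)=\det(\g_i)^{v_i}(c_iZ+d_i)^{n_i}\bigl(\tfrac{a_iZ+b_i}{c_iZ+d_i}\bigr)^{t}$, where $(\k_\psi,r,n,v,w)$ is the weight tuple.

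Since $\psi(d)$ and the factors $\det(\g_i)^{v_i}(c_iX_i+d_i)^{n_i}$ do not involve $\unn{l}$, the sum over $\unn{l}\in\ZZ_{\geq 0}^g$ splits as a product over $i\in\{1,\dots,g\}$, and in the $i$-th factor one is left with the one-variable geometric series $\sum_{l_i\geq 0}u_i^{l_i}$ where $u_i=\tfrac{(a_iX_i+b_i)Z_i}{c_iX_i+d_i}$. Here is where the hypothesis $\g\in\Delta$ (Notation \ref{nota}) enters: it forces $\pi_{\ps_i}\nmid d_i$, hence $d_i\in\OO_p^\times$, and $\pi_{\ps_i}\mid c_i$, so $c_iX_i+d_i=d_i(1+d_i^{-1}c_iX_i)$ is a unit of $L\langle X_i\rangle$; consequently $u_i$ is a well-defined element of $L\langle X_i\rangle\llbracket Z_i\rrbracket$ with vanishing $Z_i^0$-term, the geometric series converges there, and its sum is $(1-u_i)^{-1}=\tfrac{c_iX_i+d_i}{c_iX_i+d_i-a_iX_iZ_i-b_iZ_i}$.

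Multiplying the sum of the geometric series back against the $\unn{l}$-independent factors and absorbing the extra $(c_iX_i+d_i)$ into $(c_iX_i+d_i)^{n_i}$ to form $(c_iX_i+d_i)^{n_i+1}$ yields
$G(\unn{X},\unn{Z})=\psi(d)\prod_{i=1}^g\tfrac{\det(\g_i)^{v_i}(c_iX_i+d_i)^{n_i+1}}{c_iX_i+d_i-a_iX_iZ_i-b_iZ_i}$,
which is the asserted formula. I do not expect any genuine difficulty here: the computation is a bookkeeping exercise with a geometric series, and the only point that must be handled with care is the justification that the series makes sense — namely that $d_i$ is a unit and $c_i$ is topologically nilpotent so that $(c_iX_i+d_i)^{-1}\in L\langle X_i\rangle$, which is exactly the content of $\g\in\Delta$ (equivalently of the block shape in Proposition \ref{matstruc}), and which also ensures that each $\unn{X}^{\unn{l}}\cdot\g$ lies in $L\langle\unn{X}\rangle$ even when $l_i>n_i$ or $n_i<0$, so that $G$ indeed belongs to $L\langle\unn{X}\rangle\llbracket\unn{Z}\rrbracket$.
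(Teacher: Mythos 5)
Your argument is correct and is essentially the paper's own proof: both form the generating series $\sum_{\unn{l}}(\unn{X}^{\unn{l}}\cdot\g)\,\unn{Z}^{\unn{l}}$, pull out the $\unn{l}$-independent factors $\psi(d)\det(\g_i)^{v_i}(c_iX_i+d_i)^{n_i}$, and sum the geometric series $\sum_{l_i}Z_i^{l_i}\bigl(\tfrac{a_iX_i+b_i}{c_iX_i+d_i}\bigr)^{l_i}=\bigl(1-Z_i\tfrac{a_iX_i+b_i}{c_iX_i+d_i}\bigr)^{-1}$ to produce the extra factor of $c_iX_i+d_i$ in the numerator. Your additional remark that $\g\in\Delta$ makes $c_iX_i+d_i$ a unit of $L\langle X_i\rangle$ (so the series and the action are well defined) is a justification the paper leaves implicit, but it does not change the route.
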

	
	\begin{proof}
		From the above, the action of $|_{\k_{\psi},r} \g$ on $ L\langle \unn{X} \rangle$ is given by 
		\begin{align*}
		\prod_i X_i^{l_i}|\g&=\psi(d)\prod_i  H(\g_i,X_i,l_i) =\sum_{\unn{m} \in \ZZ_{\geq 0}^g} a_{\unn{l},\unn{m}} \prod_{i} X_i^{m_i}
		\end{align*}  Now consider the formal sum $G(\unn{X},\unn{Z},\g)=\sum_{\unn{l},\unn{m}} a_{\unn{l},\unn{m}} \unn{Z}^{\unn{l}} \unn{X}^{\unn{m}} $, then	
		\begin{align*}
		&G(\unn{X},\unn{Z},\g)=\sum_{\unn{l}} \unn{Z}^{\unn{l}}  \sum_{\unn{m}} a_{\unn{l},\unn{m}} \unn{X}^{\unn{m}} \\
		&=\sum_{\unn{l}} \unn{Z}^{\unn{l}} \psi(d) \prod_i   H(\g_1,X_i,l_i) \\
		&=\psi(d) \prod_i \det(\g_i)^{v_i}  (c_iX+d_i)^{n_i}\sum_{l_i} Z_i^{l_i}  \left( \frac{a_iX_i+b_i}{c_iX_i+d_i} \right)^{l_i}  
		\end{align*} The result then follows by noting that $$\sum_{l_i} Z_i^{l_i}  \left( \frac{a_iX_i+b_i}{c_iX_i+d_i} \right)^{l_i}=\frac{1}{ 1-Z_i \left( \frac{a_iX_i+b_i}{c_iX_i+d_i} \right)}.$$ 
		
	\end{proof}
\added[id=c]{	From this we get an expression for $a_{\unn{l},\unn{m}}$. Then, after choosing a bijection from $\a:\ZZ^g \to \ZZ$ we define the matrix associated to the $|_{\k_{\psi},r}$ action as $(a_{\a(\unn{l}),\a(\unn{m})})_{\unn{l},\unn{m}}$. Note that, from the definition of $\Delta$, it follows that $|_{\k_{\psi},r}$ defined a compact operator on $ L\langle \unn{X} \rangle$.}

	\begin{cor}\label{prop712}  The coefficient of\/     $\prod_i X_i^{l_i}Z_i^{m_i}$ in $G(\unn{X},\unn{Z},\g)$ is $$\psi(d) \prod_{i=1}^g \det(\g_i)^{v_i}  C_{n_i}(\g_i,m_i,l_i)$$  where

		$$C_w\left ( \left(\begin{smallmatrix} a&b\\ c&d \end{smallmatrix} \right),x,y \right)= \sum_{t=0}^x {w-y \choose t} {y \choose x-t} a^{x-t}c^t d^{w-y-t}b^{y-x+t}. $$

	\end{cor}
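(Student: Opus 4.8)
The plan is to read the claimed coefficient directly off the closed form in Proposition~\ref{prop:7.1.8}, exploiting that the generating function factors over the infinite places. Recall from the proof of Proposition~\ref{prop:7.1.8} that $G(\unn X,\unn Z,\g)=\sum_{\unn l,\unn m}a_{\unn l,\unn m}\,\unn Z^{\unn l}\unn X^{\unn m}$ and that
\[G(\unn X,\unn Z,\g)=\psi(d)\prod_{i=1}^{g}\det(\g_i)^{v_i}\,G_i(X_i,Z_i),\qquad G_i(X,Z)=\frac{(c_iX+d_i)^{n_i+1}}{c_iX+d_i-a_iXZ-b_iZ}.\]
Because $\psi(d)$ is a scalar and each $G_i$ involves only the pair $(X_i,Z_i)$, the coefficient $a_{\unn l,\unn m}$ equals $\psi(d)\prod_i\det(\g_i)^{v_i}$ times the product over $i$ of the coefficient of $X_i^{m_i}Z_i^{l_i}$ in $G_i$. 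Hence it suffices to establish the one‑variable statement: dropping the index $i$ and writing $w$ for $n_i$, the coefficient of $X^{m}Z^{l}$ in $\frac{(cX+d)^{w+1}}{cX+d-aXZ-bZ}$ equals $C_w(\g,m,l)$.

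For this I would factor the denominator as $(cX+d)\bigl(1-Z\tfrac{aX+b}{cX+d}\bigr)$ and expand the geometric series in $Z$, obtaining
\[\frac{(cX+d)^{w+1}}{cX+d-aXZ-bZ}=(cX+d)^{w}\sum_{l\ge 0}Z^{l}\Bigl(\frac{aX+b}{cX+d}\Bigr)^{l}=\sum_{l\ge 0}Z^{l}(aX+b)^{l}(cX+d)^{w-l}.\]
Taking the coefficient of $Z^{l}$ isolates $(aX+b)^{l}(cX+d)^{w-l}$; applying the binomial theorem to each of the two factors and collecting the coefficient of $X^{m}$ gives $\sum_{t}\binom{l}{m-t}\binom{w-l}{t}a^{m-t}b^{l-m+t}c^{t}d^{w-l-t}$, where $t$ runs over $0\le t\le m$ since $\binom{l}{m-t}=0$ for $m-t<0$. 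This is precisely the defining expression for $C_w(\g,m,l)$; restoring the factors $\psi(d)$ and $\det(\g_i)^{v_i}$ and taking the product over $i$ then yields the claim.

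There is no genuine obstacle here: the argument is a double binomial expansion together with a geometric series. The one step requiring a word of care is the meaning of $(cX+d)^{w-l}$ when $l>w$, which must be read as the formal power series $\sum_{t\ge 0}\binom{w-l}{t}c^{t}d^{w-l-t}X^{t}$ with generalized binomial coefficients — this is exactly where the coefficients $\binom{w-l}{t}$ in the definition of $C_w$ come from, and it is legitimate because $d=d_i\in\OO_{\ps_i}^{\times}$ for $\g\in\Delta$ (alternatively one performs the manipulation in the ring of formal power series over the fraction field, and then observes that everything converges $\ps$‑adically once $\g\in\Delta$).
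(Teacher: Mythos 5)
Your argument is correct and is exactly the computation the paper leaves out: the paper's proof of Corollary \ref{prop712} just says the expression follows by expanding the power series, and your geometric-series-plus-double-binomial expansion (with the generalized binomial coefficients $\binom{w-l}{t}$ interpreting $(cX+d)^{w-l}$ for $l>w$, legitimate since $d\in\OO_\ps^\times$) is that expansion carried out. One remark on indexing: what you prove — that the coefficient of $X^{m}Z^{l}$ is $C_w(\gamma,m,l)$, first argument the $X$-exponent and second the $Z$-exponent — is the version consistent with the proof of Proposition \ref{prop:7.1.8} (where $l$ is attached to $Z$ and $m$ to $X$) and with the matrix entries in Corollary \ref{cor714}; the corollary as printed pairs the monomial $\prod_i X_i^{l_i}Z_i^{m_i}$ with $C_{n_i}(\gamma_i,m_i,l_i)$, which, since $C_w$ is not symmetric in its last two arguments, is an index slip in the statement rather than a flaw in your proof, so you have indeed proved the intended statement.
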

	
	\begin{proof}
		The proof of this expression is a simple matter of expanding the power series, which is an un-illuminating computation. Similar results can be found in \cite[Appendix A]{jacobs}.
	\end{proof}
	
	\begin{num}\label{6.2.8}
		In order to write down a matrix for $U_p$ we need to choose a basis of $L\langle \unn{X} \rangle$. For computational purposes we will choose the one given by $\unn{X}^{\unn{l}}$ for $\unn{l} \in \ZZ_{\ge 0}^g$. Now in order to compute the finite approximations to the infinite matrix of $U_p$, we will also need to choose an ordering of this basis (cf. \ref{basis rem}), which is the same as choosing a  bijection $Bi:\ZZ_{\geq 0}^g \to \ZZ_{\geq 0}$. 	
	\end{num}
	\begin{nota}
		From now on, until the end of the section we make a fixed choice of bijection $Bi$ and if $m \in \ZZ_{ \geq 0}$ and $Bi^{-1}(m)=(m_i)_{i=1}^g$ then set $b(m)=\sum_{i} m_i$.
	\end{nota}

	\begin{rmrk}
		In what follows the choice of $Bi$ will only be for relevant for computational purposes. In particular, our theoretical results do not depend in an essential way in our choice of ordering. 
	\end{rmrk}

	It then follows from Corollary \ref{prop712} that:
	\begin{cor}\label{cor714}
		
		Let $\unn{x},\unn{y} \in \ZZ_{\ge 0}^g$ and $\g=  \left( \left(\begin{smallmatrix} \pi_j a_j &b_j\\ c_j\pi_j^{\added[id=h]{s}}&d_j \end{smallmatrix} \right) \right) \in \added[id=h]{\Delta_s.}$ Let $x=Bi(\unn{x})$ (similarly for $y$) and let $(\k_{\psi},r,n,v,w)$ be a weight tuple with $(\k_\psi,r)$ a locally algebraic weight. Then the $(x,y)$-th entry of the matrix representing the $\mid_{\k_\psi,r} \g$ action on $L\langle \unn{X}	 \rangle$  is given by\footnote{Recall that we are using Notation \ref{usefulnota}.} $$\Omega_{\k_\psi,r}(\g,x,y):= \psi(d)  \prod_{i=1}^g \pi_i^{x_i} \det(\g_i)^{v_i} d_i^{n_i} \frac{a_i^{x_i}}{d_i^{y_i}}  b_i^{y_i-x_i} C_{n_i}(\g_i,x_i,y_i)$$ 
		where $$C_{n_i}(\g_i,x_i,y_i)=\sum_{t=0}^{x_i} {n_i-y_i \choose t} {y_i \choose x_i-t} \left( \frac{b_i c_i}{a_i d_i} \right)^t  \pi_i^{t(\added[id=h]{s}-1)}.$$ 
	\end{cor}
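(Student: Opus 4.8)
The plan is to read the formula off directly from Corollary~\ref{prop712} by substituting the explicit shape of $\g$ and keeping track of the powers of the uniformisers $\pi_i$. First I would recall the set-up of \ref{6.2.8}, \ref{basis rem} and Notation~\ref{usefulnota}: having fixed the bijection $Bi$, the $(x,y)$-entry of the matrix of $\mid_{\k_\psi,r}\g$ on $L\langle \unn{X}\rangle$ is, by Corollary~\ref{prop712} applied to $\g$, the relevant coefficient of a monomial $\prod_i X_i^{x_i}Z_i^{y_i}$ in the generating function $G(\unn{X},\unn{Z},\g)$ of Proposition~\ref{prop:7.1.8}, namely $\psi(d)\prod_{i=1}^g\det(\g_i)^{v_i}\,C_{n_i}(\g_i,\cdot,\cdot)$, where $C_w$ is the binomial sum of Corollary~\ref{prop712} and the two index slots are the exponents $x_i$, $y_i$ in the order dictated by the conventions of Proposition~\ref{prop:7.1.8} and \ref{basis rem}. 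Since $\g\in\Delta_s$ we have $\det(\g_i)\neq 0$ and $\pi_i\nmid d_i$, so $d_i\in\OO_{\ps_i}^\times$, and for $\g=(\g_\beta u_\beta)_p$ the entry $\pi_i a_i$ has unit part $a_i$ by Proposition~\ref{matstruc}; hence every division appearing below is by a unit at $\ps_i$ and the asserted expression is well defined.

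Next I would carry out the substitution. Writing $\g_i=\left(\begin{smallmatrix}\pi_i a_i & b_i\\ c_i\pi_i^{s} & d_i\end{smallmatrix}\right)$ and inserting $a\mapsto\pi_i a_i$, $b\mapsto b_i$, $c\mapsto c_i\pi_i^{s}$, $d\mapsto d_i$, $w\mapsto n_i$ into the definition of $C_{n_i}$, the $t$-th term acquires $\pi_i^{x_i-t}$ from $(\pi_i a_i)^{x_i-t}$ and $\pi_i^{st}$ from $(c_i\pi_i^{s})^{t}$, so every term carries the common factor $\pi_i^{x_i}$ and leaves only $\pi_i^{t(s-1)}$ inside the sum. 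Factoring $(\pi_i a_i)^{x_i}d_i^{n_i-y_i}b_i^{y_i-x_i}=\pi_i^{x_i}a_i^{x_i}d_i^{n_i-y_i}b_i^{y_i-x_i}$ out of the sum converts the remaining sum into $\sum_{t}\binom{n_i-y_i}{t}\binom{y_i}{x_i-t}\bigl(\tfrac{b_ic_i}{a_id_i}\bigr)^{t}\pi_i^{t(s-1)}$, which is exactly the quantity denoted $C_{n_i}(\g_i,x_i,y_i)$ in the statement (where now the entries written in $\g_i$ are $\pi_i a_i$, $b_i$, $c_i\pi_i^s$, $d_i$). Reassembling the product over $i$: the factor $\det(\g_i)^{v_i}$ is untouched, $d_i^{n_i-y_i}$ is rewritten as $d_i^{n_i}/d_i^{y_i}$, and one reads off precisely
\[
\Omega_{\k_\psi,r}(\g,x,y)=\psi(d)\prod_{i=1}^g\pi_i^{x_i}\det(\g_i)^{v_i}d_i^{n_i}\frac{a_i^{x_i}}{d_i^{y_i}}b_i^{y_i-x_i}C_{n_i}(\g_i,x_i,y_i).
\]

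The only point requiring genuine care -- and hence the main (mild) obstacle -- is bookkeeping: matching the row/column convention of \ref{basis rem} with the roles of $\unn{Z}$ (tracking the input monomial) and $\unn{X}$ (tracking the output monomial) in $G$, so that the ``$(x,y)$-entry'' really is the coefficient indexed by the pair $(x_i,y_i)$ in the order appearing in the statement; and keeping the symbol $C_{n_i}$ straight, since in Corollary~\ref{cor714} it denotes the reduced sum above rather than the full $C_w$ of Corollary~\ref{prop712}. Once these identifications are in place, Corollary~\ref{cor714} is an immediate consequence of Corollary~\ref{prop712}; the remaining steps are the routine expansion of $C_w$ and the collection of $\pi_i$-powers carried out above, together with the valuation statement of Proposition~\ref{matstruc} guaranteeing that $a_i$ and $d_i$ are units at $\ps_i$.
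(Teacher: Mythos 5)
Your proposal is correct and matches the paper's own treatment: Corollary \ref{cor714} is stated there as an immediate consequence of Corollary \ref{prop712}, obtained exactly as you do by substituting $a\mapsto\pi_i a_i$, $c\mapsto c_i\pi_i^s$ into $C_{n_i}$, extracting the common factor $\pi_i^{x_i}$, and rewriting the remaining sum as the reduced $C_{n_i}(\g_i,x_i,y_i)$ with the row/column bookkeeping fixed by \ref{basis rem} and $Bi$. (Your appeal to Proposition \ref{matstruc} for $a_i$ being a unit is unnecessary -- and for a general element of $\Delta_s$ not justified -- but it is also not needed, since every nonzero term of the sum has $x_i-t\geq 0$ and $y_i-x_i+t\geq 0$, so the expression is a purely formal rearrangement with the only genuine division being by the unit $d_i$.)
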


	\begin{cor}\label{cor619}
		Let  $\g = \left( \left(\begin{smallmatrix} \pi_j a_j &b_j\\ c_j\pi_j^{s}&d_j \end{smallmatrix} \right) \right) \in \added[id=h]{\Delta_s}$ and $(\k_{\psi},r,n,v,w)$ a weight tuple with $(\k_{\psi},r)$ a locally algebraic weight. Then matrix for the weight $(\k_\psi,r)$ action of\/ $\g$ in $L \langle \unn{X} \rangle$ is such that  the $(x,y)$-th entry has $p$-adic valuation at least $$D+b(x) + \sum_{i=1}^g g(n_i,x_i,y_i)(s-1)$$ where:
		
		\begin{itemize}
			\item  $D=\val_p(\prod_i \det(\g_i)^v_i)$.
			\item   $(x_i),(y_i) \in \ZZ_{\geq 0}^g$, $x=Bi(\unn{x})$, $y=Bi(\unn{y})$.
			\item   $g(x,y,n)= \infty$ if  $x > n \geq y$, otherwise $$g(x,y,n) = \begin{cases} 
			x & \text{if\/ } y=0, \\
			0 & \text{if\/ } y \geq x, \\
			x-y & \text{if\/ } y < x.
			
			\end{cases}$$ (Note that having infinite $p$-adic valuation means that the entry of the matrix is zero.)
		\end{itemize}
	\end{cor}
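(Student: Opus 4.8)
The plan is to read off the $(x,y)$-entry from Corollary~\ref{cor714}, namely
\[
\Omega_{\k_\psi,r}(\g,x,y)=\psi(d)\prod_{i=1}^{g}\pi_i^{x_i}\det(\g_i)^{v_i}d_i^{n_i}\frac{a_i^{x_i}}{d_i^{y_i}}b_i^{y_i-x_i}C_{n_i}(\g_i,x_i,y_i),
\]
and to estimate its $p$-adic valuation index by index. Since $\psi$ has finite order, $\val_p(\psi(d))=0$, and $\prod_i\det(\g_i)^{v_i}$ contributes exactly $D$ by definition; so it suffices to prove, for each $i$, that the $i$-th factor $\pi_i^{x_i}d_i^{n_i}a_i^{x_i}d_i^{-y_i}b_i^{y_i-x_i}C_{n_i}(\g_i,x_i,y_i)$ has valuation at least $x_i+g(n_i,x_i,y_i)(s-1)$, and then sum over $i$ (noting $b(x)=\sum_i x_i$, so the $\sum_i x_i$ produced this way is precisely the $b(x)$ term).

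The key step is to expand $C_{n_i}(\g_i,x_i,y_i)$ via its definition in Corollary~\ref{cor714} and multiply out, so that the $i$-th factor becomes $\sum_t T_{i,t}$ with
\[
T_{i,t}=\binom{n_i-y_i}{t}\binom{y_i}{x_i-t}(\pi_i a_i)^{x_i-t}(c_i\pi_i^{s})^{t}d_i^{n_i-y_i-t}b_i^{y_i-x_i+t};
\]
equivalently $T_{i,t}$ is the general term of the generating-function coefficient from Corollary~\ref{prop712} applied to $\g_i=\left(\begin{smallmatrix}\pi_i a_i&b_i\\ c_i\pi_i^{s}&d_i\end{smallmatrix}\right)$. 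This recombination is the whole point: individually the factors $a_i^{x_i}$, $d_i^{-y_i}$, $b_i^{y_i-x_i}$ need not be integral (when $a_i$ is a non-unit, or when $y_i<x_i$), but after grouping with the $t$-th term of $C_{n_i}$ every $T_{i,t}$ is an honest monomial in the entries of $\g_i\in M_2(\OO_p)$, and $T_{i,t}$ is nonzero only for $\max(0,x_i-y_i)\le t\le x_i$ (and also $t\le n_i-y_i$ when $n_i\ge y_i$), so in that range $y_i-x_i+t\ge0$.

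With the entry in this form the estimate is routine. Since $p$ is unramified in $F$ we have $\val_p(\pi_i)=1$; with $a_i,b_i,c_i,d_i\in\OO_p$ this gives $\val_p(\pi_i a_i)\ge1$, $\val_p(c_i\pi_i^{s})\ge s$, $\val_p(b_i)\ge0$, $d_i\in\OO_p^{\times}$, and the two binomial coefficients are integers, so $\val_p(T_{i,t})\ge(x_i-t)+st=x_i+t(s-1)$. As $s\ge1$ this lower bound is nondecreasing in $t$, whence the $i$-th factor has valuation at least $x_i+t_{\min}(s-1)$, where $t_{\min}$ is the least admissible $t$. The remaining work is the case split that identifies $t_{\min}$ with $g(n_i,x_i,y_i)$: if $y_i\ge x_i$ then $t=0$ is admissible and $t_{\min}=0$; if $y_i<x_i$ (including $y_i=0$) then $t_{\min}=x_i-y_i$; and if $x_i>n_i\ge y_i$ the constraints $t\ge x_i-y_i>n_i-y_i\ge t$ are incompatible, so the factor vanishes and its valuation is $+\infty$. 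Summing over $i$ and adding $D$ gives the asserted bound.

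I do not expect a genuine obstacle here: once the entry is written in the monomial form of Corollary~\ref{prop712}, the argument is a termwise valuation count followed by a short case analysis. The points that need care are (i) never estimating $a_i^{x_i}$, $d_i^{-y_i}$ and $b_i^{y_i-x_i}$ separately, but only after recombining them with $C_{n_i}$; (ii) reading $\binom{n_i-y_i}{t}$ as a generalized binomial coefficient (still an integer) when $n_i<y_i$, which can occur for non-classical locally algebraic weights; and (iii) checking that the degenerate cases $y_i=0$ and the vanishing regime $x_i>n_i\ge y_i$ match the piecewise definition of $g$ exactly.
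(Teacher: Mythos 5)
Your proposal is correct and is essentially the paper's argument: the paper likewise deduces the bound from the explicit entry in Corollary \ref{cor714} (together with Proposition \ref{matstruc}), observing that $g(n_i,x_i,y_i)$ is either $\infty$ or the smallest $t$ for which ${n_i-y_i \choose t}{y_i \choose x_i-t}\neq 0$, so each surviving term has valuation at least $x_i+t(s-1)$. Your write-up simply makes explicit the details the paper leaves to the reader, in particular recombining $a_i^{x_i}d_i^{-y_i}b_i^{y_i-x_i}$ with the terms of $C_{n_i}$ into the integral monomials of Corollary \ref{prop712} and treating ${n_i-y_i \choose t}$ as a generalized binomial coefficient when $n_i<y_i$.
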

	
	\begin{proof}
		This follows at once from Proposition \ref{matstruc} together with Corollary \ref{cor714} and noting that $g(n_i,x_i,y_i)$ is either $\infty$ or the first non-zero $t$ for which ${n_i-y_i \choose t}{y_i \choose x_i-t} \neq 0$.
	\end{proof}
	
Using the isomorphism $$ S_{\k_\psi,r}^{D,\dagger}(U) \overset{\sim}{\lra}   \bigoplus_{i=1}^h L\langle \unn{X}	 \rangle,$$ together with \ref{basis rem}, shows that the matrix associated to  $\mathfrak{U}_{p}$ can be thought of as an \added[id=c]{$h \times h$-block matrix}\footnote{Here $h$ is the class number of $(D,U)$.}. Moreover, since $T_{i,j}=\sum_{\beta \in \Theta(i,j)}  (\g_\beta u_\beta)_p $, the $(x,y)$-block of $\mathfrak{U}_{p}$ is given by $$M_{\k_\psi,r}(x,y):=(F_{i,j}^{\k_\psi,r}(x,y))_{i,j}$$ where $$F_{i,j}^{\k_\psi,r}(x,y):= \sum_{\beta \in \Theta(i,j)} \Omega_{\k_\psi,r}((\g_\beta u_\beta)_p,x,y )$$ and $i,j \in \{1,\dots,h\} $.


	We now want to generalize \cite[Theorem 4.8]{slbd} to give a lower bound for the Newton polygon for the action of $U_p$. First we will normalize our Hecke operators.
	
	\begin{defn}\label{norma} \added[id=c]{For each prime ideal $\ps \in \Sigma_p$  let $\Si_{\ps}$ be the set of  $\vv \in \Sigma_\infty$ factoring through the projection $F_p \to F_{\ps}$. Let $\k$ be a locally algebraic weight with $(k,r,n,v,w)$ the associated weight tuple. Recall that $v \in \ZZ^g$ and that by \ref{nota} we can think of this as $v \in \ZZ^{\Sigma_{\infty}}$.  For each prime ideal $\ps \in \Si_{p}$ we define $v_{\ps}(\k)=\sum_{\vv \in \Sigma_{\ps}} v_\vv$. Using this we normalize our operators as follows: Let $U_p^0=\prod_{\ps \mid p} \pi_{\gothp}^{-v_\ps(\k)}U_p$ and $U_{\ps}^0=\pi_{\gothp}^{-v_\ps(\k)}U_\ps$.}
	\end{defn}			
	
	\begin{rmrk}
		Note that this has the effect of removing the terms $\det(\g_i)^{v_i}$ appearing in Corollary \ref{cor714}.
	\end{rmrk}

	\begin{prop}\label{618}
		Let $\K=U_0(\nn p^s)$ be a sufficiently small level and let $(\kappa,r)$ be any locally algebraic weight. For $n \in \ZZ_{ \geq 0}$ we let $s(n)$ denote the number of $m \in \ZZ_{ \geq 0}$ such that $b(m)=n$. Then the Newton Polygon of the action of\/ $U_p^0$ on $$S_{\k,r}^{D,\dagger}(U) \cong \bigoplus_{i=1}^h L\langle \unn{X}	 \rangle$$ lies above the polygon with slopes $\{0_{s(0)h},1_{s(1)h}, \dots, i_{s(i)h},\dots\}$  where $i_n$ means that $i$ appears $n$ times

	\end{prop}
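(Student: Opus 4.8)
The plan is to deduce this from the entrywise $p$-adic valuation bound of Corollary~\ref{cor619}, combined with the standard estimate that bounds the Newton polygon of a compact operator below in terms of a lower bound on the valuations of the entries of its matrix, provided the basis is ordered so that the bound is non-decreasing. This is the natural generalisation to a totally real field $F$ of \cite[Theorem 4.8]{slbd}, and the argument parallels it.

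First I would fix the basis and its ordering, and then establish the key estimate. By Lemma~\ref{lem75} and Remark~\ref{basis rem}, $U_p^0$ is represented by an $h\times h$-block matrix on $\bigoplus_{i=1}^h L\langle\unn X\rangle$ with basis vectors $e_m^j$ for $j\in\{1,\dots,h\}$ and $m\in\ZZ_{\ge 0}$, where $e_m^j$ is the monomial $\unn X^{Bi^{-1}(m)}$ placed in the $j$-th summand; these are ordered so that every $e_m^j$ with a fixed $m$ precedes every $e_{m'}^{j'}$ with $m'>m$. Thus the rows fall into ``block-rows'' indexed by $m$, block-row $m$ consisting of the $h$ rows $e_m^1,\dots,e_m^h$. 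Since the $(x,y)$-block of $\mathfrak{U}_p$ has entries $F_{i,j}^{\k,r}(x,y)=\sum_{\beta\in\Theta(i,j)}\Omega_{\k,r}\big((\g_\beta u_\beta)_p,x,y\big)$ and each $(\g_\beta u_\beta)_p$ lies in $\Delta_s$ by Proposition~\ref{matstruc}, Corollary~\ref{cor619} applies to each summand; passing to $U_p^0$ cancels the term $D$ appearing there (the Remark after Definition~\ref{norma}), and this cancellation is uniform in $\beta$ because $\det\big((\g_\beta u_\beta)_p\big)$ equals $\prod_{\ps\mid p}\pi_\ps$ up to a unit. As the remaining term in Corollary~\ref{cor619} is a sum of multiples of $s-1\ge 0$ and hence non-negative, each normalised summand has valuation at least $b(x)$; summing over $\beta\in\Theta(i,j)$ and ranging over $i,j\in\{1,\dots,h\}$ preserves this, giving: \emph{every entry of the matrix of $U_p^0$ in block-row $x$ has $p$-adic valuation at least $b(x)$.}

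Next I would run the Newton polygon argument. The Fredholm determinant $\det(1-TU_p^0)=\sum_{n\ge 0}(-1)^nc_n T^n$ is entire, $U_p^0$ being compact, and $c_n$ is the sum of the $n\times n$ principal minors of the matrix. Each such minor is an alternating sum of products of $n$ entries drawn from $n$ distinct rows, so, the lower bound $b(\cdot)$ depending only on the row, every such product has valuation at least the sum of the $n$ smallest members of the multiset of row-bounds. Since $b(m)=n$ holds for exactly $s(n)$ values of $m$ and all $h$ values of $j$, that multiset written in non-decreasing order is precisely $\{0_{s(0)h},1_{s(1)h},\dots,i_{s(i)h},\dots\}$; hence $\val_p(c_n)$ is at least the ordinate at abscissa $n$ of the convex polygon with these slopes, and since $\NP(U_p^0)$ is the lower convex hull of the points $(n,\val_p(c_n))$ it lies above that polygon, which is the assertion.

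I do not expect a serious obstacle. The only genuinely non-trivial input, Corollary~\ref{cor619}, is already established, and the remaining work is bookkeeping: arranging the ordering so that the valuation bound is constant along each block-row, and converting the count of monomials of a fixed total degree into the multiplicities $s(n)h$ (so that in the real quadratic case, where $s(n)=n+1$, the partial sums produce exactly the vertices $\left(\tfrac{i(i+1)h}{2},\tfrac{(i-1)i(i+1)h}{3}\right)$). The point that most deserves care is the uniformity in $\beta$ of the cancellation of the determinant factors under the normalisation of Definition~\ref{norma}; once that is checked, the rest is routine.
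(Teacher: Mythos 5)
Your argument is correct and is essentially the paper's proof: the key input is the same block-row divisibility by $p^{b(x)}$ from Corollary \ref{cor619} (with the normalisation of Definition \ref{norma} cancelling the determinant term), followed by the same count of row-bounds with multiplicities $s(n)h$. The only difference is packaging: the paper cites the Hodge polygon (lower convex hull of minimal valuations of $n\times n$ minors) lying below the Newton polygon, whereas you unpack that fact by bounding the Fredholm coefficients directly through principal minors, which is the same estimate.
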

	
	\begin{proof}
		We do this by giving a lower bound for \added[id=h]{the Hodge polygon (defined below) of $U_p^0$, which is always below the Newton polygon.}
		
		Now recall that the Hodge polygon is given by the lower convex hull of the vertices $(i,min_n)$, where $min_n$ is the minimal $p$-adic valuation of the determinants of all $n \times n$ minors\footnote{\added[id=h]{For more details on Hodge polygons see \cite[Section 4.7]{slbd} and \cite[Section 4.3]{kedlaya}.}}. Note that it clearly lies below the Newton polygon. Now Corollary \ref{cor619} gives that each $h \times h$ block $M_\k(x,y)$ is divisible by $p^{b(x)}$ Using this we can bound the Hodge polygon from below as follows: let $S=\{s_i\}:= \{0_{s(0)h},1_{s(1)h}, \dots, i_{s(i)h},\dots\}$ and let $\Sigma_i=\sum_{j \leq i} s_j$. Then from the above it is easy to see that the Hodge polygon is bounded from below by the convex hull of the points $(i, \Sigma_i)$.
	\end{proof}
	
	\begin{cor}
		In the case $g=2$, the Newton polygon is bounded below by the polygon with vertices given by	 $$(0,0),(h,0),(3h,2h), \dots , \left( \frac{i(i+1)h}{2},\frac{(i-1)i(i+1)h}{3} \right ), \dots.$$	
	\end{cor}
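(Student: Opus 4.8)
The plan is to deduce this immediately from Proposition \ref{618} by specializing to $g=2$ and then reading off the vertices of the bounding polygon. First I would invoke Proposition \ref{618}: it tells us that $\NP(U_p^0)$ lies above the polygon whose slope multiset is $\{0_{s(0)h},1_{s(1)h},\dots,i_{s(i)h},\dots\}$, where $s(n)$ counts the $m\in\ZZ_{\geq 0}$ with $b(m)=n$. When $g=2$ we have $b(m)=m_1+m_2$ for $Bi^{-1}(m)=(m_1,m_2)$, so $s(n)=\#\{(m_1,m_2)\in\ZZ_{\geq 0}^2:m_1+m_2=n\}=n+1$. Hence the bounding polygon has the slope $i$ occurring with multiplicity $(i+1)h$ for every $i\geq 0$.

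Next I would locate the break points. Since the slopes $0,1,2,\dots$ are strictly increasing, the polygon has a genuine vertex exactly at each partial sum of the multiplicities: the vertex separating the slopes $\leq i-1$ from the slopes $\geq i$ has $x$-coordinate $h\sum_{j=0}^{i-1}(j+1)=h\sum_{k=1}^{i}k=\tfrac{i(i+1)h}{2}$ and $y$-coordinate $h\sum_{j=0}^{i-1}j(j+1)$. Substituting the closed form $\sum_{j=1}^{m}j(j+1)=\tfrac{m(m+1)(m+2)}{3}$ with $m=i-1$ gives $y$-coordinate $\tfrac{(i-1)i(i+1)h}{3}$, so the vertices are $\left(\tfrac{i(i+1)h}{2},\tfrac{(i-1)i(i+1)h}{3}\right)$ for $i=0,1,2,\dots$; for $i=0,1,2$ these are $(0,0),(h,0),(3h,2h)$, matching the statement.

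I do not expect a genuine obstacle here: the real content is entirely in Proposition \ref{618}, and what remains is the elementary identity $\sum_{j=1}^{m}j(j+1)=\tfrac{m(m+1)(m+2)}{3}$ (a one-line induction, or a telescoping via $j(j+1)=\tfrac13\bigl[j(j+1)(j+2)-(j-1)j(j+1)\bigr]$), together with the routine observation that a polygon built from a strictly increasing list of slopes breaks precisely at the cumulative multiplicities. The only care needed is the bookkeeping of the factor $h$ and the index shift $m=i-1$.
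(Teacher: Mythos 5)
Your proposal is correct and follows the paper's intended route: the paper's proof is just "this follows at once from Proposition \ref{618}", and your computation that $s(n)=n+1$ for $g=2$ (so slope $i$ has multiplicity $(i+1)h$) together with the partial sums $h\sum_{j=1}^{i}j=\tfrac{i(i+1)h}{2}$ and $h\sum_{j=1}^{i-1}j(j+1)=\tfrac{(i-1)i(i+1)h}{3}$ is exactly the bookkeeping being left implicit there.
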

	
	\begin{proof}
		This follows at once from the proof of Proposition \ref{618}.
	\end{proof}

	\begin{section}{\textbf{Slopes near the boundary}}\label{tab}
		
		\setcounter{subsection}{1}	
		From now on we restrict to the case of $F$ being a real quadratic field.
		\begin{nota}
			In this section we will use convention given in \ref{weightnota} for our weights. Using this we will denote arbitrary locally algebraic weights as $\k$ and if we want to specify the character we will denote them as $[k_1,k_2]\psi$ where $k_i \in \ZZ_{\geq 2}$ and paritious.
			
		\end{nota}	
		
		\begin{nota}\label{bidef}
			In what follows we will choose the ordering of our basis (cf. \ref{6.2.8} ) given by	
			$$Bi(a,b)=\frac{(a+b+1)(a+b)}{2}+b$$ and $Bi^{-1}(m)=\left (m-\frac{t(t+1)}{2},\frac{t(t+3)}{2}-m \right)$ where $t=\left \lfloor \frac{-1+\sqrt{1+8m}}{2} \right \rfloor$. Lastly, for $Bi^{-1}(m)=(m_1,m_2)$ we note that $b(m)=m_1+m_2=t$.  
		\end{nota}	
		\begin{rmrk}
			\added[id=h]{We choose this particular ordering since computationally it appears to be the one for which  slopes stabilize quickest. This is most likely due to the appearance of the term $b(x)$ in Corollary \ref{cor619} which controls how the valuations of the entries of $U_p$ increase as we move down the rows.} 
		\end{rmrk}

	\added[id=h]{In this section we collect some computations of slopes of $U_p$, for $p$ a split or inert prime. The computations done below were done in \texttt{Magma} \cite{magma} and \texttt{Sage} \cite{sage}.}
	
	\begin{warn}
		 From now on all our Hecke operators will normalized as in \ref{norma}. For this reason we will denote them simply by $U_p, U_\ps$ instead of $U_p^0,U_{\ps}^0$.
	\end{warn}

		\begin{defn}\label{slopenot}
			If $K$ is a local field and $A \in \Mat_{n,n}(K)$ is a matrix, then we define the Newton polygon of $A$ to be the Newton polygon of the characteristic polynomial of $A$ and \added[id=c]{denote it $\NP(A)$. This naturally defines a set $\S(A)$ of $sm$-pairs, given by the slopes and multiplicities.}
		\end{defn}

		\begin{warn}\label{wow}
			\added[id=h]{When computing slopes of overconvergent Hilbert modular forms our strategy is to compute a finite  matrix $U_p(R,\k)$ which is a $R \times R$ approximation to the infinite matrix of $U_p$ acting on weight $\k$ overconvergent Hilbert modular forms and then compute the slopes of $U_p(R,\k)$, which we call the {\it approximated overconvergent slopes} . The fact that $U_p$ is compact means that we can find a function $f$ such that,  any vertex of $\NP(U_p(f(R),\k))$ of valuation less than $R$, will also be a vertex of $\NP(U_p(M,\k))$ for $M \geq f(R)$. So we can guarantee that the approximated overconvergent slopes are actually slopes of overconvergent Hilbert modular forms. Note that $f$ depends on the ordering of the basis of the matrix. If we use $Bi$ as in \ref{6.2.8} to order the basis, then $\lfloor \frac{b(R)}{h} \rfloor$bounds $f(R)$ from below, where  $b(R)=\left \lfloor \frac{-1+\sqrt{1+8R}}{2} \right \rfloor$.}\footnote{This is most likely not the optimal bound.}  Throughout this section, when we talk about overconvergent slopes, we mean approximated overconvergent slopes. 
			
			In the classical case we do not have this problem and all of the slopes we have computed are actually slopes of classical Hilbert modular forms. 
		\end{warn}

		\begin{subsection}{\textbf{ Split case}}

			Let $F=\QQ(\sqrt{13})$ and $p=3$. We will compute the slopes of $U_3$ on the space of modular forms of  $\K_0(9)$ for weights near the boundary. We note here that $U_0(9)$ is sufficiently small, which we checked computationally. In this case we find that $h=12$, where $h$ is the class number of $(D,U)$ with $D/F$ totally definite with $\disc(D)=1$.  We let $\psi_r$ be a continuous character of $\OO_p^\times$ of conductor $9$ such that $\psi_r(\a)=\Norm(\a)^r$ for $\a \in \OO_F^\times$. In the following table we list the slopes of classical Hilbert modular forms as a $sm$-pair $(s,m)$ where $s$ is the slope and $m$ its multiplicity. Note that since $p$ splits, we have $U_{p}=U_{\ps_1} U_{\ps_2}$. We also record here the classical slopes of $U_{\ps_1}, U_{\ps_2}$.
			
			\noindent		\begin{longtabu}{|c | c| X| }
				\Xhline{2 pt}
				
				Operator & Weight &  Classical Slopes \\ \Xhline{2 pt}
				$U_p$ & $[2,2]\psi_2$ & $ (  0 , 1 ) $,
				$ (  1/2 , 2 ) $,
				$ (  1 , 6 ) $,
				$ (  3/2 , 2 ) $,
				$ (  2 , 1 ) $
				\\ \hline
				$U_{\ps_1}$ & $[2,2]\psi_2$ &  $(  0 , 3 ) $, $ (  1/2 , 6 ) $, $ (  1 , 3 )$					
				\\ \hline
				$U_{\ps_2}$ & $[2,2]\psi_2$ &$(  0 , 3 ) $, $ (  1/2 , 6 ) $, $ (  1 , 3 )$					
				\\ \Xhline{2 pt}
				$U_p$ & $[2,4]\psi_2$ & (0, 1),
				$ (  1/2 , 2 ) $,
				$ (  1 , 7 ) $,
				$ (  3/2 , 4 ) $,
				$ (  2 , 8 ) $,
				$ (  5/2 , 4 ) $,
				$ (  3 , 7 ) $,
				$ (  7/2 , 2 ) $,
				$ (  4 , 1 ) $
				\\ \hline
				$U_{\ps_1}$ & $[2,4]\psi_2$ & $(0, 9)$, $(1/2, 18)$, $(1, 9)$
				\\ \hline
				$U_{\ps_2}$ & $[2,4]\psi_2$ & $ (  0 , 3 ) $,
				$ (  1/2 , 6 ) $,
				$ (  1 , 6 ) $,
				$ (  3/2 , 6 ) $,
				$ (  2 , 6 ) $,
				$ (  5/2 , 6 ) $,
				$ (  3 , 3 ) $	 
				\\ \Xhline{2 pt}		
				$U_p$ & $[2,6]\psi_2$ &$(0, 1)$,
				$ (  1/2 , 2 ) $,
				$ (  1 , 7 ) $,
				$ (  3/2 , 4 ) $,
				$ (  2 , 8 ) $,
				$ (  5/2 , 4 ) $,
				$ (  3 , 8 ) $,
				$ (  7/2 , 4 ) $,
				$ (  4 , 8 ) $,
				$ (  9/2 , 4 ) $,
				$ (  5 , 7 ) $,
				$ (  11/2 , 2 ) $,
				$ (  6 , 1 ) $
				\\ \hline
				$U_{\ps_1}$ & $[2,6]\psi_2$ &$(0, 15)$, $(1/2, 30)$, $(1, 15)$
				\\ \hline
				$U_{\ps_2}$ & $[2,6]\psi_2$ & $ (  0 , 3 ) $,
				$ (  1/2 , 6 ) $,
				$ (  1 , 6 ) $,
				$ (  3/2 , 6 ) $,
				$ (  2 , 6 ) $,
				$ (  5/2 , 6 ) $,
				$ (  3 , 6 ) $,
				$ (  7/2 , 6 ) $,
				$ (  4 , 6 ) $,
				$ (  9/2 , 6 ) $,
				$ (  5 , 3 ) $
				\\ \Xhline{2 pt}	
				
				$U_p$ & $[2,8]\psi_2$ &$(0, 1)$, $ (  1/2 , 2 ) $,
				$ (  1 , 7 ) $,
				$ (  3/2 , 4 ) $,
				$ (  2 , 8 ) $,
				$ (  5/2 , 4 ) $,
				$ (  3 , 8 ) $,
				$ (  7/2 , 4 ) $,
				$ (  4 , 8 ) $,
				$ (  9/2 , 4 ) $,
				$ (  5 , 8 ) $,
				$ (  11/2 , 4 ) $,
				$ (  6 , 8 ) $,
				$ (  13/2 , 4 ) $,
				$ (  7 , 7 ) $,
				$ (  15/2 , 2 ) $,
				$ (  8 , 1 ) $
				\\ \hline
				$U_{\ps_1}$ & $[2,8]\psi_2$ & $(0, 21)$, $(1/2, 42)$, $(1, 21)$
				\\ \hline
				$U_{\ps_2}$ & $[2,8]\psi_2$ & 
				$ (  0 , 3 ) $,
				$ (  1/2 , 6 ) $,
				$ (  1 , 6 ) $,
				$ (  3/2 , 6 ) $,
				$ (  2 , 6 ) $,
				$ (  5/2 , 6 ) $,
				$ (  3 , 6 ) $,
				$ (  7/2 , 6 ) $,
				$ (  4 , 6 ) $,
				$ (  9/2 , 6 ) $,
				$ (  5 , 6 ) $,
				$ (  11/2 , 6 ) $,
				$ (  6 , 6 ) $,
				$ (  13/2 , 6 ) $,
				$ (  7 , 3 ) $
				\\ \Xhline{2 pt}	
				$U_p$ & $[4,4]\psi_2$ &$ (  0 , 1 ) $,
				$ (  1/2 , 2 ) $,
				$ (  1 , 8 ) $,
				$ (  3/2 , 6 ) $,
				$ (  2 , 16 ) $,
				$ (  5/2 , 10 ) $,
				$ (  3 , 22 ) $,
				$ (  7/2 , 10 ) $,
				$ (  4 , 16 ) $,
				$ (  9/2 , 6 ) $,
				$ (  5 , 8 ) $,
				$ (  11/2 , 2 ) $,
				$ (  6 , 1 ) $		  	 
				\\ \hline
				$U_{\ps_1}$ & $[4,4]\psi_2$ &$(0, 9)$, $(1/2, 18)$, $(1, 18)$, $(3/2, 18)$, $(2, 18)$, $(5/2, 18)$, $(3, 9)$
				\\ \hline
				$U_{\ps_2}$ & $[4,4]\psi_2$ & $(0, 9)$, $(1/2, 18)$, $(1, 18)$, $(3/2, 18)$, $(2, 18)$, $(5/2, 18)$, $(3, 9)$
				\\ \Xhline{2 pt}	
				$U_p$ & $[3,3]\psi_1$ &$ (  0 , 1 ) $,
				$ (  1/2 , 2 ) $,
				$ (  1 , 8 ) $,
				$ (  3/2 , 6 ) $,
				$ (  2 , 14 ) $,
				$ (  5/2 , 6 ) $,
				$ (  3 , 8 ) $,
				$ (  7/2 , 2 ) $,
				$ (  4 , 1 ) $
				\\ \hline
				$U_{\ps_1}$ & $[3,3]\psi_1$ &$(0, 6)$, $(1/2, 12)$, $(1, 12)$, $(3/2, 12)$, $(2, 6)$
				\\ \hline
				$U_{\ps_2}$ & $[3,3]\psi_1$ & $(0, 6)$, $(1/2, 12)$, $(1, 12)$, $(3/2, 12)$, $(2, 6)$
				\\ \Xhline{2 pt} 
				$U_p$ & $[3,5]\psi_1$ &$(0, 1)$,
				$ (  1/2 , 2 ) $,
				$ (  1 , 8 ) $,
				$ (  3/2 , 6 ) $,
				$ (  2 , 15 ) $,
				$ (  5/2 , 8 ) $,
				$ (  3 , 16 ) $,
				$ (  7/2 , 8 ) $,
				$ (  4 , 15 ) $,
				$ (  9/2 , 6 ) $,
				$ (  5 , 8 ) $,
				$ (  11/2 , 2 ) $,
				$ (  6 , 1 ) $
				\\ \hline
				$U_{\ps_1}$ & $[3,5]\psi_1$ &$(0, 12)$, $(1/2, 24)$, $(1, 24)$, $(3/2, 24)$, $(2, 12)$
				\\ \hline
				$U_{\ps_2}$ & $[3,5]\psi_1$ &$ (  0 , 6 ) $,
				$ (  1/2 , 12 ) $,
				$ (  1 , 12 ) $,
				$ (  3/2 , 12 ) $,
				$ (  2 , 12 ) $,
				$ (  5/2 , 12 ) $,
				$ (  3 , 12 ) $,
				$ (  7/2 , 12 ) $,
				$ (  4 , 6 ) $
				\\ \Xhline{2 pt} 
				\caption{Split case, $p=3$, classical slopes.}
			\end{longtabu}
			
			\added[id=h]{Since our level is sufficiently small it follows that, for $[k_1,k_2] \neq [2,2]$ the dimension of the spaces of classical Hilbert modular forms of weight $[k_1,k_2]\psi_i$ and level $\K_0(9)$ is $12\cdot(k_1-1)\cdot(k_2-1)$ (this follows from translating \ref{lem75} to the classical setting)}. For weight $[2,2]$ the dimension of the classical space of cusp forms is $11$, but since in our notation we are including the reduced norm forms (which in this case contribute a $1$-dimensional subspace), we get a $12$ dimensional space.	With this one can easily see that (as long as we order our basis correctly\footnote{\added[id=h]{This means we choose an ordering such that the first $\dim(S_\k(U))$ basis elements form a basis of $S_{\k}(U)$ (note that this ordering may differ from the one given by $Bi$ as chosen in \ref{bidef})}.}) in the table above, the classical slopes $U_p$ in weight $\k$ are given by the slopes of $U_{*}(R,\k)$ for $R=(k_1-1)(k_2-1)12$ and  $* \in \{p,\gothp_1,\gothp_2\}$.

			\added[id=h]{We now compute the  overconvergent  slopes for the same set of weights as in the classical case but we only compute slopes of $U_p$ since the $U_{\ps_i}$ are not compact operators}. Our computations suggest that for a fixed $R$, the multiset of slopes of $U_p(R,\k)$ does not depend on the weight $\k$.\footnote{\added[id=h]{Specifically, we computed the slopes for many weights and they were always the same.}} For this reason, in the table below we only record the size of the matrix and  the slopes.
			
			\begin{longtabu}{|c| X| }
				\Xhline{2 pt}
				Matrix & Overconvergent Slopes \\ \Xhline{2 pt}

				$U_p(20 \cdot 12)$ &  		    
				
				$ (  0 , 1 ) $,
				$ (  1/2 , 2 ) $,
				$ (  1 , 8 ) $,
				$ (  3/2 , 6 ) $,
				$ (  2 , 16 ) $,
				$ (  5/2 , 10 ) $,
				$ (  3 , 24 ) $,
				$ (  7/2 , 14 ) $,
				$ (  4 , 32 ) $,
				$ (  9/2 , 18 ) $,
				$ (  5 , 39 ) $,
				$ (  11/2 , 20 ) $,
				$ (  6 , 35 ) $,
				$ (  13/2 , 10 ) $,
				$ (  7 , 5 ) $
				\\ \hline

				$U_p(25 \cdot 12)$ &  		    
				$ (  0 , 1 ) $,
				$ (  1/2 , 2 ) $,
				$ (  1 , 8 ) $,
				$ (  3/2 , 6 ) $,
				$ (  2 , 16 ) $,
				$ (  5/2 , 10 ) $,
				$ (  3 , 24 ) $,
				$ (  7/2 , 14 ) $,
				$ (  4 , 32 ) $,
				$ (  9/2 , 18 ) $,
				$ (  5 , 40 ) $,
				$ (  11/2 , 22 ) $,
				$ (  6 , 45 ) $,
				$ (  13/2 , 20 ) $,
				$ (  7 , 30 ) $,
				$ (  15/2 , 8 ) $,
				$ (  8 , 4 ) $
				\\ \hline

				$U_p(30 \cdot 12)$ & $ (  0 , 1 ) $,
				$ (  1/2 , 2 ) $,
				$ (  1 , 8 ) $,
				$ (  3/2 , 6 ) $,
				$ (  2 , 16 ) $,
				$ (  5/2 , 10 ) $,
				$ (  3 , 24 ) $,
				$ (  7/2 , 14 ) $,
				$ (  4 , 32 ) $,
				$ (  9/2 , 18 ) $,
				$ (  5 , 40 ) $,
				$ (  11/2 , 22 ) $,
				$ (  6 , 48 ) $,
				$ (  13/2 , 26 ) $,
				$ (  7 , 50 ) $,
				$ (  15/2 , 18 ) $,
				$ (  8 , 19 ) $,
				$ (  17/2 , 4 ) $,
				$ (  9 , 2 ) $
				\\ \Xhline{2 pt} 
				\caption{Split case, $p=3$, overconvergent slopes}		  		    		    
			\end{longtabu}
			
			\begin{obs}\label{obsplit}
				\begin{enumerate}[$(1)$]
					\item The slopes are appearing in arithmetic progression which is very similar to what we see over $\QQ$.  		  
					\item The multiplicities are not the same for each slope and are increasing, which is something that one does not see over $\QQ$ (cf. \cite[Theorem 1.5]{eovb}).
					
					\item In the table of classical slopes one can observe the effect of the Atkin-Lehner involution. We know from the basic theory of Hilbert modular forms, that we can define an involution $W_{p}$ (see \cite{lw93}) and this will send a Hilbert modular form of slope $\a$ in $S_{k_{\psi}}(\K_{0}(9))$ to a form of slope \[\val_p(\Norm(p)^{k_0-1-v_p(k)})-\a,\]in $S_{k_{\psi^{-1}}}(\K_0(9))$,  where $k_0=\max(k_1,k_2)$ . Now in our example, $\psi$ and $\psi^{-1}$ are in the same Galois orbit, so the slopes in weight $k_{\psi}$ and $k_{\psi^{-1}}$ will be the same. From which one can deduce that in the classical slopes above one should be able to pair up the slopes appearing in weight $[k_1,k_2]$ so that the slopes add up to $\val_p(\Norm(p)^{k_0-1-v_p(k)}),$ which is the case\footnote{The appearance of $v_p(k)$ is due to our normalization of $U_p$.}. Moreover, one can define Atkin-Lehner involutions $W_{\ps_i}$ for $i \in \{1,2\}$ and make similar observations in these cases.
					
					\item In the classical case we have not only computed the slopes of $U_p$, but those of $U_{\ps_1},U_{\ps_2}$. Here \added[id=c]{it appears that if we fix} $k_1$ and let $k_2$ grow, then the slopes of $U_{\ps_1}$ only increase in multiplicity, but we do not gain any new slopes. On the other hand, for $U_{\ps_2}$ we see that as $k_2$ increases the we gain new slopes. Moreover, if we instead fix $k_2$ and increase $k_1$ we see analogous behaviour.

				\end{enumerate}
			\end{obs}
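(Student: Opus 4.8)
Parts $(1)$, $(2)$ and $(4)$ of the Observation are descriptions of the tables just produced: they record that the computed multisets have the displayed shape (arithmetic progressions of slopes; non‑constant, increasing multiplicities; and a specific growth pattern for the $U_{\ps_i}$‑slopes as one of $k_1,k_2$ varies). These are read off from the computation rather than assertions needing a separate argument; part $(4)$ is moreover consistent with the product structure in the variables $X_i$ visible in Corollary~\ref{cor714} together with the $h\times h$ block structure, since $\eta_{\ps_i}$ is trivial at the other prime, but I would not attempt a clean proof of it. The one part carrying a genuine argument is $(3)$, the reflection of the slope multiset under an Atkin--Lehner involution, and that is what the plan below addresses. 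The proof has two ingredients: an Atkin--Lehner operator $W_p$ (and $W_{\ps_i}$) relating weight $[k_1,k_2]\psi$ to weight $[k_1,k_2]\psi^{-1}$ and reflecting slopes about $\tfrac12\val_p(\Norm(p)^{k_0-1-v_p(k)})$; and the fact that $\psi$ and $\psi^{-1}$ are Galois conjugate, which forces the two slope multisets to coincide.

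\textbf{Construction of $W_p$.}
For $\ps\mid p$ let $\varpi_\ps\in D_f^\times$ be the idele which is the identity away from $\ps$ and, under the fixed isomorphism $D_p\cong M_2(F_p)$, equals $\left(\begin{smallmatrix}0&1\\-\pi_\ps^s&0\end{smallmatrix}\right)$ at $\ps$; set $\varpi_p=\prod_{\ps\mid p}\varpi_\ps$. One checks directly that $\varpi_p$ normalises $U_0(\nn p^s)$, that conjugation by $\varpi_p$ interchanges the two diagonal entries of an upper–triangular‑mod‑$p^s$ matrix (so it inverts the character read off by the nebentypus, up to the determinant twist already constrained by Remark~\ref{rk71}), and that $\varpi_p^2$ is a scalar idele, hence acts on weight‑$(\k_\psi,r)$ functions through the weight character. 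Consequently $f\mapsto\bigl(g\mapsto f(g\varpi_p)\cdot(\varpi_p)_p\bigr)$, normalised by an appropriate scalar, defines an isomorphism
$$W_p : S_{\k_\psi,r}^{D,\dagger}(U_0(\nn p^s)) \overset{\sim}{\lra} S_{\k_{\psi^{-1}},r}^{D,\dagger}(U_0(\nn p^s))$$
with $W_p\circ W_p$ a scalar; replacing $\varpi_p$ by $\varpi_\ps$ gives $W_\ps$. Compatibility with the decomposition of Lemma~\ref{lem75} (so that $W_p$ descends to the sufficiently‑small quotient) is immediate, since right translation commutes with the left $D^\times$‑action, so $W_p$ is a permutation‑plus‑twist of the summands $L\langle\unn{X}\rangle$.

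\textbf{Norm relation and conclusion.}
The heart of the matter is the commutation of $W_p$ with $U_p$. Using the double‑coset decomposition $[U\eta_\ps U]=\coprod_\a U\left(\begin{smallmatrix}\pi_\ps&0\\\a\pi_\ps^{s}&1\end{smallmatrix}\right)$, an explicit computation in $\GL_2(F_\ps)$ shows $\varpi_\ps\,[U\eta_\ps U]\,\varpi_\ps^{-1}=\pi_\ps^{s}\cdot[U\eta_\ps^{-1}U]$ up to elements of $U$; carrying the weight factors $H(\g_i,X_i,l_i)$ of Definition~\ref{ocdefn} through this conjugation produces exactly the scalar $\Norm(p)^{k_0-1}$ (before normalisation). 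Hence on a common eigenform $f$ with $U_pf=af$, $a\neq0$, the form $W_pf$ is a $U_p$‑eigenform with eigenvalue $\Norm(p)^{k_0-1}/a$, which after the normalisation of Definition~\ref{norma} becomes $\Norm(p)^{k_0-1-v_p(k)}/a$; taking $\val_p$ and using that $W_p$ is a bijection identifies the slope multiset in weight $[k_1,k_2]\psi$ with its reflection $s\mapsto\val_p(\Norm(p)^{k_0-1-v_p(k)})-s$ in weight $[k_1,k_2]\psi^{-1}$. Finally $\psi$ and $\psi^{-1}$ lie in one $\Gal(\ov{\QQ}/\QQ)$‑orbit, and an automorphism carrying $\psi$ to $\psi^{-1}$ induces a Hecke‑equivariant semilinear isomorphism between the two weight spaces; since $\val_p$ is Galois‑invariant the two slope multisets agree, and combining the two steps gives that the slope multiset in weight $[k_1,k_2]\psi$ is stable under $s\mapsto\val_p(\Norm(p)^{k_0-1-v_p(k)})-s$, which is assertion $(3)$; the same argument with $\varpi_{\ps_i}$ gives the statement for $U_{\ps_i}$. \textbf{The main obstacle} is the bookkeeping in that explicit coset computation: one has to keep the coset representatives, the local conjugation $\varpi_\ps(\cdot)\varpi_\ps^{-1}$, and the weight action $H(\g_i,X_i,l_i)$ aligned carefully enough to pin down the exponent $k_0-1$ (rather than some $k_i-1$ or a sum over $i$), since it is exactly that exponent which makes the reflection axis come out as claimed.
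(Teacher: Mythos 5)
Your proposal is correct and follows essentially the same route as the paper: Observation~\ref{obsplit} is presented as a reading of the computed tables, and its only argumentative part, $(3)$, is justified in the paper exactly by the two ingredients you use, namely the Atkin--Lehner involution $W_p$ (for which the paper simply cites \cite{lw93} rather than constructing it quaternionically) reflecting slopes about $\tfrac12\val_p(\Norm(p)^{k_0-1-v_p(k)})$, together with the Galois conjugacy of $\psi$ and $\psi^{-1}$ forcing the two slope multisets to coincide. Your explicit construction via $\varpi_\ps$ and the unnormalised exponent $k_0-1$ are consistent with the paper's weight conventions (the $\det^{v_i}$ twist of Notation~\ref{weightnota} and the normalisation of Definition~\ref{norma}) and with the tabulated slopes, so it just fills in details the paper leaves to the cited literature.
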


			We have also done a similar computation in the case when $F=\QQ(\sqrt{17})$, $p=2$ and level $\K_0(8)$ (here $h=24$). In this case the behaviour is similar to the above, so we only record the slopes for a few small weights. The characters  $\psi_i$ appearing in  table $3$ are of conductor $8$.
			
			\begin{longtabu}{|c|  c| X |}
				\Xhline{2 pt}
				
				Operator & Weight &  Classical slopes \\ \Xhline{2 pt}
				$U_p$ & $[2,2]\psi_2$ & $ (  0 , 1 ) $,
				$ (  1/2 , 4 ) $,
				$ (  1 , 14 ) $,
				$ (  3/2 , 4 ) $,
				$ (  2 , 1 ) $
				\\ \hline
				$U_{\ps_1}$ & $[2,2]\psi_2$ & $(0, 4)$, $(1/2, 16)$, $(1, 4)$
				\\ \hline
				$U_{\ps_2}$ & $[2,2]\psi_2$ & $(0, 4)$, $(1/2, 16)$, $(1, 4)$
				\\ \Xhline{2 pt}
				$U_p$ & $[4,2]\psi_2$ & $(0, 1),$
				$(1/2, 4),$
				$(1, 15),$
				$(3/2, 8),$
				$(2, 16),$
				$(5/2, 8),$
				$(3, 15),$
				$(7/2, 4),$
				$(4, 1)$
				\\ \hline
				$U_{\ps_1}$ & $[4,2]\psi_2$ &$(0, 4),$ $(1/2, 16),$ $(1, 8),$ $(3/2, 16),$ $(2, 8),$ $(5/2, 16),$ $(3, 4)$
				\\ \hline
				$U_{\ps_2}$ & $[4,2]\psi_2$ & $(0, 12)$, $(1/2, 48)$, $(1, 12)$
				\\ \Xhline{2 pt}
				$U_p$ & $[3,3]\psi_1$ &$(0, 1),$ $(1/2, 4),$ $(1, 16),$ $(3/2, 12),$ $(2, 30),$ $(5/2, 12),$ $(3, 16),$ $(7/2, 4),$ $(4, 1)$
				\\ \hline
				$U_{\ps_1}$ & $[3,3]\psi_1$ & {\fontfamily{lmr} \selectfont (0, 8), (1/2, 32), (1, 16), (3/2, 32), (2, 8)}
				\\ \hline
				$U_{\ps_2}$ & $[3,3]\psi_1$ & {\fontfamily{lmr} \selectfont (0, 8), (1/2, 32), (1, 16), (3/2, 32), (2, 8)}
				\\ \Xhline{2 pt}
				Operator & Size &  Overconvergent slopes \\ \Xhline{2 pt}
				
				$U_p$ & $10 \cdot 24$ &$ (  0 , 1 ) $,
				$ (  1/2 , 4 ) $,
				$ (  1 , 16 ) $,
				$ (  3/2 , 12 ) $,
				$ (  2 , 32 ) $,
				$ (  5/2 , 20 ) $,
				$ (  3 , 48 ) $,
				$ (  7/2 , 28 ) $,
				$ (  4 , 59 ) $,
				$ (  9/2 , 16 ) $,
				$ (  5 , 4 ) $
					\\ \hline
				$U_p$ & $20 \cdot 24$ & {\fontfamily{lmr} \selectfont (0, 1), (1/2, 4), (1, 16), (3/2, 12), (2, 32), (5/2, 20), (3, 48), (7/2, 28), (4, 64), (9/2, 36), (5, 79), (11/2, 40), (6, 75), (13/2, 20), (7, 5)}
				\\ \Xhline{2 pt}
				\caption{Split case, $p=2$, classical and overconvergent slopes}
			\end{longtabu}

		\end{subsection}

		\subsection{\textbf{ Partial slopes}}\label{partslopes}
		Since we are working in the split case, we have $U_p=U_{\ps_1}U_{\ps_2}=U_{\ps_2}U_{\ps_1}$. Therefore a $U_p$ slope $\l_p$ can decomposed as a pair $(\l_{\ps_1},\l_{\ps_2})$ where $\l_{\ps_i}$ is a slope of $U_{\ps_i}$ and $\l_p=\l_{\ps_1}+\l_{\ps_{2}}$.

		\subsubsection{\textbf{Classical partial slopes}}\label{cps}
		For level $U_0(9)$ and weights $[4,4],[2,2],[2,4],[2,6],[2,8]$, we plot the pairs $(\l_{\ps_1},\l_{\ps_2})$ together with the multiplicity with which they appear. For the figures in this section, the horizontal axis denotes the slopes of $U_{\ps_1}$ and the vertical axis the slopes of $U_{\ps_2}$. The numbers in the grid represent the multiplicity with which this pair appears.
		\begin{center}

			\begin{tikzpicture}

			\draw[step=1cm,very thin,dashed] (0,0) grid (6,6);	
			\node at (3,7) {\textbf{Weight} [4,4]};

			\node at (0+0.2,0+0.2) {\tiny 1};
			\node at (0+0.2,1+0.2) {\tiny 1};
			\node at (0+0.2,2+0.2) {\tiny 2};
			\node at (0+0.2,3+0.2) {\tiny 1};
			\node at (0+0.2,4+0.2) {\tiny 2};
			\node at (0+0.2,5+0.2) {\tiny 1};
			\node at (0+0.2,6+0.2) {\tiny 1};

			\node at (1+0.2,0+0.2) {\tiny 1};
			\node at (1+0.2,1+0.2) {\small 4};
			\node at (1+0.2,2+0.2) {\tiny 2};
			\node at (1+0.2,3+0.2) {\small 4};
			\node at (1+0.2,4+0.2) {\tiny 2};
			\node at (1+0.2,5+0.2) {\small 4};
			\node at (1+0.2,6+0.2) {\tiny 1};

			\node at (2+0.2,0+0.2) {\tiny 2};
			\node at (2+0.2,1+0.2) {\tiny 2};
			\node at (2+0.2,2+0.2) {\small 4};
			\node at (2+0.2,3+0.2) {\tiny 2};
			\node at (2+0.2,4+0.2) {\small 4};
			\node at (2+0.2,5+0.2) {\tiny 2};
			\node at (2+0.2,6+0.2) {\tiny 2};
			
			\node at (3+0.2,0+0.2) {\tiny 1};
			\node at (3+0.2,1+0.2) {\small 4};
			\node at (3+0.2,2+0.2) {\tiny 2};
			\node at (3+0.2,3+0.2) {\small 4};
			\node at (3+0.2,4+0.2) {\tiny 2};
			\node at (3+0.2,5+0.2) {\small 4};
			\node at (3+0.2,6+0.2) {\tiny 1};
			
			\node at (4+0.2,0+0.2) {\tiny 2};
			\node at (4+0.2,1+0.2) {\tiny 2};
			\node at (4+0.2,2+0.2) {\small 4};
			\node at (4+0.2,3+0.2) {\tiny 2};
			\node at (4+0.2,4+0.2) {\small 4};
			\node at (4+0.2,5+0.2) {\tiny 2};
			\node at (4+0.2,6+0.2) {\tiny 2};
			
			\node at (5+0.2,0+0.2) {\tiny 1};
			\node at (5+0.2,1+0.2) {\small 4};
			\node at (5+0.2,2+0.2) {\tiny 2};
			\node at (5+0.2,3+0.2) {\small 4};
			\node at (5+0.2,4+0.2) {\tiny 2};
			\node at (5+0.2,5+0.2) {\small 4};
			\node at (5+0.2,6+0.2) {\tiny 1};
			
			\node at (6+0.2,0+0.2) {\tiny 1};
			\node at (6+0.2,1+0.2) {\tiny 1};
			\node at (6+0.2,2+0.2) {\tiny 2};
			\node at (6+0.2,3+0.2) {\tiny 1};
			\node at (6+0.2,4+0.2) {\tiny 2};
			\node at (6+0.2,5+0.2) {\tiny 1};
			\node at (6+0.2,6+0.2) {\tiny 1};

			\draw[step=1cm,thick] (0,0) -- (0,6);
			
			\draw[step=1cm,thick] (0,0) -- (6,0);
			\foreach \x in {0,1,2,3,4,5,6}
			\foreach \y in {0,1,2,3,4,5,6}
			\draw [fill] (\x,\y) circle [radius=0.05];
			
			\draw (-0.4,1) node {\small 1/2};
			\draw (-0.4,2) node {\small 1};
			\draw (-0.4,3) node {\small 3/2};
			\draw (-0.4,4) node {\small 2};
			\draw (-0.4,5) node {\small 5/2};
			\draw (-0.4,6) node {\small 3};
			
			\draw (1,-0.4) node {\small 1/2};
			\draw (2,-0.4) node {\small 1};
			\draw (3,-0.4) node {\small 3/2};
			\draw (4,-0.4) node {\small 2};
			\draw (5,-0.4) node {\small 5/2};
			\draw (6,-0.4) node {\small 3};
			
			\draw (-0.2,0-0.2) node {\small 0};

			\end{tikzpicture}
			
		\end{center}
		
		\newpage
		
		\begin{tikzpicture}

		\foreach \x in {0,1,2}
		\foreach \y in {0,1,2}
		\draw [fill] (\x,\y) circle [radius=0.05];

		\node at (1+0.2,1+0.2) {\small 4};
		\node at (0+0.2,0+0.2) {\tiny 1};
		\node at (1+0.2,0+0.2) {\tiny 1};
		\node at (0+0.2,1+0.2) {\tiny 1};
		\node at (1+0.2,2+0.2) {\tiny 1};
		\node at (2+0.2,1+0.2) {\tiny 1};
		\node at (2+0.2,2+0.2) {\tiny 1};
		\node at (2+0.2,0+0.2) {\tiny 1};
		\node at (0+0.2,2+0.2) {\tiny 1};
		
		\node at (1,3) {\textbf{Weight} [2,2]};
		
		\draw[step=1cm,thick] (0,0) -- (0,2);
		
		\draw[step=1cm,thick] (0,0) -- (2,0);
		\draw[step=1cm,very thin,dashed] (0,0) grid (2,2);

		\draw (1,-0.4) node {\small 1/2};
		\draw (2,-0.4) node {\small 1};
		
		\draw (-0.2,0-0.2) node {\small 0};
		\draw (-0.4,1) node {\small 1/2};
		\draw (-0.4,2) node {\small 1};
		
		\hspace{4cm}
		\draw[step=1cm,very thin,dashed] (0,0) grid (2,6);	
		\draw (-0.4,1) node {\small 1/2};
		\draw (-0.4,2) node {\small 1};
		\draw (-0.4,3) node {\small 3/2};
		\draw (-0.4,4) node {\small 2};
		\draw (-0.4,5) node {\small 5/2};
		\draw (-0.4,6) node {\small 3};

		\draw (1,-0.4) node {\small 1/2};
		\draw (2,-0.4) node {\small 1};
		
		\draw (-0.2,0-0.2) node {\small 0};
		
		\node at (0+0.2,0+0.2) {\tiny 1};
		\node at (0+0.2,1+0.2) {\tiny 1};
		\node at (0+0.2,2+0.2) {\tiny 2};
		\node at (0+0.2,3+0.2) {\tiny 1};
		\node at (0+0.2,4+0.2) {\tiny 2};
		\node at (0+0.2,5+0.2) {\tiny 1};
		\node at (0+0.2,6+0.2) {\tiny 1};

		\node at (1+0.2,0+0.2) {\tiny 1};
		\node at (1+0.2,1+0.2) {\small 4};
		\node at (1+0.2,2+0.2) {\tiny 2};
		\node at (1+0.2,3+0.2) {\small 4};
		\node at (1+0.2,4+0.2) {\tiny 2};
		\node at (1+0.2,5+0.2) {\small 4};
		\node at (1+0.2,6+0.2) {\tiny 1};

		\node at (2+0.2,0+0.2) {\tiny 1};
		\node at (2+0.2,1+0.2) {\tiny 1};
		\node at (2+0.2,2+0.2) {\tiny 2};
		\node at (2+0.2,3+0.2) {\tiny 1};
		\node at (2+0.2,4+0.2) {\tiny 2};
		\node at (2+0.2,5+0.2) {\tiny 1};
		\node at (2+0.2,6+0.2) {\tiny 1};

		\node at (1,7) {\textbf{Weight} [2,4]};

		\draw[step=1cm,thick] (0,0) -- (0,6);
		
		\draw[step=1cm,thick] (0,0) -- (2,0);
		\foreach \x in {0,1,2}
		\foreach \y in {0,1,2,3,4,5,6}
		\draw [fill] (\x,\y) circle [radius=0.05];

		\hspace{4cm}
		\draw[step=1cm,very thin,dashed] (0,0) grid (2,10);	
		\node at (1,11) {\textbf{Weight} [2,6]};

		\node at (0+0.2,0+0.2) {\tiny 1};
		\node at (0+0.2,1+0.2) {\tiny 1};
		\node at (0+0.2,2+0.2) {\tiny 2};
		\node at (0+0.2,3+0.2) {\tiny 1};
		\node at (0+0.2,4+0.2) {\tiny 2};
		\node at (0+0.2,5+0.2) {\tiny 1};
		\node at (0+0.2,6+0.2) {\tiny 2};
		\node at (0+0.2,7+0.2) {\tiny 1};
		\node at (0+0.2,8+0.2) {\tiny 2};
		\node at (0+0.2,9+0.2) {\tiny 1};
		\node at (0+0.2,10+0.2) {\tiny 1};

		\node at (1+0.2,0+0.2) {\tiny 1};
		\node at (1+0.2,1+0.2) {\small 4};
		\node at (1+0.2,2+0.2) {\tiny 2};
		\node at (1+0.2,3+0.2) {\small 4};
		\node at (1+0.2,4+0.2) {\tiny 2};
		\node at (1+0.2,5+0.2) {\small 4};
		\node at (1+0.2,6+0.2) {\tiny 2};
		\node at (1+0.2,7+0.2) {\small 4};
		\node at (1+0.2,8+0.2) {\tiny 2};
		\node at (1+0.2,9+0.2) {\small 4};
		\node at (1+0.2,10+0.2) {\tiny 1};

		\node at (2+0.2,0+0.2) {\tiny 1};
		\node at (2+0.2,1+0.2) {\tiny 1};
		\node at (2+0.2,2+0.2) {\tiny 2};
		\node at (2+0.2,3+0.2) {\tiny 1};
		\node at (2+0.2,4+0.2) {\tiny 2};
		\node at (2+0.2,5+0.2) {\tiny 1};
		\node at (2+0.2,6+0.2) {\tiny 2};
		\node at (2+0.2,7+0.2) {\tiny 1};
		\node at (2+0.2,8+0.2) {\tiny 2};
		\node at (2+0.2,9+0.2) {\tiny 1};
		\node at (2+0.2,10+0.2) {\tiny 1};

		\draw[step=1cm,thick] (0,0) -- (0,10);
		
		\draw[step=1cm,thick] (0,0) -- (2,0);
		\foreach \x in {0,1,2}
		\foreach \y in {0,1,2,3,4,5,6,7,8,9,10}
		\draw [fill] (\x,\y) circle [radius=0.05];
		
		\draw (-0.4,1) node {\small 1/2};
		\draw (-0.4,2) node {\small 1};
		\draw (-0.4,3) node {\small 3/2};
		\draw (-0.4,4) node {\small 2};
		\draw (-0.4,5) node {\small 5/2};
		\draw (-0.4,6) node {\small 3};
		\draw (-0.4,7) node {\small 7/2};
		\draw (-0.4,8) node {\small 4};
		\draw (-0.4,9) node {\small 9/2};
		\draw (-0.4,10) node {\small 5};

		\draw (1,-0.4) node {\small 1/2};
		\draw (2,-0.4) node {\small 1};
		
		\draw (-0.2,0-0.2) node {\small 0};
		
		\hspace{4cm}
		\draw[step=1cm,very thin,dashed] (0,0) grid (2,14);	
		\node at (1,15) {\textbf{Weight} [2,8]};

		\node at (0+0.2,0+0.2) {\tiny 1};
		\node at (0+0.2,1+0.2) {\tiny 1};
		\node at (0+0.2,2+0.2) {\tiny 2};
		\node at (0+0.2,3+0.2) {\tiny 1};
		\node at (0+0.2,4+0.2) {\tiny 2};
		\node at (0+0.2,5+0.2) {\tiny 1};
		\node at (0+0.2,6+0.2) {\tiny 2};
		\node at (0+0.2,7+0.2) {\tiny 1};
		\node at (0+0.2,8+0.2) {\tiny 2};
		\node at (0+0.2,9+0.2) {\tiny 1};
		\node at (0+0.2,10+0.2) {\tiny 2};
		\node at (0+0.2,11+0.2) {\tiny 1};
		\node at (0+0.2,12+0.2) {\tiny 2};
		\node at (0+0.2,13+0.2) {\tiny 1};
		\node at (0+0.2,14+0.2) {\tiny 1};

		\node at (1+0.2,0+0.2) {\tiny 1};
		\node at (1+0.2,1+0.2) {\small 4};
		\node at (1+0.2,2+0.2) {\tiny 2};
		\node at (1+0.2,3+0.2) {\small 4};
		\node at (1+0.2,4+0.2) {\tiny 2};
		\node at (1+0.2,5+0.2) {\small 4};
		\node at (1+0.2,6+0.2) {\tiny 2};
		\node at (1+0.2,7+0.2) {\small 4};
		\node at (1+0.2,8+0.2) {\tiny 2};
		\node at (1+0.2,9+0.2) {\small 4};
		\node at (1+0.2,10+0.2) {\tiny 2};
		\node at (1+0.2,11+0.2) {\small 4};
		\node at (1+0.2,12+0.2) {\tiny 2};
		\node at (1+0.2,13+0.2) {\small 4};
		\node at (1+0.2,14+0.2) {\tiny 1};

		\node at (2+0.2,0+0.2) {\tiny 1};
		\node at (2+0.2,1+0.2) {\tiny 1};
		\node at (2+0.2,2+0.2) {\tiny 2};
		\node at (2+0.2,3+0.2) {\tiny 1};
		\node at (2+0.2,4+0.2) {\tiny 2};
		\node at (2+0.2,5+0.2) {\tiny 1};
		\node at (2+0.2,6+0.2) {\tiny 2};
		\node at (2+0.2,7+0.2) {\tiny 1};
		\node at (2+0.2,8+0.2) {\tiny 2};
		\node at (2+0.2,9+0.2) {\tiny 1};
		\node at (2+0.2,10+0.2) {\tiny 2};
		\node at (2+0.2,11+0.2) {\tiny 1};
		\node at (2+0.2,12+0.2) {\tiny 2};
		\node at (2+0.2,13+0.2) {\tiny 1};
		\node at (2+0.2,14+0.2) {\tiny 1};

		\draw[step=1cm,thick] (0,0) -- (0,14);
		
		\draw[step=1cm,thick] (0,0) -- (2,0);
		\foreach \x in {0,1,2}
		\foreach \y in {0,1,2,3,4,5,6,7,8,9,10,11,12,13,14}
		\draw [fill] (\x,\y) circle [radius=0.05];
		
		\draw (-0.44,1) node {\small 1/2};
		\draw (-0.44,2) node {\small 1};
		\draw (-0.44,3) node {\small 3/2};
		\draw (-0.44,4) node {\small 2};
		\draw (-0.44,5) node {\small 5/2};
		\draw (-0.44,6) node {\small 3};
		\draw (-0.44,7) node {\small 7/2};
		\draw (-0.44,8) node {\small 4};
		\draw (-0.44,9) node {\small 9/2};
		\draw (-0.44,10) node {\small 5};
		\draw (-0.44,11) node {\small 11/2};
		\draw (-0.44,12) node {\small 6};
		\draw (-0.44,13) node {\small 13/2};
		\draw (-0.44,14) node {\small 7};

		\draw (1,-0.4) node {\small 1/2};
		\draw (2,-0.4) node {\small 1};
		
		\draw (-0.2,0-0.2) node {\small 0};

		\useasboundingbox (15,0);

		\end{tikzpicture}

		We draw similar figures for $p=2$ in level $U_0(8)$ which give:
		
		\begin{tikzpicture}

		\foreach \x in {0,1,2}
		\foreach \y in {0,1,2}
		\draw [fill] (\x,\y) circle [radius=0.05];
		
		\node at (1+0.2,1+0.2) {\small 12};
		\node at (0+0.2,0+0.2) {\tiny 1};
		\node at (1+0.2,0+0.2) {\tiny 2};
		\node at (0+0.2,1+0.2) {\tiny 2};
		\node at (1+0.2,2+0.2) {\tiny 2};
		\node at (2+0.2,1+0.2) {\tiny 2};
		\node at (2+0.2,2+0.2) {\tiny 1};
		\node at (2+0.2,0+0.2) {\tiny 1};
		\node at (0+0.2,2+0.2) {\tiny 1};
		
		\node at (4,1) {\textbf{Weight} [2,2]};
		
		\draw[step=1cm,thick] (0,0) -- (0,2);
		
		\draw[step=1cm,thick] (0,0) -- (2,0);
		\draw[step=1cm,very thin,dashed] (0,0) grid (2,2);

		\draw (1,-0.4) node {\small 1/2};
		\draw (2,-0.4) node {\small 1};
		
		\draw (-0.2,0-0.2) node {\small 0};
		\draw (-0.4,1) node {\small 1/2};
		\draw (-0.4,2) node {\small 1};

		\end{tikzpicture}
		
		\begin{tikzpicture}

		\draw[step=1cm,very thin,dashed] (0,0) grid (6,2);	
		\draw (1,-0.4) node {\small 1/2};
		\draw (2,-0.4) node {\small 1};
		\draw (3,-0.4) node {\small 3/2};
		\draw (4,-0.4) node {\small 2};
		\draw (5,-0.4) node {\small 5/2};
		\draw (6,-0.4) node {\small 3};

		\draw (-0.4,1) node {\small 1/2};
		\draw (-0.4,2) node {\small 1};
		
		\draw (-0.2,0-0.2) node {\small 0};
		
		\node at (0+0.2,0+0.2) {\tiny 1};
		\node at (1+0.2,0+0.2) {\tiny 2};
		\node at (2+0.2,0+0.2) {\tiny 2};
		\node at (3+0.2,0+0.2) {\tiny 2};
		\node at (4+0.2,0+0.2) {\tiny 2};
		\node at (5+0.2,0+0.2) {\tiny 2};
		\node at (6+0.2,0+0.2) {\tiny 1};

		\node at (0+0.2,1+0.2) {\tiny 2};
		\node at (1+0.2,1+0.2) {\small 12};
		\node at (2+0.2,1+0.2) {\small 4};
		\node at (3+0.2,1+0.2) {\small 12};
		\node at (4+0.2,1+0.2) {\small 4};
		\node at (5+0.2,1+0.2) {\small 12};
		\node at (6+0.2,1+0.2) {\tiny 2};

		\node at (0+0.2,2+0.2) {\tiny 1};
		\node at (1+0.2,2+0.2) {\tiny 2};
		\node at (2+0.2,2+0.2) {\tiny 2};
		\node at (3+0.2,2+0.2) {\tiny 2};
		\node at (4+0.2,2+0.2) {\tiny 2};
		\node at (5+0.2,2+0.2) {\tiny 2};
		\node at (6+0.2,2+0.2) {\tiny 1};

		\node at (8,1) {\textbf{Weight} [4,2]};

		\draw[step=1cm,thick] (0,0) -- (0,2);
		
		\draw[step=1cm,thick] (0,0) -- (6,0);
		\foreach \x in {0,1,2}
		\foreach \y in {0,1,2,3,4,5,6}
		\draw [fill] (\y,\x) circle [radius=0.05];

		\end{tikzpicture}
		
		\begin{tikzpicture}
		
		\draw[step=1cm,very thin,dashed] (0,0) grid (10,2);	
		\draw (1,-0.4) node {\small 1/2};
		\draw (2,-0.4) node {\small 1};
		\draw (3,-0.4) node {\small 3/2};
		\draw (4,-0.4) node {\small 2};
		\draw (5,-0.4) node {\small 5/2};
		\draw (6,-0.4) node {\small 4};
		\draw (7,-0.4) node {\small 9/2};
		\draw (8,-0.4) node {\small 5};
		\draw (9,-0.4) node {\small 11/2};
		\draw (10,-0.4) node {\small 6};

		\draw (-0.4,1) node {\small 1/2};
		\draw (-0.4,2) node {\small 1};
		
		\draw (-0.2,0-0.2) node {\small 0};
		
		\node at (0+0.2,0+0.2) {\tiny 1};
		\node at (1+0.2,0+0.2) {\tiny 2};
		\node at (2+0.2,0+0.2) {\tiny 2};
		\node at (3+0.2,0+0.2) {\tiny 2};
		\node at (4+0.2,0+0.2) {\tiny 2};
		\node at (5+0.2,0+0.2) {\tiny 2};
		\node at (6+0.2,0+0.2) {\tiny 2};
		\node at (7+0.2,0+0.2) {\tiny 2};
		\node at (8+0.2,0+0.2) {\tiny 2};
		\node at (9+0.2,0+0.2) {\tiny 2};
		\node at (10+0.2,0+0.2) {\tiny 1};

		\node at (0+0.2,1+0.2) {\tiny 2};
		\node at (1+0.2,1+0.2) {\small 12};
		\node at (2+0.2,1+0.2) {\small 4};
		\node at (3+0.2,1+0.2) {\small 12};
		\node at (4+0.2,1+0.2) {\small 4};
		\node at (5+0.2,1+0.2) {\small 12};
		\node at (6+0.2,1+0.2) {\small 4};
		\node at (7+0.2,1+0.2) {\small 12};
		\node at (8+0.2,1+0.2) {\small 4};
		\node at (9+0.2,1+0.2) {\small 12};
		\node at (10+0.2,1+0.2) {\tiny 2};

		\node at (0+0.2,2+0.2) {\tiny 1};
		\node at (1+0.2,2+0.2) {\tiny 2};
		\node at (2+0.2,2+0.2) {\tiny 2};
		\node at (3+0.2,2+0.2) {\tiny 2};
		\node at (4+0.2,2+0.2) {\tiny 2};
		\node at (5+0.2,2+0.2) {\tiny 2};
		\node at (6+0.2,2+0.2) {\tiny 2};
		\node at (7+0.2,2+0.2) {\tiny 2};
		\node at (8+0.2,2+0.2) {\tiny 2};
		\node at (9+0.2,2+0.2) {\tiny 2};
		\node at (10+0.2,2+0.2) {\tiny 1};

		\node at (12,1) {\textbf{Weight} [6,2] };

		\draw[step=1cm,thick] (0,0) -- (0,2);
		
		\draw[step=1cm,thick] (0,0) -- (10,0);
		\foreach \x in {0,1,2}
		\foreach \y in {0,1,2,3,4,5,6,7,8,9,10}
		\draw [fill] (\y,\x) circle [radius=0.05];

		\end{tikzpicture}

		\begin{tikzpicture}
		
		\draw[step=1cm,very thin,dashed] (0,0) grid (4,4);	
		\node at (7,2) {\textbf{Weight} [3,3] };

		\node at (0+0.2,0+0.2) {\tiny 1};
		\node at (0+0.2,1+0.2) {\tiny 2};
		\node at (0+0.2,2+0.2) {\tiny 2};
		\node at (0+0.2,3+0.2) {\tiny 2};
		\node at (0+0.2,4+0.2) {\tiny 1};

		\node at (1+0.2,0+0.2) {\tiny 2};
		\node at (1+0.2,1+0.2) {\small 12};
		\node at (1+0.2,2+0.2) {\small 4};
		\node at (1+0.2,3+0.2) {\small 12};
		\node at (1+0.2,4+0.2) {\tiny 2};

		\node at (2+0.2,0+0.2) {\tiny 2};
		\node at (2+0.2,1+0.2) {\small 4};
		\node at (2+0.2,2+0.2) {\small 4};
		\node at (2+0.2,3+0.2) {\small 4};
		\node at (2+0.2,4+0.2) {\tiny 2};

		\node at (3+0.2,0+0.2) {\tiny 2};
		\node at (3+0.2,1+0.2) {\small 12};
		\node at (3+0.2,2+0.2) {\small 4};
		\node at (3+0.2,3+0.2) {\small 12};
		\node at (3+0.2,4+0.2) {\tiny 2};

		\node at (4+0.2,0+0.2) {\tiny 1};
		\node at (4+0.2,1+0.2) {\tiny 2};
		\node at (4+0.2,2+0.2) {\tiny 2};
		\node at (4+0.2,3+0.2) {\tiny 2};
		\node at (4+0.2,4+0.2) {\tiny 1};

		\draw[step=1cm,thick] (0,0) -- (0,4);
		
		\draw[step=1cm,thick] (0,0) -- (4,0);
		\foreach \x in {0,1,2,3,4}
		\foreach \y in {0,1,2,3,4}
		\draw [fill] (\x,\y) circle [radius=0.05];
		
		\draw (-0.4,1) node {\small 1/2};
		\draw (-0.4,2) node {\small 1};
		\draw (-0.4,3) node {\small 3/2};
		\draw (-0.4,4) node {\small 2};

		\draw (1,-0.4) node {\small 1/2};
		\draw (2,-0.4) node {\small 1};
		\draw (3,-0.4) node {\small 3/2};
		\draw (4,-0.4) node {\small 2};

		\draw (-0.2,0-0.2) node {\small 0};
		\end{tikzpicture}
		
	\added[id=c]{ Very recent work of Newton and Johansson sheds some light onto why, the slopes for $U_{\ps}$ have such behaviour as observed in \ref{obsplit}(4).}

		\begin{num}
			\added[id=h]{Since we are in the classical case there is no problem in computing the slopes of $U_{\ps_1},U_{\ps_2}$, from which we can construct the above figures as follows: thinking of the multiplicities as variables $(x_{i,j})$, the slopes of $U_{\ps_1}^aU_{\ps_2}^b$ for varying $a,b$ give us linear equations in $(x_{i,j})$ which one can try to solve. For example, knowing that in weight $[2,2]\psi_2$ and level $U_0(9)$ the operator $U_{\ps_1}$ has slopes $[(  0 , 3 ) , (  1/2 , 6 ) ,  (  1 , 3 )]$	tells us that in the corresponding figure adding the multiplicities along each column should give $3,6,3$ respectively. Furthermore, the Atkin-Lehner involutions $W_p,W_{\ps_i}$ give extra symmetries in the multiplicities, e.g., $W_p$ sends the pair $$(\l_{\ps_1},\l_{\ps_{2}}) \longmapsto \left (k_0-1-\l_{\ps_1}-v_{\gothp_1}(k),k_0-1-\l_{\ps_{2}}-v_{\ps_2}(k)\right )$$ which combined give us enough equations to uniquely determine the multiplicities (for the weights in the above figure).}

		\end{num}

		\begin{obs}
			We note that in both examples above, the pictures appear to built up from the weight $[2,2]$ picture, by `glueing' along the edges and adding up the multiplicities along the edges. 
		\end{obs}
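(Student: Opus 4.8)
The plan is first to make the word ``glueing'' precise. Fix the level $U_0(\nn p^s)$ and, for a classical weight $[k_1,k_2]\psi$, write $N_{[k_1,k_2]}$ for the array whose $(\alpha,\beta)$-entry records how many classical eigenforms have $U_{\ps_1}$-slope $\alpha$ and $U_{\ps_2}$-slope $\beta$; these are the arrays drawn in \ref{cps}. For $m\ge 1$ let $T^m_j$ denote the operation that lays down $m$ copies of an array in the $j$-th coordinate direction so that consecutive copies overlap in a single row (when $j=1$) or column (when $j=2$), adding the entries on each overlap; the operations $T^m_1$ and $T^m_2$ commute. The observation then becomes the identity
\[ N_{[k_1,k_2]} \;=\; T^{k_1-1}_1\,T^{k_2-1}_2\,N_{[2,2]} , \]
and I would immediately record the two consequences that must hold en route: the one-variable slope multisets $M^{(i)}_{[k_1,k_2]}$ of $U_{\ps_i}$ in weight $[k_1,k_2]\psi$ satisfy $M^{(1)}_{[k_1,k_2]}=(k_2-1)\,T^{k_1-1}(M^{(1)}_{[2,2]})$ and $M^{(2)}_{[k_1,k_2]}=(k_1-1)\,T^{k_2-1}(M^{(2)}_{[2,2]})$, which one checks against the tabulated $U_{\ps_i}$-slopes.

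The main reduction I would make is this. Because $p$ splits, $U_{\ps_1}$ and $U_{\ps_2}$ commute on the classical space $\bigoplus_{i=1}^h V_{n}(\unn{X})$, and the joint datum $N_{[k_1,k_2]}$ is recovered from the slopes of finitely many of the products $U_{\ps_1}^aU_{\ps_2}^b$ --- this is the linear reconstruction described in the paragraph preceding this observation, run with enough exponents $(a,b)$. Since $N_{[k_1,k_2]}$ and the right-hand side above are finitely-supported arrays with bounded entries, they agree as soon as their projections onto every line $a\alpha+b\beta=\mathrm{const}$ agree; the projection of the left-hand side is exactly the slope multiset of $U_{\ps_1}^aU_{\ps_2}^b$, so it is enough to compute those multisets and match them against the projections of the tiling. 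I would compute them by induction on $k_1+k_2$: the base case $[2,2]$ is the tabulated computation, and is where the local representations at $\ps_1$ and $\ps_2$ are ramified so that no $p$-stabilization freedom is left and $N_{[2,2]}$ is correspondingly small; the inductive step compares weight $[k_1,k_2]$ with $[k_1-2,k_2]$ (respectively $[k_1,k_2-2]$) through the $\ps_1$-adic (respectively $\ps_2$-adic) family passing through those two weights. The factorization of the generating function of Proposition \ref{prop:7.1.8} over the two places, combined with Lemma \ref{lem75}, is what lets $U_{\ps_1}$ and $U_{\ps_2}$ be treated one tensor slot at a time in this induction, while the extreme slopes and their multiplicities are controlled by the valuation bound of Corollary \ref{cor619} (which confines the $U_{\ps_i}$-slopes to $[0,k_i-1]$) together with the Atkin--Lehner symmetries $W_{\ps_i}$ recalled in \ref{obsplit}(3) and the paragraph preceding this observation, which reflect each $U_{\ps_i}$-slope sequence about a fixed value.

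The step I expect to be the real obstacle is the inductive accounting of the \emph{interior} multiplicities, and in particular the ``add along the seam'' doubling. Corollary \ref{cor714} gives sharp lower bounds for the $p$-adic valuations of the entries of $U_{\ps_i}$ but not the exact Newton polygon, so it pins down the two ends of each one-variable slope sequence but not its middle, and an unconditional proof of the doubling pattern seems to need finer input: a Hilbert analogue of the Bergdall--Pollack ghost series (cf.\ \cite{bergp}), or the recent results of Newton and Johansson on the $U_{\ps}$-slopes of classical Hilbert modular forms alluded to above. For the finitely many weights actually drawn in \ref{cps} this difficulty does not arise --- there the arrays $N_{[k_1,k_2]}$ have already been computed, and the observation is the finite verification that each of them equals the corresponding overlapping tiling of $N_{[2,2]}$ --- so the argument above should be read as the route to a statement valid for every $[k_1,k_2]$.
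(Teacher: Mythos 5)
Your proposal is in substance the paper's own justification: the Observation is an empirical claim about the finitely many computed arrays, and it is established exactly as in your closing paragraph, by computing each array (from the slopes of $U_{\ps_1}^aU_{\ps_2}^b$ together with the Atkin--Lehner symmetries, as described in the paragraph preceding the Observation) and checking directly that it coincides with the overlapping tiling of the weight $[2,2]$ array. Your inductive sketch towards a statement valid for all weights goes beyond what is asserted --- the paper deliberately leaves that as a Question --- and you rightly flag the interior multiplicities as the unproved step there, so no gap arises for the statement actually made.
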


		\begin{question} For (arithmetic) weights near the boundary, are the above multiplicities all ways greater than $0$? In other words, given eigenforms $f_i$ for $U_{\ps_i}$ with eigenvalues $\a_i$, does there exist an eigenform for $U_p$ with eigenvalue $\a_1 \a_2 \cdots \a_f$.
		\end{question}
		
		\begin{question} Can we obtain the picture above for any weight near the boundary, by simply glueing the picture in weight $[2,2]$? 
		\end{question}

		\subsubsection{\textbf{ Overconvergent partial slopes}}\label{ops}

		Now, since we are in the overconvergent case one cannot directly compute the slopes of $U_{\ps_i}$ since these are not compact operators. Instead one can compute  the successive slopes of $U_p U_{\ps_i}^n$ (which are compact operators) for $n \ge 0$. From this one can obtain slopes of $U_{\ps_i}$ as follows: let $R \gg 0$ , $P_n(R,\k):=(U_p U_{\ps_i}^n)(R,\k)$ (with the same notation as in \ref{wow}). Let us for the moment consider $\S(P_{0}(R,\k))$ as a multiset of slopes in the obvious way.  Now, for each $s \in \S(P_{0}(R,\k))$, let \[T(s) =\bigcap_{n=1}^{J(s)} \left \{ (t-s)/n \mid t \in \S(P_n(R,\k))   \right \}\] where $J(s) \gg 0$ such that the intersection stabilizes (such a $J(s)$ always exists). Then (for large enough $R$) \[\underset{s \in \S(P_{0}(R,\k))}{\bigcup}T(s) \subset \S(U_{\ps_i})\] which is what we want.

		Alternatively, while $U_{\ps_i}$ are not compact operators on the spaces of overconvergent Hilbert modular forms, one can restrict them to subspaces on which they act as compact operators. To see this, let $L(n,m)$ denote the subspace of $L \langle X,Y \rangle$ generated by $X^i Y^j$ for $i \in \{0,\dots,n\}$ and $j \in \{0,\dots,m\}$ where $n,m \in \ZZ_{\geq 0} \cup \{\infty\}$ ( note that $L(\infty,\infty)=L \langle X,Y \rangle$). Then for $m \in \ZZ_{\geq 0}$,  \added[id=h]{$\k=[\k_1,\k_2]\chi$} a weight with $k_2=m+2$ and $k_1$ arbitrary (with the same parity as $k_2$), the subspace \[\bigoplus_{i=1}^h L(\infty,m) \subset \bigoplus_{i=1}^h L \langle X,Y \rangle \cong S_{\k}^{D,\dag}(U)\] is  for a fixed under the $|_\k$ action of Hecke operators and $U_{\ps_1}$ acts compactly (this can be seen from  Corollary \ref{cor714}) on this subspace. Similarly $U_{\ps_2}$ is compact on $\bigoplus_{i=1}^h L(n,\infty)$ for a fixed $n \in \ZZ_{\geq 0}$ and weights \added[id=h]{$\k=[n+2,k_2]\chi$}. From this one can compute subsets of $\S(U_{\ps_i})$.

		Using this we compute some overconvergent slopes of $U_{\ps_i}$ in weight $[2,2]\psi_2$ and level $U_0(9)$ acting on $\oplus_i L(0,8) \subset \oplus_i L(0,\infty)$ and $\oplus_i L(8,0) \subset \oplus_i L(\infty,0)$. We only show this for $U_{\ps_1}$ on $\oplus_i L(8,0)$ since the picture for $U_{\ps_2}$ on $\oplus_i L(8,0)$ is the same but flipped vertically. Note that $\oplus_i L(0,0) \cong S_{2}(U)$.
		
		\resizebox{\textwidth}{!}{
			\hspace{-0.5cm}
			\begin{tikzpicture}
			\draw[step=1cm,very thin,dashed] (0,0) grid (18,2);	
			\draw[step=1cm,very thin] (0,0) grid (2,2);	
			\draw (1,-0.4) node {\small 1/2};
			\draw (2,-0.4) node {\small 1};
			\draw (3,-0.4) node {\small 3/2};
			\draw (4,-0.4) node {\small 2};
			\draw (5,-0.4) node {\small 5/2};
			\draw (6,-0.4) node {\small 3};
			\draw (7,-0.4) node {\small 7/2};
			\draw (8,-0.4) node {\small 4};
			\draw (9,-0.4) node {\small 9/2};
			\draw (10,-0.4) node {\small 5};
			\draw (11,-0.4) node {\small 11/2};
			\draw (12,-0.4) node {\small 6};
			\draw (13,-0.4) node {\small 13/2};
			\draw (14,-0.4) node {\small 7};
			\draw (15,-0.4) node {\small 15/2};
			\draw (16,-0.4) node {\small 8};
			\draw (17,-0.4) node {\small 17/2};
			\draw (18,-0.4) node {\small 9};

			\draw (-0.4,1) node {\small 1/2};
			\draw (-0.4,2) node {\small 1};
			
			\draw (-0.2,0-0.2) node {\small 0};
			
			\node at (0+0.2,0+0.2) {\tiny 1};
			\node at (1+0.2,0+0.2) {\tiny 1};
			\node at (2+0.4,0+0.2) {\tiny 1+1};
			\node at (3+0.2,0+0.2) {\tiny 1};
			\node at (4+0.2,0+0.2) {\tiny 2};
			\node at (5+0.2,0+0.2) {\tiny 1};
			\node at (6+0.2,0+0.2) {\tiny 2};
			\node at (7+0.2,0+0.2) {\tiny 1};
			\node at (8+0.2,0+0.2) {\tiny 2};
			\node at (9+0.2,0+0.2) {\tiny 1};
			\node at (10+0.2,0+0.2) {\tiny 2};
			\node at (11+0.2,0+0.2) {\tiny 1};
			\node at (12+0.2,0+0.2) {\tiny 2};
			\node at (13+0.2,0+0.2) {\tiny 1};
			\node at (14+0.2,0+0.2) {\tiny 2};
			\node at (15+0.2,0+0.2) {\tiny 1};
			\node at (16+0.2,0+0.2) {\tiny 2};
			\node at (17+0.2,0+0.2) {\tiny 1};
			\node at (18+0.2,0+0.2) {\tiny 1};

			\node at (0+0.2,1+0.2) {\tiny 1};
			\node at (1+0.2,1+0.2) {\small 4};
			\node at (2+0.4,1+0.2) {\tiny 1+1};
			\node at (3+0.2,1+0.2) {\small 4};
			\node at (4+0.2,1+0.2) {\tiny 2};
			\node at (5+0.2,1+0.2) {\small 4};
			\node at (6+0.2,1+0.2) {\tiny 2};
			\node at (7+0.2,1+0.2) {\small 4};
			\node at (8+0.2,1+0.2) {\tiny 2};
			\node at (9+0.2,1+0.2) {\small 4};
			\node at (10+0.2,1+0.2) {\tiny 2};
			\node at (11+0.2,1+0.2) {\small 4};
			\node at (12+0.2,1+0.2) {\tiny 2};
			\node at (13+0.2,1+0.2) {\small 4};
			\node at (14+0.2,1+0.2) {\tiny 2};
			\node at (15+0.2,1+0.2) {\small 4};
			\node at (16+0.2,1+0.2) {\tiny 2};
			\node at (17+0.2,1+0.2) {\small 4};
			\node at (18+0.2,1+0.2) {\tiny 1};

			\node at (0+0.2,2+0.2) {\tiny 1};
			\node at (1+0.2,2+0.2) {\tiny 1};
			\node at (2+0.4,2+0.2) {\tiny 1+1};
			\node at (3+0.2,2+0.2) {\tiny 1};
			\node at (4+0.2,2+0.2) {\tiny 2};
			\node at (5+0.2,2+0.2) {\tiny 1};
			\node at (6+0.2,2+0.2) {\tiny 2};
			\node at (7+0.2,2+0.2) {\tiny 1};
			\node at (8+0.2,2+0.2) {\tiny 2};
			\node at (9+0.2,2+0.2) {\tiny 1};
			\node at (10+0.2,2+0.2) {\tiny 2};
			\node at (11+0.2,2+0.2) {\tiny 1};
			\node at (12+0.2,2+0.2) {\tiny 2};
			\node at (13+0.2,2+0.2) {\tiny 1};
			\node at (14+0.2,2+0.2) {\tiny 2};
			\node at (15+0.2,2+0.2) {\tiny 1};
			\node at (16+0.2,2+0.2) {\tiny 2};
			\node at (17+0.2,2+0.2) {\tiny 1};
			\node at (18+0.2,2+0.2) {\tiny 1};

			\node at (9,-1) {Slopes of $U_{\ps_1}$ on $\oplus_i L(8,0)$};

			\draw[step=1cm,thick] (0,0) -- (0,2);
			
			\draw[step=1cm,thick] (0,0) -- (2,0);
			\foreach \x in {0,1,2}
			\foreach \y in {0,1,2,3,4,5,6,7,8,9,10,11,12,13,14,15,16,17,18}
			\draw [fill] (\y,\x) circle [radius=0.05];

			\end{tikzpicture}
		}

		\resizebox{\textwidth}{!}{
			\hspace{-0.5 cm}
			\begin{tikzpicture}
			
			\draw[step=1cm,very thin,dashed] (0,0) grid (18,4);	
			\draw[step=1cm,very thin] (0,0) grid (2,2);	
			\draw (1,-0.4) node {\small 1/2};
			\draw (2,-0.4) node {\small 1};
			\draw (3,-0.4) node {\small 3/2};
			\draw (4,-0.4) node {\small 2};
			\draw (5,-0.4) node {\small 5/2};
			\draw (6,-0.4) node {\small 3};
			\draw (7,-0.4) node {\small 7/2};
			\draw (8,-0.4) node {\small 4};
			\draw (9,-0.4) node {\small 9/2};
			\draw (10,-0.4) node {\small 5};
			\draw (11,-0.4) node {\small 11/2};
			\draw (12,-0.4) node {\small 6};
			\draw (13,-0.4) node {\small 13/2};
			\draw (14,-0.4) node {\small 7};
			\draw (15,-0.4) node {\small 15/2};
			\draw (16,-0.4) node {\small 8};
			\draw (17,-0.4) node {\small 17/2};
			\draw (18,-0.4) node {\small 9};

			\draw (-0.4,3) node {\small 3/2};
			\draw (-0.4,4) node {\small 2};
			\draw (-0.4,1) node {\small 1/2};
			\draw (-0.4,2) node {\small 1};
			
			\draw (-0.2,0-0.2) node {\small 0};
			
			\node at (0+0.2,0+0.2) {\tiny 1};
			\node at (1+0.2,0+0.2) {\tiny 1};
			\node at (2+0.3,0+0.2) {\tiny 1+1};
			\node at (3+0.2,0+0.2) {\tiny 1};
			\node at (4+0.2,0+0.2) {\tiny 2};
			\node at (5+0.2,0+0.2) {\tiny 1};
			\node at (6+0.2,0+0.2) {\tiny 2};
			\node at (7+0.2,0+0.2) {\tiny 1};
			\node at (8+0.2,0+0.2) {\tiny 2};
			\node at (9+0.2,0+0.2) {\tiny 1};
			\node at (10+0.2,0+0.2) {\tiny 2};
			\node at (11+0.2,0+0.2) {\tiny 1};
			\node at (12+0.2,0+0.2) {\tiny 2};
			\node at (13+0.2,0+0.2) {\tiny 1};
			\node at (14+0.2,0+0.2) {\tiny 2};
			\node at (15+0.2,0+0.2) {\tiny 1};
			\node at (16+0.2,0+0.2) {\tiny 2};
			\node at (17+0.2,0+0.2) {\tiny 1};
			\node at (18+0.2,0+0.2) {\tiny 1};
			
			\node at (0+0.2,4+0.2) {\tiny 1};
			\node at (1+0.2,4+0.2) {\tiny 1};
			\node at (2+0.2,4+0.2) {\tiny 2};
			\node at (3+0.2,4+0.2) {\tiny 1};
			\node at (4+0.2,4+0.2) {\tiny 2};
			\node at (5+0.2,4+0.2) {\tiny 1};
			\node at (6+0.2,4+0.2) {\tiny 2};
			\node at (7+0.2,4+0.2) {\tiny 1};
			\node at (8+0.2,4+0.2) {\tiny 2};
			\node at (9+0.2,4+0.2) {\tiny 1};
			\node at (10+0.2,4+0.2) {\tiny 2};
			\node at (11+0.2,4+0.2) {\tiny 1};
			\node at (12+0.2,4+0.2) {\tiny 2};
			\node at (13+0.2,4+0.2) {\tiny 1};
			\node at (14+0.2,4+0.2) {\tiny 2};
			\node at (15+0.2,4+0.2) {\tiny 1};
			\node at (16+0.2,4+0.2) {\tiny 2};
			\node at (17+0.2,4+0.2) {\tiny 1};
			\node at (18+0.2,4+0.2) {\tiny 1};

			\node at (0+0.2,1+0.2) {\tiny 1};
			\node at (1+0.2,1+0.2) {\small 4};
			\node at (2+0.3,1+0.2) {\tiny 1+1};
			\node at (3+0.2,1+0.2) {\small 4};
			\node at (4+0.2,1+0.2) {\tiny 2};
			\node at (5+0.2,1+0.2) {\small 4};
			\node at (6+0.2,1+0.2) {\tiny 2};
			\node at (7+0.2,1+0.2) {\small 4};
			\node at (8+0.2,1+0.2) {\tiny 2};
			\node at (9+0.2,1+0.2) {\small 4};
			\node at (10+0.2,1+0.2) {\tiny 2};
			\node at (11+0.2,1+0.2) {\small 4};
			\node at (12+0.2,1+0.2) {\tiny 2};
			\node at (13+0.2,1+0.2) {\small 4};
			\node at (14+0.2,1+0.2) {\tiny 2};
			\node at (15+0.2,1+0.2) {\small 4};
			\node at (16+0.2,1+0.2) {\tiny 2};
			\node at (17+0.2,1+0.2) {\small 4};
			\node at (18+0.2,1+0.2) {\tiny 1};

			\node at (0+0.3,2+0.2) {\tiny 1+1};
			\node at (1+0.3,2+0.2) {\tiny 1+1};
			\node at (2+0.4,2+0.2) {\tiny 1+1+1+1};
			\node at (3+0.2,2+0.2) {\tiny 2};
			\node at (4+0.2,2+0.2) {\small 4};
			\node at (5+0.2,2+0.2) {\tiny 2};
			\node at (6+0.2,2+0.2)  {\small 4};
			\node at (7+0.2,2+0.2) {\tiny 2};
			\node at (8+0.2,2+0.2)  {\small 4};
			\node at (9+0.2,2+0.2) {\tiny 2};
			\node at (10+0.2,2+0.2)  {\small 4};
			\node at (11+0.2,2+0.2) {\tiny 2};
			\node at (12+0.2,2+0.2)  {\small 4};
			\node at (13+0.2,2+0.2) {\tiny 2};
			\node at (14+0.2,2+0.2)  {\small 4};
			\node at (15+0.2,2+0.2) {\tiny 2};
			\node at (16+0.2,2+0.2)  {\small 4};
			\node at (17+0.2,2+0.2) {\tiny 2};
			\node at (18+0.2,2+0.2) {\tiny 2};
			
			\node at (0+0.2,3+0.2) {\tiny 1};
			\node at (1+0.2,3+0.2) {\small 4};
			\node at (2+0.2,3+0.2) {\tiny 2};
			\node at (3+0.2,3+0.2) {\small 4};
			\node at (4+0.2,3+0.2) {\tiny 2};
			\node at (5+0.2,3+0.2) {\small 4};
			\node at (6+0.2,3+0.2) {\tiny 2};
			\node at (7+0.2,3+0.2) {\small 4};
			\node at (8+0.2,3+0.2) {\tiny 2};
			\node at (9+0.2,3+0.2) {\small 4};
			\node at (10+0.2,3+0.2) {\tiny 2};
			\node at (11+0.2,3+0.2) {\small 4};
			\node at (12+0.2,3+0.2) {\tiny 2};
			\node at (13+0.2,3+0.2) {\small 4};
			\node at (14+0.2,3+0.2) {\tiny 2};
			\node at (15+0.2,3+0.2) {\small 4};
			\node at (16+0.2,3+0.2) {\tiny 2};
			\node at (17+0.2,3+0.2){\small 4};
			\node at (18+0.2,3+0.2) {\tiny 1};

			\node at (9,-1) {Slopes of $U_{\ps_1}$ on $\oplus_i L(8,1)$ };

			\draw[step=1cm,thick] (0,0) -- (0,2);
			
			\draw[step=1cm,thick] (0,0) -- (2,0);
			\foreach \x in {0,1,2,3,4}
			\foreach \y in {0,1,2,3,4,5,6,7,8,9,10,11,12,13,14,15,16,17,18}
			\draw [fill] (\y,\x) circle [radius=0.05];
			
			\end{tikzpicture}
		}
		In the above figures, the classical slopes are shown as a grid with solid lines and the overconvergent slopes are dashed.
		
		\begin{subsection}{\textbf{ Inert case}}	We now move to the inert case. For this we set $F=\QQ(\sqrt{5})$ and $p=2$. We will compute the slopes of $U_2$ acting on Hilbert modular forms of level $\K_0(2^3\gothp_{11})$ where $\gothp_{11}$ is the prime lying above $11$ generated by $(11,3+2\sqrt{5})$ and weight  $[k_1,k_2]\psi_r$ where $\psi_r: \OO_p^{\times} \to \{\pm 1\}$ is the primitive Hecke character of conductor $2^3$. In particular, it is such that $\psi_r(e)=\Norm(e)^r$ where $e \in \OO_F^\times$ is the fundamental unit (embedded in the usual way into $\OO_p^\times$) is the fundamental unit.  Lastly, Let $\tau$ be as in \ref{tei}. Note that in this case $h=16$ and therefore the space of classical Hilbert modular forms of weight $[k_1,k_2]\psi_r$  and level $\K_0(2^3\gothp_{11})$ (which can be checked to be sufficiently small) has dimension $(k_1-1)\cdot (k_2-1)16$ for $[k_1,k_2] \neq [2,2]$. In weight $[2,2]\psi_2$ the dimension is $15$, but with our convention we include the reduced norm forms, so we get a 16-dimensional space. 
			
			\begin{num}
				Note that in this case $L=\QQ_2(\sqrt{5})$ is the degree $2$ unramified extension of $\QQ_2$. One then checks that the torsion subgroup of the units is cyclic of order $6$ given by the $6$-th roots of unity. Therefore, a locally algebraic weight $[k_1,k_2]\psi_r$  induces a map on the $6$-th roots of unity, and this map determines in what component of  weight space the weight lives. Now looking at the explicit description of the weight, we see that $$[k_1,k_2]\psi_r : \zeta_6 \mapsto \zeta_6^{k_1-k_2}\psi_r(\zeta_6).$$ From which it follows that for a fixed character $\psi_r$, the locally algebraic weights given by $[k_1,k_2]\psi_r$ and $[k_1',k_2']\psi_r$ will live on the same component of  weight space if and only if $k_1-k_2 \equiv k_1'-k_2' \mod 6$. Moreover, we can switch between the different components of weight space by twisting by $\tau$. 
			\end{num}
			
			\begin{longtabu}{|c |X|}
				\Xhline{2 pt} 	
				Weight  & Classical Slopes \\  \Xhline{2 pt}
				
				$[2,2]\psi_2$ &   
				
				$(2/3, 6), (1, 4), (4/3, 6)$
				\\ \hline

				$[2,2]\psi_2 \tau^2$ &   
				
				$	(1/2, 4), (1, 8), (3/2, 4)$
				\\ \hline

				$[2,4]\psi_2$ &   
				
				$(1/2, 4),
				(1, 8),
				(3/2, 4),
				(5/3, 6),
				(2, 4),
				(7/3, 6),
				(5/2, 4),
				(3, 8),
				(7/2, 4)$
				\\ \hline 	
				
				$[2,6]\psi_2$ &   
				
				$ (  1/2 , 4 ) $,
				$ (  1 , 8 ) $,
				$ (  3/2 , 8 ) $,
				$ (  2 , 8 ) $,
				$ (  5/2 , 4 ) $,
				$ (  8/3 , 6 ) $,
				$ (  3 , 4 ) $,
				$ (  10/3 , 6 ) $,
				$ (  7/2 , 4 ) $,
				$ (  4 , 8 ) $,
				$ (  9/2 , 8 ) $,
				$ (  5 , 8 ) $,
				$ (  11/2 , 4 ) $
				\\ \hline

				$[2,8]\psi_2$ &   
				$ (  2/3 , 6 ) $,
				$ (  1 , 4 ) $,
				$ (  4/3 , 6 ) $,
				$ (  3/2 , 4 ) $,
				$ (  2 , 8 ) $,
				$ (  5/2 , 8 ) $,
				$ (  3 , 8 ) $,
				$ (  7/2 , 4 ) $,
				$ (  11/3 , 6 ) $,
				$ (  4 , 4 ) $,
				$ (  13/3 , 6 ) $,
				$ (  9/2 , 4 ) $,
				$ (  5 , 8 ) $,
				$ (  11/2 , 8 ) $,
				$ (  6 , 8 ) $,
				$ (  13/2 , 4 ) $,
				$ (  20/3 , 6 ) $,
				$ (  7 , 4 ) $,
				$ (  22/3 , 6 ) $
				\\ \hline

				$[4,4]\psi_2$ &   
				
				$ (  2/3 , 6 ) $,
				$ (  1 , 4 ) $,
				$ (  4/3 , 6 ) $,
				$ (  3/2 , 8 ) $,
				$ (  2 , 16 ) $,
				$ (  5/2 , 16 ) $,
				$ (  8/3 , 6 ) $,
				$ (  3 , 20 ) $,
				$ (  10/3 , 6 ) $,
				$ (  7/2 , 16 ) $,
				$ (  4 , 16 ) $,
				$ (  9/2 , 8 ) $,
				$ (  14/3 , 6 ) $,
				$ (  5 , 4 ) $,
				$ (  16/3 , 6 ) $
				\\ \hline

				$[3,3]\psi_1$ &   
				$ (  2/3 , 6 ) $,
				$ (  1 , 4 ) $,
				$ (  4/3 , 6 ) $,
				$ (  3/2 , 8 ) $,
				$ (  2 , 16 ) $,
				$ (  5/2 , 8 ) $,
				$ (  8/3 , 6 ) $,
				$ (  3 , 4 ) $,
				$ (  10/3 , 6 ) $
				\\ \hline

				$[3,5]\psi_2$ &   
				$ (  1/2 , 4 ) $,
				$ (  1 , 8 ) $,
				$ (  3/2 , 8 ) $,
				$ (  5/3 , 6 ) $,
				$ (  2 , 12 ) $,
				$ (  7/3 , 6 ) $,
				$ (  5/2 , 12 ) $,
				$ (  3 , 16 ) $,
				$ (  7/2 , 12 ) $,
				$ (  11/3 , 6 ) $,
				$ (  4 , 12 ) $,
				$ (  13/3 , 6 ) $,
				$ (  9/2 , 8 ) $,
				$ (  5 , 8 ) $,
				$ (  11/2 , 4 ) $
				\\ \Xhline{2 pt} 
				\caption{Inert case, $p=2$, classical slopes.}	 		   		
			\end{longtabu}
			
			We now compute some overconvergent slopes, extending our previous computations. As in the split case, the computations suggest that as long as the weights are in the same component of  weight space, they have the same multiset of slopes. In the table below we let component $1$ consists  of the weights $[k_1,k_2] \psi_r$ appearing the table of classical slopes for which $k_1 \equiv k_2 \mod 6$, and component $2$ consist of the remaining weights.
			
			\begin{longtabu}{|c| c| X| }
				\Xhline{2 pt}
				Component & Matrix & Overconvergent Slopes \\ \Xhline{2 pt}				
				1 & $U_p(20 \cdot 16)$ &   		
				$ (  2/3 , 6 ) $,
				$ (  1 , 4 ) $,
				$ (  4/3 , 6 ) $,
				$ (  3/2 , 8 ) $,
				$ (  2 , 16 ) $,
				$ (  5/2 , 16 ) $,
				$ (  8/3 , 6 ) $,
				$ (  3 , 20 ) $,
				$ (  10/3 , 6 ) $,
				$ (  7/2 , 16 ) $,
				$ (  11/3 , 12 ) $,
				$ (  4 , 24 ) $,
				$ (  13/3 , 12 ) $,
				$ (  9/2 , 24 ) $,
				$ (  14/3 , 6 ) $,
				$ (  5 , 36 ) $,
				$ (  16/3 , 6 ) $,
				$ (  11/2 , 28 ) $,
				$ (  17/3 , 12 ) $,
				$ (  6 , 32 ) $,
				$ (  19/3 , 12 ) $,
				$ (  13/2 , 12 ) $	
				\\ \hline 
				1 & $U_p(30 \cdot 16)$ &   		
				
				$ (  2/3 , 6 ) $,
				$ (  1 , 4 ) $,
				$ (  4/3 , 6 ) $,
				$ (  3/2 , 8 ) $,
				$ (  2 , 16 ) $,
				$ (  5/2 , 16 ) $,
				$ (  8/3 , 6 ) $,
				$ (  3 , 20 ) $,
				$ (  10/3 , 6 ) $,
				$ (  7/2 , 16 ) $,
				$ (  11/3 , 12 ) $,
				$ (  4 , 24 ) $,
				$ (  13/3 , 12 ) $,
				$ (  9/2 , 24 ) $,
				$ (  14/3 , 6 ) $,
				$ (  5 , 36 ) $,
				$ (  16/3 , 6 ) $,
				$ (  11/2 , 32 ) $,
				$ (  17/3 , 12 ) $,
				$ (  6 , 40 ) $,
				$ (  19/3 , 12 ) $,
				$ (  13/2 , 32 ) $,
				$ (  20/3 , 18 ) $,
				$ (  7 , 44 ) $,
				$ (  22/3 , 18 ) $,
				$ (  15/2 , 24 ) $,
				$ (  8 , 16 ) $,
				$ (  17/2 , 8 ) $	
				\\ \hline 			
				2 & $U_p(20 \cdot 16)$  &   					
				$ (  1/2 , 4 ) $,
				$ (  1 , 8 ) $,
				$ (  3/2 , 8 ) $,
				$ (  5/3 , 6 ) $,
				$ (  2 , 12 ) $,
				$ (  7/3 , 6 ) $,
				$ (  5/2 , 12 ) $,
				$ (  8/3 , 6 ) $,
				$ (  3 , 20 ) $,
				$ (  10/3 , 6 ) $,
				$ (  7/2 , 20 ) $,
				$ (  11/3 , 6 ) $,
				$ (  4 , 28 ) $,
				$ (  13/3 , 6 ) $,
				$ (  9/2 , 24 ) $,
				$ (  14/3 , 12 ) $,
				$ (  5 , 32 ) $,
				$ (  16/3 , 12 ) $,
				$ (  11/2 , 24 ) $,
				$ (  17/3 , 12 ) $,
				$ (  6 , 32 ) $,
				$ (  19/3 , 12 ) $,
				$ (  13/2 , 12 ) $
				\\ \hline 								
				2 & $U_p(30 \cdot 16)$  &   					
				$ (  1/2 , 4 ) $,
				$ (  1 , 8 ) $,
				$ (  3/2 , 8 ) $,
				$ (  5/3 , 6 ) $,
				$ (  2 , 12 ) $,
				$ (  7/3 , 6 ) $,
				$ (  5/2 , 12 ) $,
				$ (  8/3 , 6 ) $,
				$ (  3 , 20 ) $,
				$ (  10/3 , 6 ) $,
				$ (  7/2 , 20 ) $,
				$ (  11/3 , 6 ) $,
				$ (  4 , 28 ) $,
				$ (  13/3 , 6 ) $,
				$ (  9/2 , 24 ) $,
				$ (  14/3 , 12 ) $,
				$ (  5 , 32 ) $,
				$ (  16/3 , 12 ) $,
				$ (  11/2 , 28 ) $,
				$ (  17/3 , 12 ) $,
				$ (  6 , 40 ) $,
				$ (  19/3 , 12 ) $,
				$ (  13/2 , 36 ) $,
				$ (  20/3 , 12 ) $,
				$ (  7 , 48 ) $,
				$ (  22/3 , 12 ) $,
				$ (  15/2 , 24 ) $,
				$ (  23/3 , 6 ) $,
				$ (  8 , 12 ) $,
				$ (  25/3 , 6 ) $,
				$ (  17/2 , 4 ) $					
				\\ \Xhline{2 pt}
				\caption{Inert case, $p=2$, overconvergent slopes.}	 			
			\end{longtabu}
			
			\begin{obs}\label{rmk74}
				\begin{enumerate}[$(1)$]	
					\item 	Here we see that the slopes are {\it not} appearing as a finite union of arithmetic progressions, in contrast to what one sees over $\QQ$, but we will see later that the slopes have a similar overall structure. Also we have this phenomenon of increasing multiplicities as in the split case. 
					
					\item In the above example, the different components of  weight space are identified by the Galois orbits of the characters. Note that we can move between the components by twisting by the character $\tau$ as we did in the weight $[2,2]$ case.
					
					\item Again one can see the symmetries in the slopes arising from the Atkin-Lehner involution.
				\end{enumerate}
			\end{obs}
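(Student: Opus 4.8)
Observation \ref{rmk74} records three features of the slope data in the two inert-case tables above, and the plan is to justify each in turn, recycling the arguments already used for the split case in Observation \ref{obsplit} and for the identification of components in the discussion preceding the table of classical slopes. For $(1)$ the plan rests on one elementary point: a finite union of $k$ arithmetic progressions $\{a_i + n d_i : n \in \ZZ_{\geq 0}\}$, $i=1,\dots,k$, assigns to each rational number a multiplicity at most $k$, so any multiset of slopes in which some value occurs with arbitrarily large multiplicity cannot be of this form. One then reads off from the overconvergent-slope table that, within a single component of $\W$, the multiplicities increase monotonically along the slope axis and keep growing as the approximation size $R$ is increased, with no stabilisation at any fixed value; since, by Warning \ref{wow}, the stabilised entries are genuine slopes of overconvergent Hilbert modular forms, this shows that the scaled sequence $\bigl(s_i(\kappa)/\val_p(w(\kappa))\bigr)$ is \emph{not} a finite union of arithmetic progressions, in contrast with part $(b)$ of the conjecture over $\QQ$ recalled in the introduction. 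The clause ``similar overall structure'' is a forward pointer to Conjecture \ref{con1} (and \ref{con2}), in which the slopes are instead described as $\bigcup_{t}\{B_{\k,r}(\ov t, V) + l(t)\}$; checking that the tabulated numbers are compatible with this shape is a finite computation, and the increasing-multiplicity phenomenon has the same origin as in the split case (Observation \ref{obsplit}$(2)$).

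For $(2)$ the plan is to invoke the computation made in the paragraphs above: over $L = \QQ_2(\sqrt 5)$ the torsion subgroup of $\OO_L^\times$ is the group of sixth roots of unity, and a locally algebraic weight $[k_1,k_2]\psi_r$ restricts on this group to the character $\zeta_6 \mapsto \zeta_6^{\,k_1-k_2}\psi_r(\zeta_6)$, which is precisely the datum determining the component $\W_\chi$ containing the weight. Hence $[k_1,k_2]\psi_r$ and $[k_1',k_2']\psi_r$ lie on the same component exactly when $k_1 - k_2 \equiv k_1' - k_2' \pmod 6$, i.e. when the attached characters agree, which is the statement that the components are indexed by the Galois orbits of these characters; and multiplying by the character $\tau$ of Notation \ref{tei} shifts this character of the sixth roots of unity, hence permutes the components, exactly as was used for weight $[2,2]$. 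So the argument extends verbatim.

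For $(3)$ the plan is to repeat the Atkin--Lehner argument of Observation \ref{obsplit}$(3)$ for $F = \QQ(\sqrt 5)$ and level $\K_0(2^3\gothp_{11})$: the involution $W_p$ carries a form of slope $\alpha$ in $S_{k_\psi}(\K_0(2^3\gothp_{11}))$ to one of slope $\val_p(\Norm(p)^{\,k_0 - 1 - v_p(k)}) - \alpha$ in $S_{k_{\psi^{-1}}}(\K_0(2^3\gothp_{11}))$, where $k_0 = \max(k_1,k_2)$, and since $\psi_r$ and $\psi_r^{-1}$ lie in a common Galois orbit the two weights carry the same multiset of slopes. Consequently the slopes in weight $[k_1,k_2]\psi_r$ pair up to sum to $\val_p(\Norm(p)^{\,k_0 - 1 - v_p(k)})$ (the term $v_p(k)$ coming from the normalisation of $U_p$ in Definition \ref{norma}), which is the symmetry visible in the table of classical slopes; the only inputs are the basic theory of Atkin--Lehner operators on Hilbert modular forms and this normalisation.

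The genuine obstacle lies in $(1)$: the paper provides a \emph{lower} bound for $\NP(U_p)$ (Proposition \ref{618}) but no upper bound, so ``the multiplicities are unbounded'' cannot be promoted to a theorem with the tools at hand; the assertion rests on the stabilised approximations, and the fully rigorous content of $(1)$ is the weaker — but still decisive — statement that the genuine slopes computed so far already fail to be a finite union of arithmetic progressions. Parts $(2)$ and $(3)$, by contrast, are routine once the preceding discussion is granted.
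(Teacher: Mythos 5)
The statement you are proving is an \emph{Observation}: the paper offers no proof of it, only pointers back to the component computation preceding the inert tables and to the Atkin--Lehner discussion in Observation \ref{obsplit}(3). Your treatment of parts (2) and (3) reproduces exactly those pointers — the restriction of $[k_1,k_2]\psi_r$ to the sixth roots of unity, twisting by $\tau$ of Notation \ref{tei}, and the pairing of slopes summing to $\val_p(\Norm(p)^{k_0-1-v_p(k)})$ (with $\val_p(\Norm(p))=2$ in the inert case, which is what makes the tabulated pairs work out) — so there is nothing to object to there.

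The problem is your justification of (1). You argue that the slopes fail to be a finite union of arithmetic progressions \emph{because} the multiplicities in the multiset are unbounded. That cannot be the intended content: in the split case the multiplicities grow in exactly the same way (Observation \ref{obsplit}(2)), and yet the paper asserts there that the slopes \emph{do} appear in arithmetic progression (Observation \ref{obsplit}(1)); your criterion would flatly contradict that assertion. The contrast the observation is recording in the inert case is in the slope \emph{values}: because the seed sets $B_\k(\ov t,V)$ genuinely depend on the class of $t$ in $(\ZZ/6\ZZ)^2$ (two different sets, $\{2/3,1,4/3\}$ and $\{1/2,1,3/2\}$, according to whether $t_1\equiv t_2 \bmod 6$), the list of values has irregular gaps near the start — $2/3$ and $4/3$ occur but $5/3$ and $7/3$ do not, while $8/3,10/3,11/3,\dots$ all occur — and it is this pattern, not the multiplicities, that breaks the finite-union-of-progressions picture familiar from $\QQ$ while still exhibiting the structure $\bigcup_t\{B_\k(\ov t,V)+l(t)\}$ of Conjectures \ref{con1} and \ref{splitgencon}. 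A secondary slip: your phrase that the multiplicities ``keep growing as $R$ is increased, with no stabilisation at any fixed value'' is backwards — by Warning \ref{wow}, for each fixed slope the multiplicity stabilises once $R$ is large enough (only the tail of the approximation is unreliable); what grows is the multiplicity as a function of the slope value. Finally, as you partly concede yourself, none of this is provable from the finite tables: part (1) is an empirical observation about the computed data and the paper treats it as such, so the honest formulation is a description of the observed pattern rather than a theorem-style deduction from unbounded multiplicities.
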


		\end{subsection}

		\subsection{\textbf{ Conjectural behaviour near the boundary}}
		Over $\QQ$, \cite{bergp} have given a conjectural recipe to generate all of the overconvergent slopes and if one looks at this recipe one sees that its only `ingredients' are classical slopes appearing in weight $2$ (with appropriate character) at each component of  weight space and the number of cusps. The analogous behaviour is present in our computations and in general, our computations suggest the following:
		
		\begin{nota}
		\added[id=c]{	Recall that we call a subset $B$ of $\QQ_{\geq 0} \times \ZZ_{\geq 1}$ a set $sm$-pairs. For $(s,m) \in B$, we call $s$ the slope and $m$ the multiplicity. For $B$ any set of $sm$-pairs, we let $\mathfrak S(B)=|\{\sum b : (a,b) \in B \}|$, in other words, $\mathfrak{S}(B)$ is the sum of the multiplicities of $B$.}
		
		\added[id=c]{Similarly, for a  $V \in \{U_p,U_{\ps}\}$ we defined $\S_{\k}(V)$ to be the set of $sm$-pairs $(s,m)$ where $s$ is the slope of an eigenvalue of $V$ (acting in weight $\k$) and $m$ is the multiplicity with which it appears. Lastly, let $\S_{\k}^{cl}(V)=\S_{\k} (V^{cl})$ where $V^{cl}$ denotes the restriction of $V$ to the classical subspace of Hilbert modular forms.} 
			
		\end{nota}
		
		\begin{con}\label{splitgencon}
			Let $U$ be a sufficiently small level and let $\k=[k_1,k_2]\psi$ be a locally algebraic weight near the boundary. Let $V \in \{U_p,U_{\gothp}\}$, \added[id=c]{then for each $t \in (\ZZ/T\ZZ)^2$ (with $T$ the order of\/ $\OO_{p,tors}^\times$) there exists a set $B_\k(t,V)$ of $sm$-pairs with $\mathfrak S(B_\k(t,V))=h$  (the class number of $(D,U)$)}c  which only depends on which component $\k$ lies in, such that  (after scaling the slopes by $\val_p(w(\k))$)
			
			$$\S_\k(V)=\bigcup_{t \in \ZZ_{\geq 0}^2} \left \{B_\k(\ov{t},V) + l(t) \right \}$$ where\added[id=h]{ $\ov{t}$ is the image of $t$ in $(\ZZ/T\ZZ)^2$ and $l(t)=t_1+t_2$ for $t=(t_1,t_2)$. Moreover, on classical subspaces} \[\S_\k^{cl}(V)=\bigcup_{ {\substack{t_1 \in \{0\dots,k_1-2\} \\ t_2 \in \{0,\dots,k_2-2\}}}} \left \{B_\k(\ov{t},V) + l(t) \right \}.\]

		\end{con}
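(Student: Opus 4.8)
Since the final statement is a conjecture, what follows is a proposed strategy rather than a proof.

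\textbf{Set-up.} The plan is to adapt the boundary (``halo'') analysis of Liu--Wan--Xiao and Wan--Xiao--Zhang \cite{eovb, slbd} to the explicit matrix for $U_p$ built in Section 3. Using Corollary \ref{cor714}, after the normalization of \ref{norma} the entries of $U_p$ acting on $\bigoplus_{i=1}^{h} L\langle \unn{X}\rangle$ lie in a power series ring over $\OO_L$ in the weight variables, and Corollary \ref{cor619} shows that, in the ordering $Bi$ of \ref{bidef}, the $(x,y)$-block is divisible by $p^{b(x)}$ with $b(x)=x_1+x_2$ for $Bi^{-1}(x)=(x_1,x_2)$. For $\val_p(w(\k))$ small the Newton polygon of $\det(1-X U_p)$ is therefore governed to leading order by the weight-adic valuations of these entries, and one would first show that this weight-$\k$ Newton polygon stabilizes as the character $\psi$ ramifies, to a polygon whose slopes, rescaled by $\val_p(w(\k))$, are the combinatorial object in the conjecture; the character $\psi$ and the residue class $t \bmod T$ should affect only which linear pieces occur, not the slopes themselves.

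\textbf{The product structure.} The shape $\bigcup_{t\in\ZZ_{\geq 0}^2}$ ought to come from the factorization $U_p=U_{\gothp_1}U_{\gothp_2}$ in the split case, and from the corresponding near-tensor decomposition $L\langle\unn{X}\rangle \cong L\langle X_1\rangle\,\widehat{\otimes}\,L\langle X_2\rangle$ in the inert case, available because $\Delta$ acts coordinatewise by Proposition \ref{prop:7.1.8}. Up to the bookkeeping of the class group and of the groups $\Gamma^i(U)$, each $U_{\gothp_i}$ is the tensor of a one-variable compact operator on $L\langle X_i\rangle$ with the identity in the other variable, and the ordering $Bi(a,b)=\binom{a+b+1}{2}+b$ is designed so that the ``level set'' $b(m)=a+b$ indexing a block is exactly the shift $l(t)=t_1+t_2$. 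I would therefore (i) prove the one-variable version for $U_{\gothp_i}$ restricted to the compact subspaces $\bigoplus_i L(\infty,m)$ and $\bigoplus_i L(n,\infty)$ of Subsection \ref{ops}, which is the case closest to $\QQ$ and should follow from a direct Newton polygon estimate using Corollary \ref{cor619} together with the classical input below, and then (ii) deduce the statement for $U_p$ by a ``convolution of Newton polygons'' argument, controlling the error from the failure of $\bigoplus_i L\langle\unn{X}\rangle$ to be literally a tensor product and from the cross terms in the binomial sums $C_{n_i}(\g_i,x_i,y_i)$.

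\textbf{Identifying the seeds and the classical statement.} For the seed sets $B_\k(\ov t,V)$ I would invoke classicality: overconvergent eigenforms of sufficiently small slope are classical, so the lowest pieces of $\S_\k(V)$ are read off from the classical spaces, and the twisting relation $[k_1,k_2]\psi \leftrightarrow [k_1',k_2']\psi$ modulo $T=|\OO_{p,tors}^\times|$ (as in the inert discussion above) reaches all residues $\ov t$ from weight $[2,2]\psi\tau^j$; the Atkin--Lehner symmetry of Observation \ref{obsplit}(3) and Observation \ref{rmk74}(3) forces $\mathfrak{S}(B_\k(\ov t,V))=h$ and pins down the seed. The ``moreover'' (classical-subspace) equality is then formal: by Lemma \ref{lem75} the classical subspace $S_\k(U)$ sits inside $\bigoplus_i L\langle\unn{X}\rangle$ as the span of the monomials $X_1^{a}X_2^{b}$ with $a\le k_1-2$ and $b\le k_2-2$, i.e. the index set $\{t_1\in\{0,\dots,k_1-2\},\, t_2\in\{0,\dots,k_2-2\}\}$, and it is $U_p$- and $U_{\gothp}$-stable, so restricting the ``convolution'' decomposition to it truncates the union to that index set. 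The main obstacle, and the reason the statement is only conjectural, is step (ii) together with the fact that $b(x)\sim\sqrt{2x}$ grows too slowly for the argument that worked over $\QQ$ (where the analogue of $b(x)$ grows linearly, cf. \ref{mult}, forcing finitely many arithmetic progressions). One needs a genuinely two-dimensional Newton polygon estimate tracking the valuations of $U_p$ simultaneously in the $X_1$- and $X_2$-directions and controlling all minors, not just the diagonal ones; the combinatorics is entangled with counting lattice points on the lines $t_1+t_2=\text{const}$, which is exactly what produces the non-arithmetic-progression behaviour of Observation \ref{rmk74}(1). An alternative is to write down a candidate ``ghost series'' in the style of Bergdall--Pollack \cite{bergp}, built from the weight-$[2,2]$ data and the class number $h$, compute its Newton polygon directly, and prove it agrees with that of $\det(1-X U_p)$; but that comparison is as hard as the direct estimate, and either way this is where essentially all the work lies.
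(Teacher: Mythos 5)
The statement you are asking about is left as a conjecture in the paper, so there is no proof to compare against. The author's support for it consists of the slope tables in the split and inert computations, the explicit candidate seed sets $B_\k(\ov{t},V)$ listed immediately after the conjecture, a toy block-diagonal heuristic (the remark following the conjecture, in which the slopes of an infinite matrix built from blocks $A, p^{l(1)}A, p^{l(2)}A,\dots$ are read off), and the refinement in Conjecture \ref{con2} identifying $B_{\k,r}(t,V)$ with classical slopes in the smallest classical weight of Definition \ref{minwei}; the only theorem actually proved in this direction is the much weaker Newton polygon lower bound of Proposition \ref{618}, obtained from the Hodge polygon via the divisibility of the $(x,y)$-block by $p^{b(x)}$ (Corollary \ref{cor619}). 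Your proposal is therefore not reconstructing an argument from the paper but outlining a possible attack, and as a programme it is sensible: it is essentially the Liu--Wan--Xiao/Wan--Xiao--Zhang boundary strategy applied to the explicit matrices of Section 3, and you correctly locate the central difficulty (a genuinely two-dimensional halo estimate, since $b(x)$ grows like $\sqrt{2x}$ rather than linearly), which is exactly why the paper stops at a conjecture.

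Several steps you present as routine are, however, not. The ``Moreover'' part is not formal: knowing the multiset $\S_\k^{cl}(V)$ together with the $U_p$-stability of the classical subspace (Lemma \ref{lem75} and the polynomial model) does not by itself attribute a set of $sm$-pairs to each individual monomial index $t$; that attribution is precisely the content of the conjecture, and the paper's evidence for it is numerical, supplemented by the dimension count $\dim S_\k(U)=h(k_1-1)(k_2-1)$ --- which is what makes $\mathfrak S(B_\k(\ov{t},V))=h$ plausible; the Atkin--Lehner involution gives a symmetry of the slopes, not the count $h$. Likewise $U_{\gothp_1}$ is not ``a one-variable compact operator tensored with the identity'': the $\gothp_2$-components of the matrices $(\g_\beta u_\beta)_p$ are nontrivial norm-one, non-compact operators in the $X_2$-variable, which is exactly why $U_{\gothp}$ fails to be compact on the full space and why the paper only restricts it to the subspaces $\oplus_i L(\infty,m)$ in \ref{ops}; your ``convolution of Newton polygons'' step would thus have to handle a genuinely twisted product rather than a small perturbation of a tensor product. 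In the inert case there is no factorization of $U_p$ at all, and the per-element factorization of the generating function in Proposition \ref{prop:7.1.8} does not survive the sum over $\a \in \OO_{\gothp}/\pi_{\gothp}$, so the product structure you invoke is heuristic there as well. None of this invalidates your outline as a strategy, but each of these points is an open gap rather than bookkeeping, and collectively they are the reason the statement remains a conjecture in the paper.
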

		
		\added[id=h]{Conjecture \ref{con1} is just a simple generalization to any totally real field.}
		\begin{rmrk}
			\added[id=h]{Let us give a simple example to illustrate the idea behind this conjecture. Let $A$ be a non-zero $n \times n$ matrix over a $p$-adic field $K$ and let $s_1,\dots,s_n$ denote it's slopes (counted with multiplicity). Now pick an increasing function from $l:\ZZ_{\geq 0} \to \ZZ_{\geq 0}$. If we then construct the infinite block diagonal matrix $U$, whose blocks are given by $A,p^{l(1)}A,p^{l(2)}A,\dots$ then not only is $U$ compact, but it's slopes will be given by $s_1,\dots,s_n,s_1+l(1),\dots,s_n+l(1),\dots$. The above conjecture is based on a generalization of this example to the case when we don't have a single matrix $A$ determining the slopes, but several and our function $l$ is more complicated.} 
		\end{rmrk}
		
		\begin{rmrk}
			Note that, if we identify the classical space $S_{\k}(U)$ with the subspace of $L \langle X,Y \rangle$ \added[id=h]{with basis $X^{t_1}Y^{t_2}$ for $t_i \in \{0,\dots,k_i-2\}$ then the above conjecture says that associated to each basis element $X^{t_1}Y^{t_2}$, we have a finite set of $sm$-pairs $B_\k(t,X)$, with $t=(t_1,t_2)$ such that if we want to compute the slopes of $U_p$ (or $U_{\gothp_i}$) we need only the finite set $B_\k(t,U_p) $ (or $B_\k(t,U_{\ps_i})$ ) for all $t_1,t_2$ appearing in the basis of $S_\k(U)$.} \added[id=h]{This would then also give a stronger control theorem for weights near the boundary (cf. \cite{tian})}. Moreover, the set $B_\k(t,X)$ only depend on $t_1,t_2 \mod T$ and on the component of  weight space in which $\k$ lies. 
			
		\end{rmrk}

		\begin{rmrk}\label{mult}
			The conjecture above also explains the fact that the multiplicities of the slopes increasing, suggesting  that this is due to the fact that for each $x \in \ZZ_{\geq 0}$ there are $x+1$ pairs $(x_1,x_2) \in \ZZ_{\geq 0}^2$ such that $Bi(x_1,x_2)=x$, which \added[id=h]{in practice means that we have several sets of $sm$-pairs which are  equal resulting in the increase in multiplicities.} 
		\end{rmrk}
		
		\begin{rmrk}
			We note that, as a consequence of our conjecture, the slopes of Hilbert modular forms need not be given by a union of finite arithmetic progressions as we saw in \ref{rmk74}. But, it still follows that the number of slopes less than or equal to a fixed constant $\a$ is independent of the weight for weights near the boundary. From this one can show easily that, in this case, the Hilbert eigenvariety over the boundary is given by a disjoint union of rigid spaces which are finite and flat the boundary of weight space (cf. \cite[Theorem 6.22]{slbd}).
		\end{rmrk}

		\subsubsection{\textbf{ Split case}}

		The computations suggest that Conjecture \ref{splitgencon} holds with the following data.
		\begin{enumerate}
			
			\item For $F=\QQ(\sqrt{13})$,  $U=U_0(9)$, $\k=[k_1,k_2]\psi_r$ (as before) and any $t \in \ZZ_{\geq 0}^2$
			\begin{align*}
			\B_\k(\ov{t},U_p)=& \left  \{(0, 1), (1/2, 2), (1, 6), (3/2, 2), (2, 1)\right \} \\
			B_k(\ov{t},U_{\ps_i}) = &\left \{ (  0 , 3 ), (  1/2 , 6 ), (  1 , 3 )\right \}
			\end{align*}

			\item For $F=\QQ(\sqrt{17})$,  $U=U_0(8)$, $\k=[k_1,k_2]\psi_r$ and any $t \in \ZZ_{\geq 0}^2$ 
			\begin{align*}
			B_\k(\ov{t},U_p)=& \{ (  0 , 1 ), (  1/2 , 4 ) ,
			(  1 , 14 ) , (  3/2 , 4 ), (  2 , 1 ) \}\\
			B_k(\ov{t},U_{\ps_i}) = &\{  (0, 4), (1/2, 16), (1, 4)\}
			\end{align*}

		\end{enumerate}

		\subsubsection{\textbf{ Inert case}}

		Our computations suggest that Conjecture \ref{splitgencon} holds in the case where $F=\QQ(\sqrt{5})$ and $U=U_0(8\gothp_{11})$. In this case we have
		\begin{enumerate}
			\item Let $\k_1$ be any locally algebraic weight ( near the boundary with finite part $\psi_r$) in component $1$ of weight space we have $$B_{\k_1}(\ov{t},U_p)= \begin{cases} \{(2/3, 6), (1, 4), (4/3, 6)\}, & \text{if } t_1 \equiv t_2 \mod 6,\\
			\{(1/2, 4), (1, 8), (3/2, 4)\}, & \text{else.}
			\end{cases}
			$$
			
			\item Let $\k_2$ a locally algebraic weight  near the boundary with finite part $\psi_r$) in component $2$
			$$B_{\k_2}(\ov{t},U_p)= \begin{cases} \{(1/2, 4), (1, 8), (3/2, 4)\}, & \text{if } t_1 \equiv t_2 \mod 6,\\
			\{(2/3, 6), (1, 4), (4/3, 6)\}, & \text{else}.
			\end{cases}
			$$
		\end{enumerate}

		What the above suggests is that in the examples computed above the matrices $B_\k(\ov{t},V)$ in our conjecture should be the same as the matrices of $U_p,U_{\gothp_i}$ acting on classical weight $[2,2]\psi_2 \tau^{t_1-t_2}$ where $\tau$ is the character defined in \ref{tei}. This is analogous to \cite[Theorem 3.10]{bergp}. \added[id=h]{Although we note that it is possible to have levels for which the space of classical Hilbert modular forms of $[2,2]\psi_2 \tau^{t_1-t_2}$ is empty, for this reason we make the following definition:}
		
		\begin{defn}\label{minwei}
			\added[id=h]{Let $U \in D_f^\times$ have wild level $\geq \pi^2$. We define $(k_{min},r_{min})$ to be the smallest classical algebraic weight (where we order the weights lexicographically) for which there exits a finite character $\chi$ of conductor $p^2$ such that the (classical) space of Hilbert modular forms of level $U$, weight $(k_{min},r_{min})$ and character $\chi$ is non-zero. Note that the conditions imply that the locally algebraic weight associated to $(k_{min},r_{min})$ and $\chi$ is near the boundary. We call this  locally algebraic weight the {\it smallest classical weight } and denote it by $\k_{min,\chi}$}
		\end{defn}
		
		\begin{rmrk}
			\added[id=h]{In all the cases we have computed above, the smallest classical weight has algebraic part $[2,2]$.} 
		\end{rmrk}

	\added[id=h]{	From the observations above, we make the following conjecture.}
		\begin{con}\label{con2}
			Let  $V \in \{U_p,U_{\ps}\}$.	The set $B_{\k,r}(t,V)= \S_{\k_{min,\chi\tau^t}}^{cl}(V)$   where  $\k_{min,\chi\tau^t}$ for $\chi$ is as in \ref{minwei}, $\tau$ the character as in \ref{tei}  and $t \in (\ZZ/T\ZZ)^g$, with $T$ as in \ref{splitgencon}.
		\end{con}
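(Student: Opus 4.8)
This is stated as a conjecture, so the following is a proposed strategy rather than a proof. The plan is to establish the combined assertion that, for $V \in \{U_p, U_\ps\}$ and $\k$ near the boundary with finite part $\chi$, the scaled slope multiset of $V$ equals $\bigcup_{t \in \ZZ_{\geq 0}^g} \{ \S_{\k_{min,\chi\tau^{\ov t}}}^{cl}(V) + l(t) \}$, which combines Conjecture \ref{splitgencon} with the identification asserted in Conjecture \ref{con2}. The starting point is the completely explicit description of the matrix of $V$ in the monomial basis $\unn X^{\unn l}$ ordered by $Bi$: by Corollary \ref{cor714} its $(x,y)$-entry in each $h \times h$ block is a sum of terms $\Omega_{\k_\psi,r}((\g_\beta u_\beta)_p,x,y)$, and by Corollary \ref{cor619} the $(x,y)$-block is divisible by $p^{b(x) + \sum_i g(n_i,x_i,y_i)(s-1)}$. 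First I would filter $\bigoplus_i L\langle \unn X\rangle$ by the value of $b(m)$ and pass to the associated graded operator $\mathrm{gr}\,V$: it is block-diagonal, with the $n$-th block (for $n = b(m)$) equal to $p^{n}$ times a matrix $A_n$ obtained by dividing the relevant entries by $p^{b(x)}$ and reducing modulo $p$.

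The heart of the matter is to identify $A_n$. Here one uses that the normalization of Definition \ref{norma} removes the $\det(\g_i)^{v_i}$ factors, so the only dependence of $\Omega$ on the algebraic part $(k,r)$ of the weight is through $\pi_i^{x_i} d_i^{n_i} a_i^{x_i} d_i^{-y_i} b_i^{y_i-x_i} C_{n_i}(\g_i,x_i,y_i)$; after extracting $p^{b(x)}$ and reducing mod $p$, the binomial sum $C_{n_i}$ collapses (its higher terms carry factors $\pi_i^{t(s-1)}$, killed since $s \gg 0$ near the boundary) and the surviving factor depends on $x_i, y_i$ only modulo the order $T$ of $\OO_{p,tors}^\times$, matching the $|_{[2,2]}$ action twisted by $\tau^{x_1-x_2}$ via Remark \ref{rk71} and the definition of $\tau$ in Notation \ref{tei}. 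Making this precise should identify $A_n$ with the matrix of $V$ on the classical space $S_{\k_{min,\chi\tau^{\ov t}}}$ up to a permutation of the basis, where $t$ is the pair with $Bi(t)$ in the $n$-th graded layer; this also explains the increasing multiplicities of Remark \ref{mult}, since there are $n+1$ pairs $(x_1,x_2)$ with $b$-value $n$, hence $n+1$ copies of $A_n$ up to twist.

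Granting this identification, the slope statement for $\mathrm{gr}\,V$ is exactly the toy computation recorded after Conjecture \ref{splitgencon}: a compact block-diagonal operator with blocks $p^{l(t)} A_{\ov t}$ has slope multiset $\bigcup_t \{ \S(A_{\ov t}) + l(t) \}$. It then remains to compare $\NP(V)$ with $\NP(\mathrm{gr}\,V)$. Proposition \ref{618} already gives one inequality, namely that $\NP(V)$ lies on or above the polygon built from the $p^{b(x)}$-divisibility, and the content is equality: the cross terms between different $b$-levels must produce no cancellation. Here I would bring in the Atkin--Lehner involutions $W_p$ and $W_\ps$ (Observation \ref{obsplit}(3), and the symmetry relations used in \ref{cps}), which force exact symmetries on the Newton polygon and on the multiplicities and, combined with the lower bound, should pin $\NP(V)$ onto $\NP(\mathrm{gr}\,V)$ in low degree; together with induction on $k_1,k_2$ this would also give the classical identity $\S_\k^{cl}(V) = \bigcup_{t_i \in [0,k_i-2]} \{ \S_{\k_{min,\chi\tau^{\ov t}}}^{cl}(V) + l(t)\}$.

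The main obstacle is precisely this no-cancellation step for the full overconvergent operator: the divisibility bound is only an inequality, and upgrading it to an equality is a nonvanishing statement for certain minors modulo the next power of $p$, which is delicate. Over $\QQ$ this is what the ghost-series construction of Bergdall--Pollack and the integral-model techniques of Liu--Wan--Xiao and Wan--Xiao--Zhang provide, and neither has an established analogue over a general totally real field; the combinatorics of the bijection $Bi$ and the presence of several $U_\ps$-directions are what keeps it open. A realistic intermediate target is to prove the conjecture first for the partial operators $U_\ps$ restricted to the subspaces $\bigoplus_i L(n,\infty)$ and $\bigoplus_i L(\infty,m)$ of \ref{ops}, where the problem is essentially one-variable, $\mathrm{gr}\,U_\ps$ is literally block-diagonal, and the Liu--Wan--Xiao argument should transfer with only bookkeeping changes, and then to recombine via $U_p = U_{\ps_1}U_{\ps_2}$ in the split case.
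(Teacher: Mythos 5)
This statement is a conjecture in the paper: the author offers no proof, only the computational evidence of Sections 4--5 and the Hodge-type lower bound of Proposition \ref{618}, so your proposal cannot be measured against an existing argument, only against its own internal coherence. As a strategy sketch it is reasonable in outline, but beyond the no-cancellation issue you already flag there are two concrete problems. First, your construction of the blocks $A_n$ by ``dividing by $p^{b(x)}$ and reducing modulo $p$'' cannot do the job you assign to it: a matrix over the residue field has no slopes, and the observed slopes (e.g.\ $1/2$, $2/3$, $4/3$) are fractional, so no mod-$p$ identification of $A_n$ with the matrix of $V$ in weight $\k_{min,\chi\tau^{t}}$ can determine $\S(A_n)$. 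To make the reduction step meaningful you would need an integral statement --- a congruence between the block of Corollary \ref{cor714} and the classical smallest-weight matrix modulo a power of $p$ strictly larger than all slopes of that matrix, uniformly in the block --- and nothing in Corollaries \ref{cor714}--\ref{cor619} gives this; the entries depend on $x_i,y_i$ through unit powers $a_i^{x_i}, d_i^{-y_i}, b_i^{y_i-x_i}$ whose behaviour modulo higher powers of $\pi_i$ is not periodic in $x_i,y_i$ mod $T$, so even the claimed periodicity is only a residue-field statement. Second, the appeal to Atkin--Lehner to upgrade the lower bound of Proposition \ref{618} to an equality of Newton polygons does not work as stated: $W_p$ and $W_{\ps_i}$ act on the finite-dimensional classical subspaces and give the symmetries recorded in Observation \ref{obsplit}(3), but there is no such involution on the full overconvergent space (the slope multiset there is unbounded and not symmetric), so it cannot pin $\NP(V)$ onto the polygon of your graded model; this is precisely the nonvanishing-of-minors problem that Liu--Wan--Xiao and Wan--Xiao--Zhang solve over $\QQ$ with integral models, and which has no analogue here yet --- indeed the paper's own observation that the slopes are not finite unions of arithmetic progressions in the inert case shows their argument cannot transfer with ``only bookkeeping changes.''

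Your suggested intermediate target --- the partial operators $U_{\ps}$ on the subspaces $\bigoplus_i L(n,\infty)$ and $\bigoplus_i L(\infty,m)$ of \ref{ops}, followed by recombination via $U_p=U_{\ps_1}U_{\ps_2}$ in the split case --- is a sensible place to start and is closer in spirit to the one-variable situation, but note that even granting the conjecture for each $U_{\ps_i}$ separately one does not recover $\S_{\k,r}(U_p)$: the slopes of a product of non-commuting-in-any-useful-sense compact restrictions do not decompose a priori into sums of partial slopes, which is exactly the content of the open questions about the multiplicity grids in \ref{cps}. So the proposal should be read as a plausible road map whose two essential steps (integral identification of the blocks and the no-cancellation/ramification-of-minors estimate) are each at least as hard as the conjecture itself.
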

	
	\added[id=c]{In words, this conjecture says that the slopes of $V$ can be completely determined by knowing the multiset of slopes of $V$ acting on a space of classical forms of some small predetermined weight.}

		\section{\textbf{Slopes in the centre}}\label{centrewtsp}
		
		To contrast with the computations of slopes near the boundary, we include some computations of slopes in the centre of  weight space. Here we see much less structure than near the boundary.
		
		We collect some computations of slopes for $F=\QQ(\sqrt{5})$, $p=3$ and for weights all in the same component of  weight space, which in this case means $k_1 \equiv k_2 \mod 8$. \added[id=h]{ Here again we normalize our slopes as in \ref{norma}.}
		
		\begin{nota}
			The $\dagger$ denotes overconvergent slopes. \added[id=h]{So $200^{\dagger}$ means that these are the $200$  slopes appearing in $U_p(200,\k)$ with the notation as in \ref{wow}}. The other dimensions are the dimension of the corresponding space of classical forms.
		\end{nota}
		\newpage
		\begin{longtabu}{|c| c| c| X| }
			\Xhline{2 pt} 		
			Weight& Level & Dimension & Slopes\\ \Xhline{2 pt} 
			
			$[2,2]$ & $U_0(\gothp_{11})$& $0$ & 
			\\ \hline  
			
			$[2,2]$ & $U_0(3\gothp_{11})$& $1$ &  {\fontfamily{lmr} \selectfont(0, 1), (2, 1)}
			\\ \hline  
			
			$[2,2]$ & $U_0(3\gothp_{11})$& $200^\dagger$&  {\fontfamily{lmr} \selectfont (0, 1), (1, 2), (2, 6), (3, 6), (4, 4), (9/2, 4), (5, 8), (11/2, 4), (6, 30), (13/2, 4), (7, 4), (8, 6), (17/2, 4), (9, 6), (10, 22), (11, 10), (23/2, 4), (12, 19), (13, 8), (40/3, 3), (27/2, 6), (14, 30), (29/2, 4), (15, 3), (16, 2)}
			\\ \Xhline{2 pt}

			$[4,4]$ & $U_0(\gothp_{11})$& $1$ & $(0,1)$
			\\ \hline  
			
			$[4,4]$ & $U_0(3\gothp_{11})$& $18$ & {\fontfamily{lmr} \selectfont (0, 1), (2, 16), (6, 1)}
			\\ \hline  
			
			$[4,4]$ & $U_0(3\gothp_{11})$& $200^\dagger$&  {\fontfamily{lmr} \selectfont(0, 1), (2, 16), (3, 2), (4, 6), (5, 4), (6, 24), (19/3, 6), (7, 10), (8, 20), (9, 6), (10, 16), (11, 20), (12, 28), (13, 16), (14, 12), (15, 7), (16, 3), (53/3, 3)}
			\\ \Xhline{2 pt}

			$[6,6]$ & $U_0(\gothp_{11})$& $5$ &  {\fontfamily{lmr} \selectfont (0,1),(1,2),(2,2)}
			\\ \hline  
			
			$[6,6]$ & $U_0(3\gothp_{11})$& $50$ &   {\fontfamily{lmr} \selectfont(0, 1), (1, 2), (2, 2), (4, 40), (8, 2), (9, 2), (10, 1)}
			\\ \hline  
			
			$[6,6]$ & $U_0(3\gothp_{11})$& $200^\dagger$&  {\fontfamily{lmr} \selectfont(0, 1), (1, 2), (2, 2), (4, 40), (5, 2), (6, 2), (13/2, 4), (7, 20), (8, 6), (17/2, 4), (9, 10), (19/2, 4), (10, 12), (21/2, 4), (11, 30), (23/2, 4), (12, 17), (25/2, 2), (13, 15), (14, 8), (15, 7), (31/2, 2), (16, 2)}
			\\ \Xhline{2 pt}

			$[14,6]$ & $U_0(\gothp_{11})$& $13$ &   {\fontfamily{lmr} \selectfont(0,1), (1,2), (2,2), (3,6), (4,2)}
			\\ \hline

			$[14,6]$ & $U_0(3\gothp_{11})$& $130$ & {\fontfamily{lmr} \selectfont (0, 1),
				(1, 2),
				(2, 2),
				(3, 6),
				(4, 2),
				(8, 104),
				(14, 2),
				(15, 6),
				(16, 2),
				(17, 2),
				(18, 1)}
			\\ \hline  
			
			$[14,6]$ & $U_0(3\gothp_{11})$& $200^\dagger$& {\fontfamily{lmr} \selectfont (0, 1),
				(1, 2),
				(2, 2),
				(3, 6),
				(4, 2),
				(5, 1),
				(6, 1),
				(13/2, 2),
				(7, 3),
				(15/2, 2),
				(8, 104),
				(9, 8),
				(10, 3),
				(21/2, 2),
				(11, 12),
				(23/2, 14),
				(47/4, 4),
				(12, 4),
				(25/2, 2),
				(13, 3),
				(14, 2),
				(15, 2),
				(16, 3),
				(33/2, 2),
				(17, 6),
				(18, 7)}
			\\ \Xhline{2 pt} 	
			\caption{Centre slopes for $p=3$.}
		\end{longtabu}

		\begin{obs}
			The first observation is that in this case, the slopes are not appearing as unions of arithmetic progressions. Moreover, there are many non-integer slopes, which is in contrast to many examples over $\QQ$.
		\end{obs}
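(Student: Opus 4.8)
The plan is to treat the statement as a direct report on the data in Table~\ref{centrewtsp} (the centre computations for $F=\QQ(\sqrt 5)$, $p=3$), so the ``proof'' amounts to extracting the relevant features from that table together with the computational framework of Warning~\ref{wow} and the normalisation of Definition~\ref{norma}. First I would recall the two guarantees already in place: the Hecke operators are normalised as in \ref{norma} (so no hidden rescaling intervenes), and for the finite matrices $U_p(R,\k)$ the smallest slopes below the threshold of Warning~\ref{wow} are genuine slopes of (overconvergent) Hilbert modular forms, while in the classical columns every listed slope is an honest classical slope. For the non-integrality assertion I would then simply exhibit witnesses from the table: among the approximated overconvergent slopes one finds $9/2,11/2,13/2$ in weight $[2,2]$ at level $U_0(3\gothp_{11})$; $19/3$ and $53/3$ in weight $[4,4]$; the half-integers $13/2,17/2,19/2,21/2,23/2,25/2,31/2$ in weight $[6,6]$; and $47/4$ in weight $[14,6]$. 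One notes in passing that the classical centre slopes in the same table are all integral, so the non-integrality is genuinely an overconvergent phenomenon.

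For the first assertion --- that the slopes do not form a union of arithmetic progressions --- the argument I would give is comparative. Near the boundary (Section~\ref{tab} and Conjecture~\ref{con1}, and over $\QQ$ by \cite{eovb,slbd}) the slope multiset, after scaling by $\val_p(w(\k))$, is of the shape $\bigcup_t \{B+l(t)\}$ for a fixed finite set $B$, hence lies in a finite union of arithmetic progressions of common difference $1$; moreover in the $\Gamma_0$-regular situation over $\QQ$ the centre slopes are integral. I would then point out that the centre data here has neither feature: the classical sequences display large, irregular plateaux --- e.g.\ in weight $[14,6]$ at level $U_0(3\gothp_{11})$ the classical slopes are $0,1,2,3,4$ followed immediately by the slope $8$ with multiplicity $104$ and then $14,15,16,17,18$, with the values $5,6,7$ and $9,\dots,13$ entirely absent from the classical space --- and the approximated overconvergent sequences interleave denominators $2$, $3$ and $4$ with no visible period. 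No finite union of arithmetic progressions sharing a single common difference can reproduce such a pattern, which is exactly the contrast claimed.

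The main obstacle --- and the reason this is recorded as an Observation rather than a Proposition --- is that ``union of arithmetic progressions'' and ``non-integrality of the slopes'' are properly asymptotic, global assertions, whereas the evidence is the truncation $U_p(R,\k)$, whose slopes are certified by Warning~\ref{wow} only up to the slowly growing bound $\lfloor b(R)/h\rfloor$; here one has $h=2$ and $R=200$, so only a short initial segment is provably correct, and that segment consists of integers. Thus what is genuinely established is (i) that the certified initial segments of these centre sequences are not initial segments of any small union of arithmetic progressions with uniform common difference matching the near-boundary pattern, and (ii) that the (believed genuine) approximated overconvergent slopes contain many non-integral rationals; the stronger global non-periodicity and non-integrality are extrapolations from this, consistent with the expectation voiced in the Introduction that the Hilbert eigenvariety is geometrically more complicated in the centre. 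I do not see a way to remove this caveat without a structural theorem for centre slopes, which is well beyond the present scope.
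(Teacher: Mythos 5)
Your reading is correct and matches the paper's: the Observation is an empirical remark justified solely by inspection of the preceding tables, and the witnesses you cite (the non-integral slopes such as $19/3$, $53/3$, $47/4$ and the various half-integers, against the all-integral classical columns) are exactly the data points that substantiate it. Your added caveat via Warning \ref{wow} --- that with $h=2$ and $R=200$ only a short, in fact integral, initial segment of the approximated overconvergent slopes is provably correct, so the global claims are extrapolations --- is a fair and slightly more careful framing than the paper itself offers, but it does not change the substance.
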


		In the table below we work in $\QQ(\sqrt{5})$, with $p=2$ and level $\K_0(2 \gothp_{11})$, where $\ps_{11} |11$.

		\begin{longtabu}{|c |c| c |X|}
			\Xhline{2 pt}
			Level &	Weight & Dimension & Slopes \\  \Xhline{2 pt}
			
			$\K_0(\gothq_{11})$&		$[2,2]$ & $0$  &   
			
			\\ \hline

			$\K_0(2 \gothq_{11})$&	$[2,2]$ & $1$  &   
			$(2, 1)$
			\\ \hline

			$\K_0(2 \gothq_{11})$&	$[2,2]$ & $200^\dagger$ &   
			
			$ (  2 , 5 ) $,
			$ (  3 , 2 ) $,
			$ (  4 , 10 ) $,
			$ (  6 , 3 ) $,
			$ (  7 , 4 ) $,
			$ (  8 , 33 ) $,
			$ (  10 , 4 ) $,
			$ (  11 , 2 ) $,
			$ (  12 , 3 ) $,
			$ (  25/2 , 4 ) $,
			$ (  13 , 12 ) $,
			$ (  14 , 10 ) $,
			$ (  44/3 , 6 ) $,
			$ (  15 , 8 ) $,
			$ (  46/3 , 6 ) $,
			$ (  31/2 , 8 ) $,
			$ (  16 , 40 ) $,
			$ (  49/3 , 3 ) $,
			$ (  33/2 , 4 ) $,
			$ (  50/3 , 3 ) $,
			$ (  17 , 14 ) $,
			$ (  35/2 , 2 ) $,
			$ (  18 , 7 ) $,
			$ (  19 , 3 ) $,
			$ (  20 , 2 ) $,
			$ (  41/2 , 2 ) $
			\\ \Xhline{2 pt} 
			
			$\K_0(\gothq_{11})$&			$[4,4]$ & $1$ &   
			$(2, 1)$	 	
			\\ \hline 		
			$\K_0(2 \gothq_{11})$&		$[4,4]$ & $9$ &   
			$(2, 8), (4, 1)$	 	
			\\ \hline

			$\K_0(2 \gothq_{11})$&	$[4,4]$ & $200^\dagger$  &   
			$ (  2 , 8 ) $,
			$ (  4 , 1 ) $,
			$ (  5 , 4 ) $,
			$ (  16/3 , 6 ) $,
			$ (  6 , 6 ) $,
			$ (  7 , 2 ) $,
			$ (  8 , 7 ) $,
			$ (  17/2 , 12 ) $,
			$ (  9 , 4 ) $,
			$ (  10 , 21 ) $,
			$ (  11 , 2 ) $,
			$ (  12 , 6 ) $,
			$ (  25/2 , 4 ) $,
			$ (  13 , 6 ) $,
			$ (  27/2 , 4 ) $,
			$ (  14 , 3 ) $,
			$ (  15 , 22 ) $,
			$ (  16 , 8 ) $,
			$ (  33/2 , 4 ) $,
			$ (  17 , 13 ) $,
			$ (  35/2 , 24 ) $,
			$ (  18 , 21 ) $,
			$ (  19 , 6 ) $,
			$ (  39/2 , 2 ) $,
			$ (  20 , 3 ) $,
			$ (  21 , 1 ) $
			\\ \Xhline{2 pt}

			$\K_0(\gothq_{11})$&		$[4,2]$ & $1$ &
			$(1, 1)$
			\\ \hline

			$\K_0(2 \gothq_{11})$&		$[4,2]$ & $3$ &
			$(1, 2), (3, 1)$
			\\ \hline

			$\K_0(2 \gothq_{11})$&	$[4,2]$ & $200^\dagger$ &
			$ (  1 , 2 ) $,
			$ (  2 , 1 ) $,
			$ (  5/2 , 2 ) $,
			$ (  3 , 1 ) $,
			$ (  4 , 5 ) $,
			$ (  14/3 , 3 ) $,
			$ (  5 , 5 ) $,
			$ (  6 , 1 ) $,
			$ (  13/2 , 2 ) $,
			$ (  7 , 5 ) $,
			$ (  8 , 11 ) $,
			$ (  9 , 24 ) $,
			$ (  19/2 , 2 ) $,
			$ (  10 , 1 ) $,
			$ (  11 , 9 ) $,
			$ (  12 , 4 ) $,
			$ (  13 , 2 ) $,
			$ (  14 , 19 ) $,
			$ (  29/2 , 2 ) $,
			$ (  15 , 7 ) $,
			$ (  46/3 , 3 ) $,
			$ (  16 , 12 ) $,
			$ (  49/3 , 6 ) $,
			$ (  33/2 , 16 ) $,
			$ (  50/3 , 3 ) $,
			$ (  67/4 , 4 ) $,
			$ (  17 , 21 ) $,
			$ (  35/2 , 6 ) $,
			$ (  18 , 9 ) $,
			$ (  37/2 , 4 ) $,
			$ (  19 , 5 ) $,
			$ (  20 , 1 ) $,
			$ (  21 , 2 ) $
			\\ \Xhline{2 pt}

			$\K_0(\gothq_{11})$&$[6,2]$ & $1$ & 
			$(1, 1)$
			\\ \hline

			$\K_0(2 \gothq_{11})$&$[6,2]$ & $5$ & 
			$(1, 1), (2, 3), (5, 1)$
			\\ \hline

			$\K_0(2 \gothq_{11})$&	$[6,2]$ & $200^\dagger$ & 
			$ (  1 , 1 ) $,
			$ (  2 , 3 ) $,
			$ (  3 , 2 ) $,
			$ (  10/3 , 3 ) $,
			$ (  4 , 1 ) $,
			$ (  9/2 , 4 ) $,
			$ (  5 , 4 ) $,
			$ (  17/3 , 3 ) $,
			$ (  6 , 2 ) $,
			$ (  7 , 4 ) $,
			$ (  15/2 , 4 ) $,
			$ (  8 , 2 ) $,
			$ (  17/2 , 2 ) $,
			$ (  9 , 17 ) $,
			$ (  19/2 , 2 ) $,
			$ (  10 , 12 ) $,
			$ (  21/2 , 2 ) $,
			$ (  11 , 6 ) $,
			$ (  12 , 12 ) $,
			$ (  13 , 5 ) $,
			$ (  14 , 2 ) $,
			$ (  15 , 26 ) $,
			$ (  31/2 , 2 ) $,
			$ (  16 , 7 ) $,
			$ (  33/2 , 2 ) $,
			$ (  17 , 18 ) $,
			$ (  35/2 , 16 ) $,
			$ (  53/3 , 6 ) $,
			$ (  18 , 17 ) $,
			$ (  19 , 7 ) $,
			$ (  39/2 , 2 ) $,
			$ (  20 , 4 ) $		
			\\\Xhline{2 pt}

			\caption{Centre slopes for $p=2$.}	
		\end{longtabu}

		\begin{rmrk}
			We can again see that in this case there is much less structure to the slopes. In particular, they do not appear to be unions of arithmetic progressions and their structure is not obviously different from the regular case. Moreover, if one make the naive extension of the definitions of $\Gamma_0$-regular and $\Gamma_0$-irregular as in \cite{qasl}, then in the above examples $p=3$ would be regular and $p=2$ would be regular, but there does not appear to be any difference in the structure of the slopes in these cases.
		\end{rmrk}

	\end{section}

	\bibliographystyle{alpha}
	
	\bibliography{bibliog}

\end{document}